\newtheorem{prop}{Proposition}[section]
\newtheorem{thm}[prop]{Theorem}
\newtheorem{lem}[prop]{Lemma}
\theoremstyle{definition}
\newtheorem{defn}[prop]{Definition}
\newtheorem{rem}[prop]{Remark}
\def\co{\colon\thinspace}
\newcommand{\tb}{\mathtt{tb}}
\newcommand{\lk}{\mathtt{lk}}
\newcommand{\slk}{\mathtt{sl}}
\newcommand{\vlk}{\underline{\lk}}
\newcommand{\rot}{\mathtt{rot}}
\newcommand{\vrot}{\underline{\rot}}
\newcommand{\ttt}{\mathtt{t}}
\newcommand{\ttr}{\mathtt{r}}
\newcommand{\rmd}{\mathrm{d}}
\newcommand{\rmi}{\mathrm{i}}
\newcommand{\SO}{\mathrm{SO}}
\newcommand{\bfi}{\mathbf{i}}
\newcommand{\bfj}{\mathbf{j}}
\newcommand{\bfk}{\mathbf{k}}
\newcommand{\bfx}{\mathbf{x}}
\newcommand{\C}{\mathbb{C}}
\newcommand{\N}{\mathbb{N}}
\newcommand{\R}{\mathbb{R}}
\newcommand{\Z}{\mathbb{Z}}
\newcommand{\LL}{\mathbb{L}}
\newcommand{\SL}{\mathrm{SL}}
\newcommand{\xist}{\xi_{\mathrm{st}}}
\DeclareMathOperator{\Int}{Int}
\begin{document}

\author[H.~Geiges]{Hansj\"org Geiges}
\address{Mathematisches Institut, Universit\"at zu K\"oln,
Weyertal 86--90, 50931 K\"oln, Germany}
\email{geiges@math.uni-koeln.de}

\author[S.~Onaran]{Sinem Onaran}
\address{Department of Mathematics, Hacettepe University,
06800 Beytepe-Ankara, Turkey}
\email{sonaran@hacettepe.edu.tr}

\thanks{H.~G.\ is partially supported by the SFB/TRR 191 `Symplectic Structures
in Geometry, Algebra and Dynamics', funded by the DFG; S.~O.\
is partially supported by a Turkish Academy of Sciences T\"UBA--GEBIP}

\title{Legendrian Hopf links}

\date{}

\begin{abstract}
We completely classify Legendrian realisations
of the Hopf link, up to coarse equivalence, in the $3$-sphere with
any contact structure.
\end{abstract}

\subjclass[2010]{57M25; 53D35, 57M27, 57R17}


\maketitle


\section{Introduction}
When we speak of a \emph{Hopf link} in this paper, we shall
always mean an ordered link $K_0\sqcup K_1$ in the
$3$-sphere~$S^3$, made up
of oriented unknots forming a \emph{positive} Hopf link,
that is, $K_1$ is isotopic in $S^3\setminus K_0$
to a positive meridian of~$K_0$.
Two Legendrian realisations $L_0\sqcup L_1$ and $L_0'\sqcup L_1'$
of this Hopf link in some contact structure $\xi$ on $S^3$
are called \emph{coarsely equivalent} if there is a contactomorphism
of $(S^3,\xi)$ that sends $L_0\sqcup L_1$ to $L_0'\sqcup L_1'$
as an ordered, oriented link.

The main result of this paper is the classification, up to coarse
equivalence, of all Legendrian realisations of the Hopf link in $S^3$
with any contact structure. 

For a brief introduction to the theory of Legendrian knots, i.e.\
knots tangent to a given contact structure, see \cite[Chapter~3]{geig08}
or the beautiful survey by Etnyre~\cite{etny05}. The latter
discusses the classification of Legendrian knots
and covers a wide range of applications of Legendrian knot theory
not only to contact geometry (e.g.\ surgery along Legendrian knots,
invariants of contact structures), but also to general topology
(e.g.\ plane curves, knot concordance, topological knot invariants).

Very little is known about the classification of Legendrian links
(with more than one component).
When Etnyre wrote his survey on Legendrian and transverse
knots in 2005, the results about  Legendrian links could be
summarised on two pages. Since then, only a small number of classification
statements on Legendrian links in the standard tight contact
structure $\xist$ on $S^3$ or in other tight contact $3$-manifolds
have been added to the literature, e.g.\ \cite{dige07,dige10}.
The present paper goes considerably beyond those
results, both concerning the range of Legendrian realisations
covered by the classification and the variety of methods used in the proof.
Our main theorem is the first complete Legendrian classification
of a topological link type that includes Legendrian realisations
in overtwisted contact structures.

Legendrian knots in overtwisted contact structures fall into
two classes, loose and exceptional. The latter can be divided
into two subclasses.

\begin{defn}
A Legendrian knot $L$ in an overtwisted contact $3$-manifold $(M,\xi)$
is called \emph{exceptional} if its complement
$(M\setminus L,\xi|_{M\setminus L})$ is tight; the knot
is called \emph{loose} if the contact structure is still 
overtwisted when restricted to the knot complement.

An exceptional Legendrian knot is called \emph{strongly exceptional}
if the knot complement has zero Giroux torsion.
\end{defn}

The notion of \emph{Giroux torsion} and the related concept
of \emph{twisting} will be explained below. Previous classification
results for exceptional Legendrian knots such as
\cite{geon} confined attention to \emph{strongly} exceptional realisations.
One of the reasons for this restriction is that classification
results for tight contact structures on the relevant knot complement
typically tend to require, as in~\cite{dlz13},
that the Giroux torsion be zero. One of the significant
features of our classification of Legendrian Hopf
links, by contrast, is the fact that it
includes Legendrian realisations where the link complement
may contain torsion without being overtwisted.

Here is our main result. As mentioned before, $\xist$ denotes the
standard tight contact structure on~$S^3$. Besides this standard structure,
there is a countable family of overtwisted contact structures
$\bigl\{\xi_d\co d\in\Z+\frac{1}{2}\bigr\}$, as we shall recall below.
For the definition of the classical invariants $\tb$ (Thurston--Bennequin
invariant) and $\rot$ (rotation number) of Legendrian knots, as well as
other fundamentals of contact topology, the reader may refer
to~\cite{geig08}. Our natural numbers $\N$ are the positive integers;
$\N_0$ includes zero.

\begin{thm}
\label{thm:main}
Up to coarse equivalence, the Legendrian realisations of the
Hopf link are as follows. In all cases, the classical invariants
determine the Legendrian realisation.
\begin{itemize}
\item[(a)] In $(S^3,\xist)$ there is a unique realisation for any
combination of classical invariants $(\tb,\rot)=(\ttt_i,\ttr_i)$,
$i=0,1$, in the range $\ttt_0,\ttt_1<0$ and
\[ \ttr_i\in\{\ttt_i+1,\ttt_i+3,\ldots,-\ttt_i-3,-\ttt_i-1\}.\]
For fixed values of $\ttt_0,\ttt_1<0$ this gives a total
of $\ttt_0\ttt_1$ realisations.

\item[(b)] For $\ttt_0<0$ and $\ttt_1>0$ the strongly exceptional
realisations are as follows.
\begin{itemize}
\item[(b1)] In $(S^3,\xi_{1/2})$ there are realisations $L_0\sqcup L_1$
made up of an exceptional Legendrian unknot $L_1$ with invariants
$(\ttt_1,\ttr_1)=(n+2,\pm(n+1))$, where $n\in\N_0$, and a loose
Legendrian unknot~$L_0$ whose Thurston--Bennequin invariant $\ttt_0$
can be any negative number, and $\ttr_0$ lies in the range
\[ \{\ttt_0,\ttt_0+2,\ldots,\-\ttt_0-2,-\ttt_0\}.\]
For a given $\ttt_0<0$, this gives us $2|\ttt_0-1|$
realisations.

\item[(b2)] In $(S^3,\xi_{1/2})$ there are realisations $L_0\sqcup L_1$
consisting of the exceptional Legendrian unknot $L_1$ with invariants
$(\ttt_1,\ttr_1)=(1,0)$ and a loose Legendrian unknot~$L_0$,
where $\ttt_0$ can be any negative number, and $\ttr_0$ lies in the range
\[ \{\ttt_0-1,\ttt_0+1,\ldots,\-\ttt_0-1,-\ttt_0+1\}.\]
For a given $\ttt_0<0$, these are $|\ttt_0-2|$ realisations.
\end{itemize}

\item[(c)] For $\ttt_0,\ttt_1>0$ the strongly exceptional
realisations are as follows.

\begin{itemize}
\item[(c1)] There is a unique realisation in $(S^3,\xi_{1/2})$
with $(\ttt_i,\ttr_i)=(1,0)$, $i=0,1$. Both $L_0$ and $L_1$
are exceptional.
\item[(c2)] There is a pair of realisations with $(\ttt_0,\ttr_0)=(2,\pm 3)$
and $(\ttt_1,\ttr_1)=(1,\pm 2)$ in $(S^3,\xi_{-1/2})$.

There are three realisations with $\ttt_0=3$ and $\ttt_1=1$. Two
of them with $(\ttt_0,\ttr_0)=(3,\pm 4)$ and $(\ttt_1,\ttr_1)=(1,\pm 2)$
live in $(S^3,\xi_{-1/2})$; the third one with $(\ttt_0,\ttr_0)=(3,0)$
and $(\ttt_1,\ttr_1)=(1,0)$ can be found in $(S^3,\xi_{3/2})$.

There are four realisations with $\ttt_0=\ttt_1=2$: two
with $\ttr_1=\ttr_2=\pm 3$ in  $(S^3,\xi_{-1/2})$,
two with $\ttr_0=\ttr_1=\pm 1$ in $(S^3,\xi_{3/2})$.

In all cases, the individual link components are loose.
\item[(c3)] For any $\ttt_0\geq 4$ and $t_1=1$ there are four links
realising these values of the Thurston--Bennequin invariants.
For $\ttt_0\geq 3$ and $\ttt_1=2$ there are six realisations.
The remaining invariants are listed in Table~\ref{table:se-invariants}
in Section~\ref{section:compute}. The link components are loose.

\item[(c4)] For each choice of $\ttt_0,\ttt_1\geq 3$ there are 
eight realisations. In all cases both link components loose.
The invariants are listed in Table~\ref{table:se-invariants}.
\end{itemize}

\item[(d)] For $\ttt_0=0$, there are two exceptional realisations
with $\ttt_1=m$ for each $m\in\Z$. The rotation numbers are
$\ttr_0=\pm 1$ and $\ttr_1=\pm (m-1)$. The unknot $L_0$ is always loose;
$L_1$ is loose for $m\leq 0$, exceptional for $m\geq 1$.

\item[(e)] For each choice of integers $(\ttt_0,\ttt_1)\neq (\pm 1,\pm 1)$
and natural number $p$ there is exactly a pair of exceptional Legendrian Hopf
links $L_0\sqcup L_1$, distinguished by the rotation numbers,
with $\tb(L_i)=\ttt_i$ and with $\pi$-twisting
in the link complement equal to~$p$.
For $\ttt_0=\ttt_1=\pm 1$, there is a unique realisation. The ambient
contact structure is $\xi_{1/2}$ or~$\xi_{-1/2}$.

\item[(f)] For any choice of $\ttt_0,\ttr_0,\ttt_1,\ttr_1\in\Z$
with $\ttt_i+\ttr_i$ odd,
and for any $d\in\Z+\frac{1}{2}$, there is a unique loose Hopf link
$L_0\sqcup L_1$ in $(S^3,\xi_d)$ with invariants $\tb(L_i)=\ttt_i$ and
$\rot(L_i)=\ttr_i$.
\end{itemize}
\end{thm}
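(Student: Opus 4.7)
My plan is to separate the proof into existence and uniqueness, and then stratify by the ambient contact structure and by whether the link is loose, strongly exceptional, or has $\pi$-twisting in its complement. The unifying geometric observation is that the complement of any Legendrian Hopf link in $S^{3}$ is diffeomorphic to $T^{2}\times I$, so the classification reduces, via Honda's theorem on tight contact structures on thickened tori (extended to accommodate Giroux torsion) together with Eliashberg's classification of overtwisted structures, to an analysis of the dividing sets on standard convex neighbourhood tori of $L_0$ and $L_1$.

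For existence I would give explicit front projections of Legendrian realisations in $(S^3,\xist)$ for part~(a), and then produce the exceptional and loose realisations in the overtwisted structures $\xi_d$ by contact $(\pm 1)$-surgery on stabilised Legendrian unknots, combined with the standard procedure for turning a Legendrian arc passing near an overtwisted disc into an exceptional knot. The sporadic realisations in (b), (c), and~(d) can be built by placing Legendrian unknots on a convex Heegaard torus of $S^{3}$ and reading off $\tb$ and $\rot$ from the slopes and dividing sets. Case~(e) requires constructing, for every $p\in\N$ of $\pi$-twisting, Hopf links whose complement carries a tight structure on $T^{2}\times I$ with Giroux torsion~$p$; these are obtained by inserting $p$ torsion layers into a minimally twisting model with the appropriate boundary slopes, and a routine count of the resulting surgery diagram identifies the ambient contact structure as $\xi_{\pm 1/2}$.

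For uniqueness I would thicken each component $L_i$ to a standard contact solid torus with convex boundary carrying two dividing curves of slope $-1/\ttt_i$ (in suitable framings), and deduce that the closure of the complement is $T^{2}\times I$ with prescribed boundary dividing sets. In the strongly exceptional cases (b) and~(c), tightness together with zero Giroux torsion places the complement in Honda's finite list of minimally twisting tight contact structures on $T^{2}\times I$; combined with the effect of stabilisation (which factors off a basic slice) this yields both the admissible rotation numbers and the count of coarse equivalence classes, matching the tables to be given in Section~\ref{section:compute}. Case~(e) proceeds similarly but allows insertion of $p$ torsion layers: the torsion together with the boundary slopes determines the tight structure up to contactomorphism, while the two possible sign choices on the outermost basic slices account for the pair of rotation-number options; when $(\ttt_0,\ttt_1)=(\pm 1,\pm 1)$ the two choices are interchanged by a contactomorphism and collapse to a single class. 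For the loose case~(f), I would appeal to the flexibility theorem for loose Legendrian knots and its extension to links: once the parity condition $\ttt_i+\ttr_i$ odd and the existence of an overtwisted disc in the complement of $L_0\sqcup L_1$ are verified, any two loose Hopf links in $(S^{3},\xi_d)$ with identical classical invariants are coarsely equivalent via an ambient contactomorphism produced by an Eliashberg-style relative isotopy.

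The hardest step, I expect, will be case~(e): proving that non-zero Giroux torsion in the thickened-torus complement does not produce additional coarse equivalence classes after gluing back the standard neighbourhoods of $L_0$ and $L_1$, and conversely that the two rotation-number choices really give distinct classes respecting the ordering and orientation of the components. This requires careful bookkeeping of basic slices at both boundary tori, together with a symmetry argument that distinguishes the two sign options for every $p\geq 1$ while confirming uniqueness when $(\ttt_0,\ttt_1)=(\pm 1,\pm 1)$. A secondary subtlety, in case~(d), is that $\ttt_0=0$ forces a dividing set of infinite slope on $\partial\nu(L_0)$, so one must re-choose the framing to apply Honda's gluing theorems consistently, and this boundary case will need to be handled separately from (a)--(c).
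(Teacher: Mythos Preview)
Your overall architecture---upper bounds via Honda's classification of tight structures on $T^2\times[0,1]$, then matching lower bounds by explicit construction---is the same as the paper's. But the execution differs in two places, and one of them is a genuine gap.

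For the strongly exceptional cases (b)--(d), your plan to place unknots on a convex Heegaard torus and read off invariants from slopes is not how the paper proceeds, and it is unclear how you would carry it through. The paper instead builds each realisation as an explicit contact $(\pm 1)$-surgery diagram, then computes $\tb$, $\rot$, and $d_3$ from linking matrices (formulas~(\ref{eqn:tb}), (\ref{eqn:rot}), (\ref{eqn:d3surgery})), and---crucially---verifies that the link complement is tight by cancelling all $(+1)$-surgeries via $(-1)$-surgeries along $L_0$, $L_1$ and their push-offs. Your Heegaard-torus picture gives no obvious mechanism for certifying tightness of the complement or for pinning down~$d_3$; without those you cannot confirm that you have hit the upper bound with \emph{distinct} realisations in the correct ambient structure. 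Conversely, for case~(e) the paper does \emph{not} use surgery diagrams at all but rather builds concrete models of $(S^3,\xi_{\pm 1/2})$ as contact cuts of $(T^2\times[0,1],\ker\alpha_p)$ and exhibits the Hopf links as explicit Legendrian torus knots there; the twisting and the ambient $d_3$ are then read off directly from the model, not from any ``routine count of the resulting surgery diagram''.

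The real gap is in~(f). You invoke ``the flexibility theorem for loose Legendrian knots and its extension to links'' as a black box. The paper explicitly declines to do this, remarking that the general theory of~\cite{etny13} ``would have to be adapted to links'', and instead gives an \emph{ad hoc} argument exploiting the specific topology of the Hopf link complement: after matching $\tb$ one builds a diffeotopy carrying a tubular neighbourhood of one link contactomorphically to the other, then shows the two contact structures on the complementary $T^2\times[0,1]$ are homotopic rel boundary as plane fields (using the rotation numbers to align frames along longitudes, and $\pi_1(\SO(3))=\Z_2$, $\pi_2(\SO(3))=0$ to kill the remaining obstructions), so that Eliashberg's overtwisted classification applies. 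Your proposal skips this work by citing a theorem that, for links, is not available off the shelf.
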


Explicit realisations will be exhibited below. Those
examples will give us a complete list of the classical invariants
that can be realised. Observe that exceptional realisations of
the Hopf link exist only in the three overtwisted structures
$\xi_{\pm 1/2}$ and~$\xi_{3/2}$.

For easier navigation, here is a guide to the paper, indicating where
each part of Theorem~\ref{thm:main} will be proved. Part (a) about
realisations in the tight contact structure, which was proved earlier
in~\cite{dige07}, will be discussed in Section~\ref{section:tight}.

The classification of strongly exceptional realisations, parts
(b) to~(d), is achieved in Section~\ref{section:exceptional},
and this takes up the largest part of the paper. In
Section~\ref{section:link-complement} we determine the number of tight
contact structures on the link complement as a function of
the values $\ttt_0,\ttt_1$ of the Thurston--Bennequin invariants,
using results of Giroux~\cite{giro00} and Honda~\cite{hond00I}.
This gives an upper bound on the number of Legendrian realisations.
We show that this bound is attained in all cases by exhibiting
explicit realisations in contact surgery diagrams. This strategy
was developed in~\cite{geon15} for the classification of
Legendrian rational unknots in lens spaces.

The classification of the Hopf links with twisting in the complement, part~(e),
will be given in Section~\ref{subsection:proofe}, based on the discussion
in the preceding parts of Section~\ref{section:twisting}. The necessary
preparations to describe explicit realisations in this case
are contained in Section~\ref{section:cut}, where we construct
a couple of overtwisted contact structures on $S^3$ as contact
cuts in the sense of Lerman~\cite{lerm01}. We recover some
results of Dymara~\cite{dyma04} about exceptional realisations
of the unknot in this model of~$S^3$, with considerably simplified
arguments.

Statement (f) about the classification of loose Legendrian Hopf links
will be proved in Section~\ref{section:loose}.
\section{Contact structures on $S^3$}
\label{section:contS3}
Throughout we are dealing with \emph{(co-)oriented}
and \emph{positive} contact structures on the $3$-sphere $S^3$, that is,
tangent $2$-plane fields $\xi$ that are described as
$\xi=\ker\alpha$ with some globally defined $1$-form $\alpha$ satisfying
$\alpha\wedge\rmd\alpha>0$ with respect to the standard
orientation of $S^3\subset\C^2$.

The standard contact structure
\begin{equation}
\label{eqn:xist}
\xist=\ker(x_1\,\rmd y_1-y_1\,\rmd x_1+x_2\,\rmd y_2-y_2\,\rmd x_2)
\end{equation}
on $S^3$ is the
unique tight contact structure, up to isotopy, on the $3$-sphere.
Furthermore, there is a countable family of overtwisted
contact structures. Their classification up to isotopy
coincides with their homotopy classification as tangent $2$-plane
fields.

There are two invariants that equally detect the homotopy class
of an oriented tangent $2$-plane field $\xi$ on~$S^3$. The first one is the
Hopf invariant~$h$. The definition of this invariant
presupposes that we fix a trivialisation
$TS^3\cong S^3\times\R^3$ of the tangent bundle of $S^3$.
The Gau{\ss} map of $\xi$ may then be regarded as
a map $S^3\rightarrow S^2$, which has a well-defined Hopf invariant~$h\in\Z$.

Alternatively, one may use the $d_3$-invariant introduced by
Gompf~\cite{gomp98}, cf.~\cite{dgs04,gost99}. This can be computed
from any compact almost complex $4$-manifold $(X,J)$
with boundary $\partial X=S^3$ such that the complex line
$TS^3\cap J(TS^3)$ in the tangent bundle $TS^3$ coincides
with the oriented plane field~$\xi$.
According to~\cite[Thm.~4.16]{gomp98},
the $d_3$-invariant is computed from this data as
\begin{equation}
\label{eqn:d3acs}
d_3(\xi)=\frac{1}{4}\bigl(c_1^2(X,J)-3\sigma(X)-2\chi(X)\bigr),
\end{equation}
where $c_1$ denotes the first Chern class, $\sigma$ the signature,
and $\chi$ the Euler characteristic of $(X,J)$.
Such an almost complex filling $(X,J)$ of $(S^3,\xi)$
can always be found, and $d_3(\xi)$ is independent of the choice
of filling. The $d_3$-invariant can be defined more generally for any
oriented tangent $2$-plane field on any closed, oriented $3$-manifold,
provided the Euler class of~$\xi$ is torsion. Notice that the
definition of $d_3(\xi)$ does not involve a choice of trivialisation
of the tangent bundle of the $3$-manifold in question.

For $S^3$ the $d_3$-invariant takes values in $\Z+\frac{1}{2}$,
see~\cite[Remark~2.6]{dgs04}.

\begin{rem}
Observe that the Hopf invariant $h(\xi)$ does not depend on the
choice of (co-)orientation of~$\xi$, since composition with the
antipodal map of $S^2$ does not change the Hopf invariant of a map
$S^3\rightarrow S^2$. The same is true for $d_3(\xi)$,
since $c_1(X,J)=-c_1(X,-J)$. This implies that on $S^3$
any oriented contact structure is (co-)orientation-reversingly
isotopic to itself. For $\xist$ as in (\ref{eqn:xist})
such an isotopy is given by a rotation through an angle $\pi$ in the
$x_1x_2$-plane; this isotopy carries over to suitable surgery descriptions
of the overtwisted contact structures.
\end{rem}

We shall also need the following formula \cite[Cor.~3.6]{dgs04}
for the $d_3$-invariant of a contact manifold $(Y,\xi)$
with $c_1(\xi)$ torsion that is obtained by contact
$(\pm 1)$-surgery in the sense of \cite{dige04}
along the oriented components of a Legendrian
link $\LL=\LL_+\sqcup\LL_-$, all of which have non-vanishing
Thurston--Bennequin invariant. In this situation,
\begin{equation}
\label{eqn:d3surgery}
d_3(\xi)=\frac{1}{4}\bigl( c^2-3\sigma (X)-2\chi (X)\bigr) +q;
\end{equation}
here $q$ denotes the number of components of $\LL_+$,
and $c\in H^2 (X)$ is the cohomology class
determined by $c(\Sigma_i)=\rot(L_i)$ for each $L_i\subset \LL$,
where $\Sigma_i\subset X$ is the oriented surface obtained by gluing
a Seifert surface of $L_i$ with the core disc
of the corresponding handle.

When we view $S^3$ as the unit sphere in the quaternions,
a natural trivialisation of $TS^3$ is provided by the basis
$\bfi p,\bfj p,\bfk p$ of $T_pS^3$, $p\in S^3$. With this choice
we have $h(\xist)=0$, since
$\xi_{\mathrm{st},p}$ is spanned by $\bfj p$ and $\bfk p$, so the Gau{\ss} map
$p\mapsto\bfi p$ of $\xist$ is the constant map with respect to this
trivialisation. This choice is understood in the following lemma.

\begin{lem}
\label{lem:h-d3}
The Hopf invariant $h$ and the $d_3$-invariant of oriented tangent $2$-plane
fields on $S^3$ are related by $d_3=-h-\frac{1}{2}$.
\end{lem}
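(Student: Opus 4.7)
The plan is to verify the identity on $\xist$ and then propagate it to every homotopy class of oriented tangent $2$-plane fields on $S^3$ by a single local modification.

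\textbf{Base case.} With the quaternionic trivialisation of $TS^3$, $\xi_{\mathrm{st},p}$ is spanned by $\bfj p,\bfk p$ and its positive unit normal $\bfi p$ corresponds to the constant vector $(1,0,0)\in\R^3$. Thus the Gauss map of $\xist$ is constant and $h(\xist)=0$. The unit ball $X=B^4\subset\C^2$ is an almost complex filling of $(S^3,\xist)$ with $c_1(X,J)=0$, $\sigma(X)=0$ and $\chi(X)=1$, so (\ref{eqn:d3acs}) gives $d_3(\xist)=-\tfrac{1}{2}$, matching $-h(\xist)-\tfrac{1}{2}$.

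\textbf{Reduction and local modification.} By the Pontryagin--Thom theorem the Hopf invariant identifies the set $\pi_0$ of homotopy classes of oriented $2$-plane fields on $S^3$ with $\pi_3(S^2)\cong\Z$; combined with Eliashberg's overtwisted classification and the uniqueness of the tight structure, $d_3$ identifies the same set with $\Z+\tfrac{1}{2}$. It therefore suffices to exhibit one explicit modification of the plane field which shifts $h$ by $+1$ and $d_3$ by $-1$: iterating it (and its mirror image) reaches every homotopy class from $\xist$, and the base case then forces $F:=h+d_3\equiv-\tfrac{1}{2}$. Concretely, I would perform a full Lutz twist on a transverse unknot $T\subset(S^3,\xist)$ of self-linking number $-1$, supported in a Darboux ball, producing a new contact structure $\xi_L$ that agrees with $\xist$ outside a solid-torus neighbourhood $N$ of $T$. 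On $N$ the Lutz-modified field is obtained from $\xist$ by a $2\pi$-rotation along the meridional discs; compactifying this local difference to a map $S^3\to S^2$ and computing its degree via Pontryagin's construction on a regular fibre identifies it with a generator of $\pi_3(S^2)$, so $|h(\xi_L)-h(\xist)|=1$. The same Lutz twist can be realised by two successive contact $(+1)$-surgeries on parallel Legendrian push-offs of $T$ with $\tb=-1$ (cf.\ \cite{dige04}), and applying (\ref{eqn:d3surgery}) to the resulting handle-body yields $|d_3(\xi_L)-d_3(\xist)|=1$.

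\textbf{Main obstacle.} The substantive step is the simultaneous sign bookkeeping: one has to check that the degree of the local Gauss-map modification and the surgery contribution to $d_3$ have opposite signs, so that $F$ is indeed preserved rather than shifted by~$\pm 2$. This amounts to matching the orientation convention of the quaternionic trivialisation (which controls $h$) against that of the almost complex filling inside the Lutz model (which controls $d_3$). The calculation of $d_3(\xi_L)$ via (\ref{eqn:d3surgery}) is otherwise routine, and once the signs are reconciled the identity $d_3=-h-\tfrac{1}{2}$ follows.
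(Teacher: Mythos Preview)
Your base case is correct and matches the paper's, but the inductive modification fails: a \emph{full} Lutz twist along a transverse knot does not change the homotopy class of the underlying oriented $2$-plane field, so $h(\xi_L)=h(\xist)=0$ and you never leave the base case. This is a standard fact in the theory of Lutz twists (see the discussion in \cite{geig08}); heuristically, the extra $2\pi$-rotation you insert radially along the meridional discs can be unwound rel boundary of the solid torus, so the compactified difference map $S^3\to S^2$ has Hopf invariant~$0$, not~$\pm1$. The operation that does shift $h$ is the $\pi$-Lutz twist, for which the change is $\slk(K)$. Your surgery description is also off: the relevant result is in \cite{dgs05}, not \cite{dige04}, and it realises a $\pi$-Lutz twist along the transverse push-off of a Legendrian $L$ as contact $(+1)$-surgeries on $L$ \emph{and} on a Legendrian push-off of $L$ carrying two additional negative stabilisations, not on two plain parallel copies.

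The paper sidesteps your sign-bookkeeping obstacle and your vague ``mirror image'' step by explicit computation in two independent directions. It performs a $\pi$-Lutz twist on a transverse unknot with $\slk=-1$, obtaining $h=-1$, and then computes $d_3=\tfrac{1}{2}$ directly from the surgery diagram via~(\ref{eqn:d3surgery}); separately, it twists along a transverse right-handed trefoil with $\slk=+1$, obtaining $h=+1$ and $d_3=-\tfrac{3}{2}$. All other values of $h$ are then reached by Lutz-twisting along disjoint unlinked unions of copies of these two knots: self-linking, and hence the resulting~$h$, is additive under such unions, while on the $d_3$-side one uses the connected-sum formula $d_3(\xi\#\xi')=d_3(\xi)+d_3(\xi')+\tfrac{1}{2}$. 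The signs are thus pinned down by arithmetic rather than by an orientation-matching argument.
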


\begin{proof}
The standard contact structure $\xist$ may be regarded as
the complex tangencies of $S^3\subset\C^2$, and the unit ball
in $\C^2$ constitutes an almost complex filling. Formula (\ref{eqn:d3acs})
then yields $d_3(\xist)=-\frac{1}{2}$, so the claimed relation
between $h$ and $d_3$ holds in this case.

In order to verify the relation in general, we consider
the effect of a $\pi$-Lutz twist along a transverse knot $K$
in $(S^3,\xist)$. As shown in \cite[p.~147]{geig08}
or \cite[p.~114]{elfr09}, the resulting
contact structure $\xi_K$ satisfies
\[ h(\xi_K)=\slk (K),\]
where $\slk(K)$ denotes the self-linking number of~$K$.

Let $L_{-1}$ be the standard Legendrian unknot in $(S^3,\xist)$
with $\tb(L_{-1})=-1$ and $\rot(L_{-1})=0$. Its positive transverse
push-off $K_{-1}$, by \cite[Prop.~3.5.36]{geig08}, has self-linking
number
\[ \slk(K_{-1})=\tb(L_{-1})-\rot(L_{-1})=-1.\]
Write $\xi_{-1}$ for the contact structure obtained by
a Lutz twist along $K_{-1}$, so that $h(\xi_{-1})=-1$.
According to \cite{dgs05}, performing
a Lutz twist along $K_{-1}$ has the same effect as
contact $(+1)$-surgeries along $L_{-1}$ and its Legendrian push-off with two
additional negative stabilisations.

Thus, the linking matrix of this surgery diagram is
\[ M=\begin{pmatrix}0&-1\\-1&2\end{pmatrix},\]
and the vector of rotation numbers equals
$\vrot=(0,-2)^{\ttt}$. The number $c^2$ is computed as
$\bfx^{\ttt}M\bfx$, where $\bfx$ is the solution of $M\bfx=\vrot$.
This yields $c^2=0$, and observing that $\sigma=0$ and $\chi=3$ we find
that the contact structure $\xi_{-1}$ satisfies
$d_3(\xi_{-1})=\frac{1}{2}$, which verifies the lemma for $\xi_{-1}$.

Similarly, we can find a Legendrian knot $L_1$ in $(S^3,\xist)$ with 
$\tb(L_1)=1$ and $\rot(L_1)=0$, e.g.\ a suitable Legendrian realisation
of the right-handed trefoil knot \cite[Figure~8]{etho01}. Its
positive transverse push-off $K_1$ has $\slk(K_1)=1-0=1$,
so a Lutz twist along $K_1$ yields a contact structure $\xi_1$
with $h(\xi_1)=1$. The corresponding surgery picture, by a
computation analogous to the one above, allows us to compute
$d_3(\xi_1)=-\frac{3}{2}$, which accords with our claim.

Under the disjoint (and unlinked) union of copies of $K_{-1}$ and $K_1$,
the self-linking number and hence the Hopf invariant of the
contact structure obtained by Lutz twists is additive.
The Lutz twists along such a disjoint union amounts
to a connected sum of the contact manifolds obtained by
individual Lutz twists. On the other hand, the
$d_3$-invariant of the connected sum of two contact structures
$\xi,\xi'$ on $S^3$ is given by
\[ d_3(\xi\#\xi')=d_3(\xi)+d_3(\xi')+\frac{1}{2},\]
see \cite[Lemma~4.2]{dgs04}. The formula $d_3=-h-\frac{1}{2}$
now follows in full generality.
\end{proof}

Since we are mostly working with surgery diagrams,
we shall in the sequel denote the overtwisted
contact structures on $S^3$ by their $d_3$-invariant, that is,
we shall write $\xi_{d}$ for the unique overtwisted contact structure
with $d_3(\xi_d)=d\in\Z+\frac{1}{2}$. There can be no confusion with
the notation $\xi_{\pm 1}$, using the Hopf invariant, in the present
section, since the values of the two invariants range over
disjoint sets.
\section{The link complement}
\label{section:link-complement}
The classification of tight contact structures on $T^2\times [0,1]$
is due to Giroux~\cite{giro00} and Honda~\cite{hond00I}.
In this section we use their results to find
the number of tight contact structures on the complement of a
Legendrian Hopf link $L_0\sqcup L_1$, in terms of the
Thurston--Bennequin invariant of the link components.

We think of $S^3$ as being decomposed into two solid tori
$V_0,V_1$, chosen as tubular neighbourhoods of $L_0,L_1$,
respectively, and a thickened torus $T^2\times[0,1]$, i.e.\
\[ S^3=V_0\cup_{\partial V_0=T^2\times\{0\}} T^2\times [0,1]
\cup_{T^2\times\{1\}=\partial V_1} V_1.\]
We write $\mu_i,\lambda_i$ for meridian and longitude
on $\partial V_i$, and we take the gluing in the decomposition above
to be given by
\begin{eqnarray*}
\mu_0     & = & S^1\times\{*\}\times\{0\},\\
\lambda_0 & = & \{*\}\times S^1\times\{0\},\\
\mu_1     & = & \{*\}\times S^1\times\{1\},\\
\lambda_1 & = & S^1\times\{*\}\times\{1\}.
\end{eqnarray*} 

Given a Legendrian Hopf link $L_0\sqcup L_1$ with
$\tb (L_i)=:\ttt_i$, $i=0,1$, we can choose $V_i$ as
a standard neighbourhood of~$L_i$, meaning that $\partial V_i$
is a convex surface with two dividing curves of slope
$1/\ttt_i$ with respect to the identification
of $\partial V_i$ with $\R^2/\Z^2$ defined by $(\mu_i,\lambda_i)$.

On $T^2\times[0,1]$ we measure slopes on the $T^2$-factor with
respect to $(\mu_0,\lambda_0)$. This means that in the
described situation we are dealing with a contact structure
on $T^2\times[0,1]$ with convex boundary, two dividing curves
on either boundary component, of slope $s_0=1/\ttt_0$ on
$T^2\times\{0\}$, and of slope $s_1=\ttt_1$ on $T^2\times\{1\}$.
Recall that a contact structure on $T^2\times[0,1]$
with these boundary conditions is called \emph{minimally twisting}
if every convex torus parallel to the boundary has slope between $s_1$
and~$s_0$.

The following proposition covers all possible pairs $(\ttt_0,\ttt_1)$,
possibly after exchanging the roles of $L_0$ and $L_1$.

\begin{prop}
\label{prop:complement}
Up to an isotopy fixing the boundary, the number $N=N(\ttt_0,\ttt_1)$
of tight, minimally twisting contact structures
on $T^2\times[0,1]$ with convex boundary, two dividing curves
on either boundary component of slope $s_0=1/\ttt_0$ and
$s_1=\ttt_1$, respectively, is as follows.

\begin{itemize}
\item[(a1)] If $\ttt_0,\ttt_1<0$, excluding the
case $\ttt_0=\ttt_1=-1$, we have $N=\ttt_0\ttt_1$.
\item[(a2)] If $\ttt_0=\ttt_1=-1$, there is a unique
structure up to diffeomorphism, and an integral family (distinguished
by a holonomy map) up to isotopy.
\item[(b1)] If $\ttt_0<0$ and $\ttt_1\geq 2$, then $N=2|\ttt_0-1|$.
\item[(b2)] If $\ttt_0<0$ and $\ttt_1=1$, then $N=|\ttt_0-2|$.
\item[(c1)] If $\ttt_0=\ttt_1=1$, there is a unique
structure up to diffeomorphism, and an integral family (distinguished
by a holonomy map) up to isotopy.
\item[(c2)] $N(2,1)=2$, $N(3,1)=3$, and $N(2,2)=4$.
\item[(c3)] For all $\ttt_0\geq 4$ we have $N(\ttt_0,1)=4$;
for all $\ttt_0\geq 3$ we have $N(\ttt_0,2)=6$.
\item[(c4)] For all $\ttt_0\geq\ttt_1\geq3$, we have $N(\ttt_0,\ttt_1)=8$.
\item[(d)] For all $\ttt_1\in\Z$, we have $N(0,\ttt_1)=2$.
\end{itemize}
\end{prop}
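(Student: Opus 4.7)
The plan is to deduce the proposition from the Giroux--Honda classification of tight contact structures on $T^2\times[0,1]$ with convex boundary carrying two dividing curves of prescribed slope on each component. In the minimally twisting case, this classification gives the isotopy count as a product of integers extracted from the negative continued fraction expansion of the ``distance'' between the normalised boundary slopes, after applying a suitable $\SL(2,\Z)$-change of basis on~$T^2$. When the normalisation makes the two boundary slopes coincide (equivalently, when their Farey distance equals~$1$), the minimally twisting hypothesis becomes vacuous, and one obtains a $\Z$-family of isotopy classes distinguished by a holonomy parameter, all diffeomorphic to one another.

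For each case (a1)--(d), I would first record the boundary slopes as $s_0=1/\ttt_0$ and $s_1=\ttt_1$ in the basis $(\mu_0,\lambda_0)$ specified in the decomposition of~$S^3$. I would then apply an $\SL(2,\Z)$ change of basis sending $s_1$ to the standard slope $-1$; this turns $s_0$ into a fraction whose negative continued fraction I would expand. The Giroux--Honda formula then produces the count $N$ as a simple product. For (a1), (b1) and (b2) the continued fraction expansions all take the form of a string of $-2$'s capped off by a single further entry depending on the relation between $\ttt_0$ and~$\ttt_1$, and the formula yields the values $\ttt_0\ttt_1$, $2|\ttt_0-1|$ and $|\ttt_0-2|$ respectively. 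The strongly exceptional cases (c2)--(c4) each require a short explicit check, as the continued fractions then have length at most three. Case (d), where $s_0=1/0$ is infinite, is reduced to a finite-slope computation by an additional basis swap interchanging meridian and longitude. The degenerate cases (a2) and (c1) are precisely those where the normalised boundary slopes coincide; the holonomy $\Z$-family supplies the claimed integer parameter, while diffeomorphism collapses it to a single class.

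The principal obstacle will be the organisational book-keeping: the correct $\SL(2,\Z)$ normalisation, and hence the shape of the resulting continued fraction, depends non-uniformly on the signs of $\ttt_0, \ttt_1$ and on the size of $\ttt_0\ttt_1$, so the proof naturally splits into the sub-cases listed in the statement. A secondary subtlety is recognising exactly when minimal twisting must be abandoned in favour of the holonomy-parametrised family; this happens precisely for $(\ttt_0,\ttt_1)=(-1,-1)$ and $(1,1)$, corresponding to Farey distance one between $s_0$ and~$s_1$. Getting the case (d) right also requires some care, since the infinite slope $s_0$ sits on the boundary of the Farey disc and the basis-swap trick must be carried out so as to preserve the minimally twisting condition in the rotated frame.
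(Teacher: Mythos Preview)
Your approach is exactly the paper's: normalise the boundary slopes via an $\SL(2,\Z)$-change of basis so that one becomes $-1$ and the other lies in $(-\infty,-1]$, then read off $N$ from Honda's product formula for the negative continued fraction expansion, with the degenerate cases $(\ttt_0,\ttt_1)=(\pm1,\pm1)$ handled by the holonomy classification. One small correction: in (c3) and (c4) the continued fractions are not of bounded length --- they contain strings of $-2$'s of length $\ttt_0-3$ and $\ttt_1-2$ --- but since each $-2$ contributes a factor $|-2+1|=1$ to the product, this does not affect the final count.
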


\begin{proof}
In all cases, we need to normalise the slopes by applying
an element of $\mathrm{Diff}^+(T^2)\simeq\SL(2,\Z)$ to $T^2\times[0,1]$
such that the slope on $T^2\times\{0\}$ becomes $s_0'=-1$,
and on $T^2\times\{1\}$ we have $s_1'\leq -1$. If $s_1'<-1$,
the number $N$ is found from a continuous fraction expansion
\[ s_1'=r_0-\cfrac{1}{r_1-\cfrac{1}{r_2-\cdots-\cfrac{1}{r_k}}}
=:[r_0,\ldots,r_k]\]
with all $r_i<-1$ as
\begin{equation}
\label{eqn:N}
N=|(r_0+1)\cdots(r_{k-1}+1)r_k|,
\end{equation}
see~\cite[Theorem~2.2(2)]{hond00I}.
The vector $\begin{pmatrix}x\\y\end{pmatrix}$ stands for
the curve $x\mu_0+y\lambda_0$, with slope $y/x$.

(a1) We have
\[ \begin{pmatrix}0&1\\-1&\ttt_0-1\end{pmatrix}
\begin{pmatrix}\ttt_0\\1\end{pmatrix}=
\begin{pmatrix}1\\-1\end{pmatrix}\;\;\;\text{and}\;\;\;
\begin{pmatrix}0&1\\-1&\ttt_0-1\end{pmatrix}
\begin{pmatrix}1\\\ttt_1\end{pmatrix}=
\begin{pmatrix}\ttt_1\\ \ttt_0\ttt_1-\ttt_1-1\end{pmatrix}.\]
This means
\begin{equation}
\label{eqn:slope1}
s_1'=\ttt_0-1-\frac{1}{\ttt_1}.
\end{equation}
If $\ttt_1\leq -2$, this is a continued fraction expansion
$[\ttt_0-1,\ttt_1]$ as required by~\cite{hond00I}, and by (\ref{eqn:N})
we have $N=\ttt_0\ttt_1$. If $\ttt_1=-1$, but $\ttt_2\leq-2$,
we have the continued fraction expansion $s_1'=[\ttt_0]$,
and again this gives $|\ttt_0|=\ttt_0\ttt_1$ structures.

(a2) If $\ttt_0=\ttt_1=-1$, we can apply the same
transformation as in~(a), and we are then in the situation $s_0'=s_1'=-1$
of \cite[Theorem~2.2(4)]{hond00I}, cf.~\cite[Theorem~6.1]{etny-class},
which gives the claimed number.

(b1) Using the transformation as in (a), we find the same $s_1'<-1$ as
in~(\ref{eqn:slope1}), but this is not, as it stands,
a continued fraction expansion of the required form. From the continued
fraction expansion
\begin{equation}
\label{eqn:fracp}
-\frac{p+1}{p}=[\underbrace{-2,\ldots,-2}_p] \;\;\text{for}\;\; p\in\N
\end{equation}
we find
\[ \ttt_0-1-\frac{1}{\ttt_1}=\ttt_0-\frac{\ttt_1+1}{\ttt_1}=
[\ttt_0-2,\underbrace{-2,\ldots,-2}_{\ttt_1-1}].\]
By (\ref{eqn:N}) this yields $N=2|\ttt_0-1|$.

(b2) In this case the transformed slope is $s_1'=\ttt_0-2$, so
the continued fraction expansion is $[\ttt_0-2]$, giving us
$|\ttt_0-2|$ structures by~(\ref{eqn:N}).

(c) We consider the transformation
\[ \begin{pmatrix}1&1-\ttt_0\\-2&-1+2\ttt_0\end{pmatrix}
\begin{pmatrix}\ttt_0\\1\end{pmatrix}=
\begin{pmatrix}1\\-1\end{pmatrix}\]
and
\[\begin{pmatrix}1&1-\ttt_0\\-2&-1+2\ttt_0\end{pmatrix}
\begin{pmatrix}1\\\ttt_1\end{pmatrix}=
\begin{pmatrix}1+\ttt_1-\ttt_0\ttt_1\\-2-\ttt_1+2\ttt_0\ttt_1\end{pmatrix}.\]
This gives
\[ s_1'=\frac{-2-\ttt_1+2\ttt_0\ttt_1}{1+\ttt_1-\ttt_0\ttt_1}=
-2+\frac{\ttt_1}{1+\ttt_1-\ttt_0\ttt_1},\]
which is smaller than $-1$ for $\ttt_0\geq\ttt_1>0$, except
in the cases $(\ttt_0,\ttt_1)=(1,1)$ and $(\ttt_0,\ttt_1)=(2,1)$,
when $s_1'=-1$ or $s_1'=\infty$, respectively.

(c1) For $(\ttt_0,\ttt_1)=(1,1)$ the argument is now as in the case~(a2).

(c2) For $(\ttt_0,\ttt_1)=(2,2)$ we have $s_1'=-4$, which by
(\ref{eqn:N}) gives $N=4$. For $\ttt_1=1$ and $\ttt_0=3$ we have $s_1'=-3$,
and hence $N=3$. For $(\ttt_0,\ttt_1)=(2,1)$ we need to choose
a different transformation to obtain $s_1'\leq-1$. Such a transformation is
given by
\[ \begin{pmatrix}2&-3\\-3&5\end{pmatrix}
\begin{pmatrix}2\\1\end{pmatrix}=
\begin{pmatrix}1\\-1\end{pmatrix}\;\;\;\text{and}\;\;\;
\begin{pmatrix}2&-3\\-3&5\end{pmatrix}
\begin{pmatrix}1\\1\end{pmatrix}=
\begin{pmatrix}-1\\2\end{pmatrix},\]
which yields $s_1'=-2$ and hence $N=2$.

(c3) For $\ttt_0\geq4$, we have the continued fraction expansion
\[ s_1'=\frac{-3+2\ttt_0}{2-\ttt_0}=-3+\frac{\ttt_0-3}{\ttt_0-2}=
-3-\frac{1}{-\frac{\ttt_0-2}{\ttt_0-3}}=
[-3,\underbrace{-2,\ldots,-2}_{\ttt_0-3}]\]
by~(\ref{eqn:fracp}), which by (\ref{eqn:N}) yields $N=2\cdot 2=4$.

For $\ttt_0\geq 3$ and $\ttt_1=2$ we find
\[ s_1'=\frac{4\ttt_0-4}{3-2\ttt_0}=
[-3,\underbrace{-2,\ldots,-2}_{\ttt_0-3},-3].\]
In order to verify this continued fraction expansion, one may observe that
\begin{equation}
\label{eqn:23}
[\underbrace{-2,\ldots,-2}_p,-3]=-\frac{2p+3}{2p+1},
\end{equation}
which is easily proved by induction. Given this expansion for~$s_1'$, with
(\ref{eqn:N}) we obtain $N=2\cdot 3=6$.

(c4) The formula (\ref{eqn:23}) can be generalised to
\[ [\underbrace{-2,\ldots,-2}_p,\frac{a}{b}]:=
-2-\cfrac{1}{-2-\cfrac{1}{-2-\cdots-\cfrac{1}{a/b}}}=
-\frac{(p+1)a+pb}{pa+(p-1)b}.\]
It is then straightforward to verify that
\[ s_1'=\frac{-2-\ttt_1+2\ttt_0\ttt_1}{1+\ttt_1-\ttt_0\ttt_1}=
[-3,\underbrace{-2,\ldots,-2}_{\ttt_0-3},-3,
\underbrace{-2,\ldots,-2}_{\ttt_1-2}].\]
With (\ref{eqn:N}) this yields
\[ N=|(-3+1)\cdot(-3+1)\cdot (-2)|=8.\]

(d1) For $\ttt_0=0$ and $\ttt_1>0$ we use the transformation
\[\begin{pmatrix}0&1\\-1&-1\end{pmatrix}
\begin{pmatrix}0\\1\end{pmatrix}=
\begin{pmatrix}1\\-1\end{pmatrix}\;\;\;\text{and}\;\;\;
\begin{pmatrix}0&1\\-1&-1\end{pmatrix}
\begin{pmatrix}1\\ \ttt_1\end{pmatrix}=
\begin{pmatrix}\ttt_1\\-1-\ttt_1\end{pmatrix},\]
giving us
\[ s_1'=-\frac{\ttt_1+1}{\ttt_1}=[\underbrace{-2,\ldots,-2}_{\ttt_1}]\]
by~(\ref{eqn:fracp}), and hence $N=2$ by~(\ref{eqn:N}).

(d2) For $\ttt_0=0$ and $\ttt_1<0$, the transformation
\[\begin{pmatrix}-\ttt_1+1&1\\ \ttt_1-2&-1\end{pmatrix}
\begin{pmatrix}0\\1\end{pmatrix}=
\begin{pmatrix}1\\-1\end{pmatrix}\;\;\;\text{and}\;\;\;
\begin{pmatrix}-\ttt_1+1&1\\ \ttt_1-2&-1\end{pmatrix}
\begin{pmatrix}1\\ \ttt_1\end{pmatrix}=
\begin{pmatrix}1\\-2\end{pmatrix}\]
gives us $s_1'=-2$, and hence $N=2$ by~(\ref{eqn:N}).

(d3) For $\ttt_0=\ttt_1=0$ we use the transformation
\[\begin{pmatrix}1&1\\-2&-1\end{pmatrix}
\begin{pmatrix}0\\1\end{pmatrix}=
\begin{pmatrix}1\\-1\end{pmatrix}\;\;\;\text{and}\;\;\;
\begin{pmatrix}1&1\\-2&-1\end{pmatrix}
\begin{pmatrix}1\\0\end{pmatrix}=
\begin{pmatrix}1\\-2\end{pmatrix}.\]
Once again, this yields $s_1'=-2$ and $N=2$.
\end{proof}

When a tight contact structure on $T^2\times[0,1]$ is not minimally
twisting, one can associate with it a natural number, called the
\emph{$\pi$-twisting in the $[0,1]$-direction}~\cite[Section~2.2.1]{hond00I},
or simply \emph{twisting}.

There is also a notion of torsion for contact structures,
introduced by Giroux~\cite{giro94,giro99}. Let $(M,\xi)$ be a contact
$3$-manifold and $[T]$ an isotopy class of embedded $2$-tori in~$M$.
Then the \emph{$\pi$-torsion} (or simply \emph{torsion})
of $(M,\xi)$ is the supremum of $n\in\N_0$ for which there is
a contact embedding of
\[ \bigl( T^2\times[0,1],\ker(\sin(n\pi z)\,\rmd x+\cos(n\pi z)\,\rmd y)
\bigr) \]
into $(M,\xi)$, with $T^2\times\{z\}$ being sent to the class~$[T]$.

As explained in \cite[p.~86]{hond00II},
the twisting of a contact structure on $T^2\times[0,1]$
equals its torsion with respect to the class $[T^2\times\{z\}]$.
For the computation of the torsion, it is assumed that
the characteristic foliation on the boundary tori is a
non-singular foliation of some rational slope; the twisting is computed
for a convex boundary with dividing curves of the same slope, obtained
by a slight perturbation of the boundary tori.

The following isotopy classification can be found in
\cite[Theorem~2.2]{hond00I}. The diffeomorphism classification
for $s_1=-1$ can be deduced from the explicit description
of these structures in \cite[Lemma~5.2]{hond00I}. The fact that
in all other cases there are two structures even up to diffeomorphism
is a consequence of having two Legendrian realisations of Hopf links
whose complement has such boundary data, see Section~\ref{section:twisting}.

\begin{prop}
\label{prop:complement-twist}
Up to an isotopy fixing the boundary, the number of tight
contact structures on $T^2\times[0,1]$ with convex boundary,
two dividing curves on either boundary of slope $s_0=-1$ and $s_1\leq -1$
and positive twisting $n\in\N$, equals two for each~$n$. Up to
a diffeomorphism fixing the boundary, the number is likewise two,
except in the case $s_1=-1$, when it equals one.
\qed
\end{prop}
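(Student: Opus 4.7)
The isotopy assertion of exactly two tight structures for each $n\in\N$ is a direct application of \cite[Theorem~2.2]{hond00I}. My plan is to factor a representative $T^2\times[0,1]$ into a minimally twisting collar interpolating between slopes $s_0=-1$ and $s_1\leq -1$, together with a stack of basic slices contributing the additional $n\pi$-twist that cycles the dividing slope through a full rotation $n$ times. Honda's shuffling and amalgamation rules force all the sign choices on the twisting slices to collapse to a single binary invariant, yielding precisely two isotopy classes.

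For the diffeomorphism count in the symmetric case $s_1=-1$, I would use the explicit parametrisation of these structures provided by \cite[Lemma~5.2]{hond00I}, where the contact planes may be written as an essentially rotation-symmetric family of $1$-forms $\cos(f(z))\,\rmd x+\sin(f(z))\,\rmd y$ for a suitable monotone $f$. Combining the reflection $z\mapsto 1-z$ with a suitable element of the mapping class group of $T^2$ produces a boundary-fixing self-diffeomorphism of $T^2\times[0,1]$ that reverses the sign of the extra twisting slice while preserving both boundary slopes and the dividing data; this fuses the two isotopy classes into a single diffeomorphism class.

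For $s_1<-1$ the claim to establish is that the two isotopy classes are still distinct up to a boundary-fixing diffeomorphism. The strategy is to realise, for each such parameter set $(s_1,n)$, two Legendrian Hopf links $L_0\sqcup L_1$ in some $(S^3,\xi)$ whose complement is a standard convex $T^2\times[0,1]$ with the prescribed boundary slopes and twisting~$n$, but whose rotation-number pairs differ. These pairs of links are exactly those that will be produced in Section~\ref{section:twisting}, where coarse inequivalence will be verified via the invariants developed in Section~\ref{section:contS3}. Any boundary-fixing diffeomorphism of the complement carrying one isotopy class to the other would glue across the standard tubular neighbourhoods $V_0,V_1$ to a contactomorphism of $(S^3,\xi)$ sending the first Hopf link to the second as an ordered, oriented link, contradicting their coarse inequivalence. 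Hence the two isotopy classes remain two diffeomorphism classes.

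The main obstacle is the distinguishing step in the $s_1<-1$ case. Internal invariants of $T^2\times[0,1]$ with fixed boundary, such as a relative Euler class computed against a chosen trivialisation of the dividing curves, can in principle be flipped by an orientation-preserving element of the mapping class group of $T^2$; thus the non-existence of a boundary-fixing diffeomorphism between the two structures cannot be read off from the boundary data alone but must be imported from the ambient classification of Legendrian Hopf links in $S^3$. This is what forces the logical dependence of Proposition~\ref{prop:complement-twist} on the material of Section~\ref{section:twisting}.
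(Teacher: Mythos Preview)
Your proposal is correct and tracks the paper's own justification almost step for step: the isotopy count is taken directly from \cite[Theorem~2.2]{hond00I}; the collapse to a single diffeomorphism class when $s_1=-1$ is extracted from the explicit model of \cite[Lemma~5.2]{hond00I} via a symmetry; and the persistence of two diffeomorphism classes when $s_1<-1$ is imported from the existence of two coarsely inequivalent Legendrian Hopf links with that complement, constructed in Section~\ref{section:twisting}. You have also correctly identified the logical dependence on the later section.

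The one point of divergence is the explicit symmetry in the $s_1=-1$ case. The paper (see the Remark following the proposition) uses the pure $T^2$-map $(x,y)\mapsto(-y,x)$, acting as the identity in the $z$-direction, to interchange the two coorientations. Your proposed map instead involves the reflection $z\mapsto 1-z$, which swaps the two boundary tori; no accompanying element of the mapping class group of $T^2$ can undo that swap, so the composite is not boundary-fixing in the sense required by the proposition, even though $s_0=s_1$. The paper's choice keeps each boundary component in place, which is what one wants here.
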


\begin{rem}
The two contact structures with a given positive twisting
and the same boundary data differ only by the choice of coorientation.
For $s_1=-1$, a diffeomorphism changing the coorientation is given
by $T^2\rightarrow T^2$, $(x,y)\mapsto (-y,x)$.
\end{rem}

By transforming the slopes as in the proof of
Proposition~\ref{prop:complement}, we see that
Proposition~\ref{prop:complement-twist} applies likewise to contact
structures with positive twisting and any
combination of boundary slopes $s_0=1/\ttt_0$ and $s_1=\ttt_1$.
\section{Hopf links in $(S^3,\xist)$}
\label{section:tight}
The classification of Legendrian Hopf links in the tight contact
structure $\xist$ on~$S^3$, up to Legendrian
isotopy, was carried out in \cite{dige07}
as part of a more general study of Legendrian cable links.
As shown there, these links are classified by their
classical invariants, and the range of these invariants
is the same for each component as for a single Legendrian unknot,
i.e.\ the Thurston--Bennequin invariants of the two
components can be any pair of negative integers, and for
$\tb(L_i)=-m$ the rotation number of $L_i$ can take any value
in the set
\[ \{-m+1,-m+3,\ldots, m-3, m-1\}.\]
Explicit realisations are given by stabilising the components
of the Hopf link shown (in the front projection) in
Figure~\ref{figure:hopf-link}, where the two components have
$\tb=-1$ and $\rot=0$.

\begin{figure}[h]
\centering
\includegraphics[scale=1]{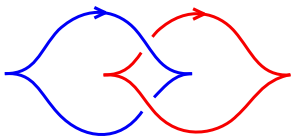}
  \caption{Legendrian Hopf link in $(S^3,\xist)$.}
  \label{figure:hopf-link}
\end{figure}

Here is an alternative proof of
this result. For given values of $\ttt_0,\ttt_1<0$, we have
$\ttt_0\ttt_1$ explicit realisations in $(S^3,\xist)$, which is
the maximal number possible by Proposition~\ref{prop:complement}~(a).
There are no realisations in $(S^3,\xist)$ with one of the
$\ttt_i$ being non-negative, since Legendrian unknots
in $(S^3,\xist)$ satisfy $\tb<0$ by the
Bennequin inequality \cite[Theorem~4.6.36]{geig08}. Also,
there are no realisations with twisting in the complement, since
this would force the corresponding contact structure on $S^3$ to be
overtwisted.

This proves part (a) of Theorem~\ref{thm:main}.
\section{Strongly exceptional Hopf links}
\label{section:exceptional}
In this section we classify the Legendrian realisations of the
Hopf link in overtwisted contact structures whose link
complement is tight and minimally twisting.
\subsection{Kirby moves}
We begin with some examples of Kirby diagrams of the Hopf link
that will be relevant in several cases of this classification.

\begin{lem}
\label{lem:kirby}
(i) The oriented link $L_0\sqcup L_1$ in the surgery diagram
shown in the first line of Figure~\ref{figure:b1kirby}
is a positive or negative Hopf link in~$S^3$, depending
on $n$ being even or odd.

(ii) The same is true for the link shown
in the first line of Figure~\ref{figure:c3-t0-1kirby}.

(iii) The oriented link $L_0\sqcup L_1$ on the
top left of Figure~\ref{figure:b2kirby}
is a positive Hopf link; the same holds for the links
in Figures \ref{figure:c2-3-1kirby} and~\ref{figure:dkirby}.
\end{lem}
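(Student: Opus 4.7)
The plan is to reduce each of the surgery diagrams to an uncluttered Kirby picture in~$S^3$ containing only $L_0$ and $L_1$, and then to identify the resulting two-component link as a (positive or negative) Hopf link by computing the residual linking number. The first step is to convert the contact $(\pm 1)$-surgery coefficients to smooth framings $\tb\pm 1$, so that the diagrams become ordinary integer surgery diagrams on which standard Kirby calculus applies.

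For parts (i) and (ii), the diagrams contain a chain of stabilised Legendrian push-offs which, after the contact-to-smooth translation, become a chain of $(-2)$-framed unknots together with one or two $(\pm 1)$-framed companions. I would collapse the chain by inductive blow-downs from one end to the other. Each blow-down of a $(\pm 1)$-framed unknot~$U$ that links two components $C$ and $C'$ modifies the linking numbers by
\[ \lk(C,C')\longmapsto \lk(C,C')\mp\lk(U,C)\,\lk(U,C'), \]
and each blow-down of a $(-2)$-framed unknot lowers the framing of any adjacent component by~$1$, collapsing the chain one link at a time. The cancelling pairs of $(\pm 1)$-framed unknots produced along the way confirm that the ambient manifold remains~$S^3$. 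After the whole chain has been eliminated, one is left with $L_0$ and $L_1$ forming a single clasp, i.e.\ a Hopf link.

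The main technical point---and the crux of the argument---is a careful bookkeeping of orientations throughout the induction, since this is exactly what produces the parity statement. As the length of the stabilisation chain is linear in~$n$, the sign of the final clasp toggles at each step, so that $L_0\sqcup L_1$ ends up as a positive Hopf link precisely when $n$ has the prescribed parity and as a negative one otherwise. Making these sign changes rigorous via the linking-number formula above, rather than by an ad hoc inspection of each picture, is the step I expect to require the most care.

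For part (iii), the diagrams in Figures~\ref{figure:b2kirby}, \ref{figure:c2-3-1kirby} and~\ref{figure:dkirby} do not involve a parameter, so it suffices to perform the same sequence of blow-downs once and verify by direct inspection that the resulting oriented clasp is the standard positive Hopf clasp. This reduces to a single computation per diagram, with no inductive step and hence no parity issue.
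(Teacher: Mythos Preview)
Your plan is essentially what the paper does: its proof consists of pointing to the sequences of Kirby moves drawn in Figures~\ref{figure:b1kirby}--\ref{figure:dkirby}, which are precisely the chain-collapsing simplifications you describe, with the parity of the final clasp emerging from the bookkeeping along the way.

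Two small corrections. First, you cannot blow down a $(-2)$-framed unknot; what actually happens is that blowing down the initial $(-1)$-framed unknot in the chain converts the adjacent $(-2)$ into a $(-1)$, which can then be blown down in turn, and so on. Your linking-number formula already handles this correctly once you phrase the induction this way. Second, the diagram in Figure~\ref{figure:dkirby} \emph{does} carry a parameter~$n$, so part~(iii) is not parameter-free there; you will need the same inductive bookkeeping as in~(i). Finally, the paper treats~(ii) more efficiently than a parallel induction: the Kirby moves in Figure~\ref{figure:c3-t0-1kirby} reduce that diagram directly to the first line of Figure~\ref{figure:b1kirby} with $n$ replaced by $n+2$, so~(ii) follows immediately from~(i).
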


\begin{proof}
For (i) and (iii) this follows from the Kirby moves shown in the corresponding
figure. For (ii) we observe that the Kirby moves in
Figure~\ref{figure:c3-t0-1kirby} reduce this to the situation in~(i),
with $n$ replaced by $n+2$. (The roles of $L_0$
and $L_1$ are exchanged in this diagram compared with
Figure~\ref{figure:b1kirby}; this choice conforms with the
Legendrian realisations discussed below.)
\end{proof}

\begin{figure}[h]
\labellist
\small\hair 2pt
\pinlabel $-1$ [b] at 34 221
\pinlabel $-2$ at 24 230
\pinlabel $-2$ [bl] at 49 240
\pinlabel $-2$ [bl] at 60 240
\pinlabel $-2$ [bl] at 95 240
\pinlabel $L_1$ [br] at 22 240
\pinlabel $L_0$ [bl] at 116 240
\pinlabel $n$ [t] at 75 208
\pinlabel $1$ [r] at 18 184
\pinlabel $1$ [r] at 18 176
\pinlabel $L_1$ [br] at 29 190
\pinlabel $1$ [b] at 40 170
\pinlabel $-2$ [bl] at 55 189
\pinlabel $-2$ [bl] at 66 189
\pinlabel $-2$ [bl] at 99 189
\pinlabel $L_0$ [bl] at 120 189
\pinlabel $L_1$ [r] at 23 142
\pinlabel $1$ [r] at 22 136
\pinlabel $1$ [r] at 22 129
\pinlabel $1$ [b] at 39 146
\pinlabel $-2$ [bl] at 55 144
\pinlabel $-2$ [bl] at 67 144
\pinlabel $-2$ [bl] at 100 144
\pinlabel $L_0$ [bl] at 121 144
\pinlabel $L_1$ [br] at 21 100
\pinlabel $-1$ [b] at 44 103
\pinlabel $-2$ [bl] at 58 100
\pinlabel $-2$ [bl] at 72 100
\pinlabel $-2$ [bl] at 106 100
\pinlabel $L_0$ [bl] at 126 100
\pinlabel $L_1$ [br] at 34 58
\pinlabel $-1$ [b] at 57 60
\pinlabel $-2$ [b] at 72 60
\pinlabel $-2$ [b] at 106 60
\pinlabel $L_0$ [bl] at 124 60
\pinlabel $L_1$ [br] at 38 20
\pinlabel $L_0$ [bl] at 71 20
\pinlabel $L_1$ [br] at 99 20
\pinlabel $L_0$ [bl] at 132 20
\pinlabel ${\text{$n$ odd}}$ [r] at 32 12
\pinlabel ${\text{$n$ even}}$ [l] at 138 12
\endlabellist
\centering
\includegraphics[scale=1.5]{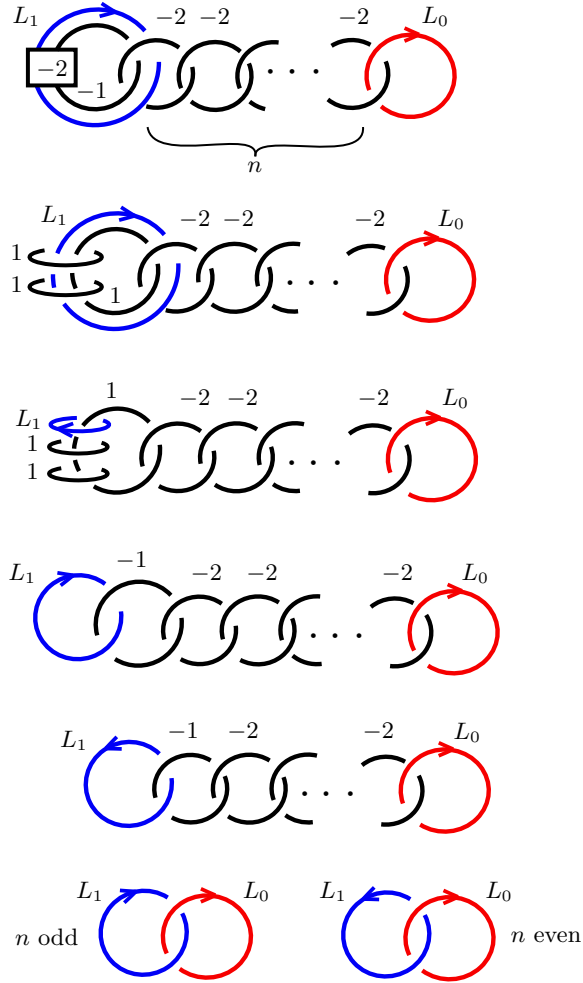}
  \caption{Kirby diagram of a Hopf link I.}
  \label{figure:b1kirby}
\end{figure}

\begin{figure}[h]
\labellist
\small\hair 2pt
\pinlabel $L_0$ [br] at 3 71
\pinlabel $L_1$ [bl] at 11 52
\pinlabel $-1$ at 33 59
\pinlabel $0$ [bl] at 62 69
\pinlabel $0$ [tr] at 56 66
\pinlabel $L_0$ [br] at 84 71
\pinlabel $L_1$ [bl] at 92 52
\pinlabel $1$ [r] at 100 62
\pinlabel $1$ [bl] at 144 69
\pinlabel $1$ [tr] at 137 66
\pinlabel $L_0$ [br] at 18 22
\pinlabel $L_1$ [bl] at 64 22
\pinlabel $-1$ [tr] at 29 13
\pinlabel $L_0$ [br] at 88 22
\pinlabel $L_1$ [bl] at 123 18
\endlabellist
\centering
\includegraphics[scale=1.8]{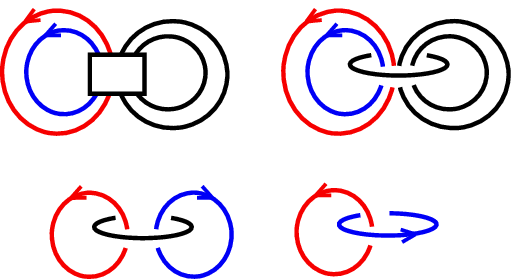}
  \caption{Kirby diagram of a Hopf link II.}
  \label{figure:b2kirby}
\end{figure}

\begin{figure}[h]
\labellist
\small\hair 2pt
\pinlabel $L_0$ [br] at 22 76
\pinlabel $-2$ at 21 60
\pinlabel $-2$ [t] at 34 72
\pinlabel $-1$ at 53 60
\pinlabel $L_1$ [bl] at 76 76
\pinlabel $0/0/0$ [t] at 66 43
\pinlabel $L_0$ [br] at 104 76
\pinlabel $1$ [br] at 94 67
\pinlabel $1$ [tr] at 94 60
\pinlabel $1$ [br] at 123 64
\pinlabel $1$ [b] at 116 51
\pinlabel $L_1$ [bl] at 160 76
\pinlabel $1/1/1$ [t] at 149 43
\pinlabel $L_0$ [b] at 3 23
\pinlabel $1$ [r] at 0 16
\pinlabel $1$ [r] at 0 9
\pinlabel $1$ [bl] at 28 22
\pinlabel $-2$ [tl] at 43 13
\pinlabel $L_1$ [bl] at 56 22
\pinlabel $L_0$ [br] at 78 22
\pinlabel $L_1$ [bl] at 124 22
\pinlabel $-1$ [tl] at 112 14
\pinlabel $L_0$ [br] at 149 22
\pinlabel $L_1$ [b] at 179 21
\endlabellist
\centering
\includegraphics[scale=1.8]{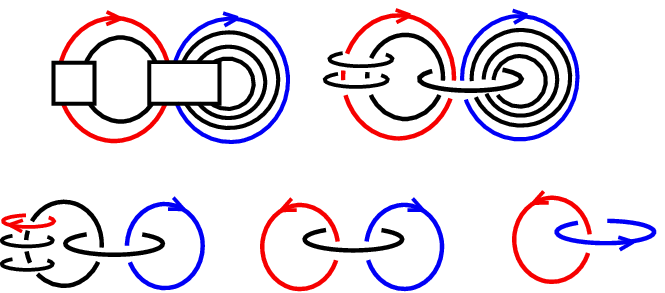}
  \caption{Kirby diagram of a Hopf link III.}
  \label{figure:c2-3-1kirby}
\end{figure}

\begin{figure}[h]
\labellist
\small\hair 2pt
\pinlabel $L_0$ [br] at 12 148
\pinlabel $-2$ at 11 133
\pinlabel $-1$ [b] at 24 123
\pinlabel $-2$ [b] at 40 143
\pinlabel $-2$ [b] at 52 143
\pinlabel $-2$ [b] at 67 143
\pinlabel $-2$ [b] at 86 143
\pinlabel $-3$ [b] at 101 143
\pinlabel $-1$ at 117 133
\pinlabel $L_1$ [bl] at 141 148
\pinlabel $0/0/0$ [t] at 129 116
\pinlabel $n$ [t] at 65 114
\pinlabel $L_0$ [br] at 12 95
\pinlabel $1$ [r] at 0 85
\pinlabel $1$ [r] at 0 79
\pinlabel $1$ [b] at 24 69
\pinlabel $-2$ [b] at 39 89
\pinlabel $-2$ [b] at 52 89
\pinlabel $-2$ [b] at 67 89
\pinlabel $-2$ [b] at 86 89
\pinlabel $-2$ [b] at 101 89
\pinlabel $1$ [tr] at 104 77
\pinlabel $L_1$ [bl] at 141 95
\pinlabel $1/1/1$ [t] at 129 62
\pinlabel $n$ [t] at 64 60
\pinlabel $L_0$ [r] at 7 31
\pinlabel $1$ [r] at 7 25
\pinlabel $1$ [r] at 7 18
\pinlabel $1$ [b] at 25 37
\pinlabel $-2$ [b] at 37 33
\pinlabel $-2$ [b] at 52 33
\pinlabel $-2$ [b] at 67 33
\pinlabel $-2$ [b] at 86 33
\pinlabel $-2$ [b] at 101 33
\pinlabel $-2$ [tl] at 124 22
\pinlabel $L_1$ [bl] at 136 31
\pinlabel $n+2$ [t] at 75 0
\endlabellist
\centering
\includegraphics[scale=1.6]{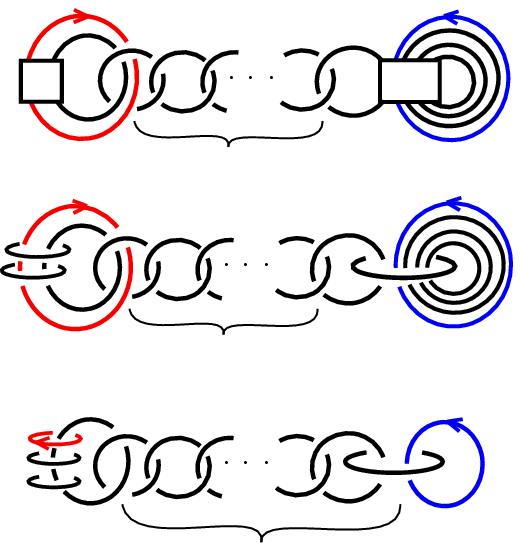}
  \caption{Kirby diagram of a Hopf link IV.}
  \label{figure:c3-t0-1kirby}
\end{figure}

\begin{figure}[h]
\labellist
\small\hair 2pt
\pinlabel $L_1$ [br] at 49 149
\pinlabel $-1$ at 51 135
\pinlabel $-1$ [b] at 64 126
\pinlabel $-1$ at 83 135
\pinlabel $L_0$ [r] at 103 136
\pinlabel $n$ [t] at 112 132
\pinlabel $0/\ldots/0$ [t] at 99 114
\pinlabel $L_1$ [br] at 135 149
\pinlabel $1$ [br] at 130 139
\pinlabel $1$ [b] at 150 126
\pinlabel $1$ [t] at 166 133
\pinlabel $L_0$ [b] at 185 128
\pinlabel $n$ [t] at 201 132
\pinlabel $1/\ldots/1$ [t] at 188 114
\pinlabel $L_1$ [r] at 2 89
\pinlabel $1$ [r] at 0 80
\pinlabel $1$ [b] at 20 95
\pinlabel $1$ [t] at 33 78
\pinlabel $L_0$ [b] at 52 72
\pinlabel $n$ [t] at 69 76
\pinlabel $1/\ldots/1$ [t] at 55 60
\pinlabel $L_1$ [r] at 91 89
\pinlabel $0$ [b] at 108 95
\pinlabel $-1$ [r] at 111 81
\pinlabel $L_0$ [b] at 141 72
\pinlabel $n-2$ [tl] at 156 77
\pinlabel $1/\ldots/1$ [t] at 143 60
\pinlabel $L_1$ [r] at 180 89
\pinlabel $1$ [b] at 198 94
\pinlabel $1$ at 214 80
\pinlabel $L_0$ [r] at 235 81
\pinlabel $n-2$ [tl] at 244 77
\pinlabel $2/\ldots/2$ [t] at 230 60
\pinlabel $L_1$ [br] at 6 24
\pinlabel $1$ [t] at 23 13
\pinlabel $1$ at 31 26
\pinlabel $L_0$ [tr] at 50 27
\pinlabel $n-2$ [tl] at 58 20
\pinlabel $2/\ldots/2$ [t] at 46 2
\pinlabel $L_1$ [br] at 84 24
\pinlabel $-1$ at 99 17
\pinlabel $1$ at 103 28
\pinlabel $L_0$ [r] at 123 25
\pinlabel $n-2$ [tl] at 131 22
\pinlabel $1/\ldots/1$ [t] at 117 2
\pinlabel $L_1$ [br] at 162 36
\pinlabel $L_0$ [tl] at 190 27
\pinlabel $n-2\;\text{times}\;1$ [t] at 168 4
\pinlabel $L_1$ [br] at 217 36
\pinlabel $L_0$ [tl] at 245 24
\endlabellist
\centering
\includegraphics[scale=1.3]{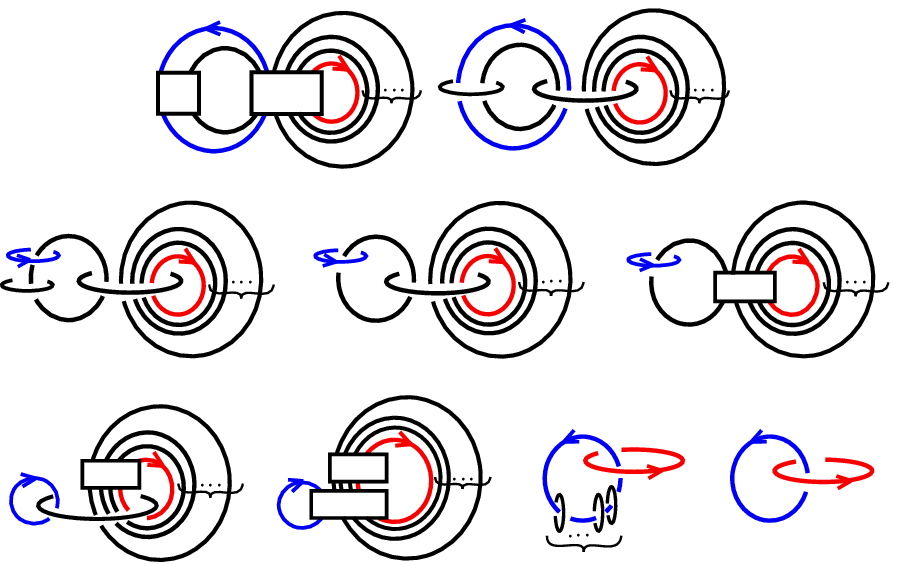}
  \caption{Kirby diagram of a Hopf link V.}
  \label{figure:dkirby}
\end{figure}
\subsection{Computing the invariants}
We are going to describe Legendrian realisations $L_0\sqcup L_1$ of the
Hopf link in $(S^3,\xi_d)$ as front projections in a contact surgery diagram
for this contact manifold. Here we briefly summarise how to
compute the classical invariants in this setting.

We number the Legendrian knots in the contact surgery diagram as
$K_1,\ldots, K_n$ and choose
auxiliary orientations on them. Write $M$
for the corresponding linking matrix, with the diagonal entries
given by the topological surgery framing. Given the presentation
of a Legendrian knot $L_i$ in the surgery diagram, we form the
extended linking matrix
\[ M_i=\left(\begin{array}{c|ccc}
0            & \lk(L_i,K_1) & \cdots & \lk(L_i,K_n)\\ \hline
\lk(L_i,K_1) &              &        &             \\
\vdots       &              & M      &             \\
\lk(L_i,K_n) &              &        &
\end{array}\right). \]
\subsubsection{Thurston--Bennequin invariant}
Write $\tb_i$ for the Thurston--Bennequin invariant of $L_i$
as a knot in $(S^3,\xist)$, before performing the contact surgeries
along $K_1,\ldots,K_n$. Then, as shown in \cite[Lemma~6.6]{loss09},
the Thurston-Bennequin invariant $\tb(L_i)$ in the contact
structure on $S^3$ obtained by contact surgeries along $K_1,\ldots,K_n$ 
is
\begin{equation}
\label{eqn:tb}
\tb(L_i)=\tb_i+\frac{\det M_i}{\det M}.
\end{equation}
Alternatively, one can keep track of the contact framing of $L_i$
during the topological Kirby moves that turn the given surgery diagram
into the empty diagram, and $L_0\sqcup L_1$ into the standard Hopf
link in this empty diagram for~$S^3$.
\subsubsection{Rotation number}
By $\rot_i$ we denote the rotation number of $L_i$ before the
surgery. Write
\[ \vrot=\bigl(\rot(K_1),\ldots,\rot(K_n)\bigr)\]
for the vector of rotation numbers of the surgery knots,
and
\[ \vlk_i=\bigl(\lk(L_i,K_1),\ldots,\lk(L_i,K_n)\bigr)\]
for the vector of linking numbers. Then, again by~\cite[Lemma~6.6]{loss09},
the rotation number $\rot(L_i)$ of $L_i$ after the surgery is
\begin{equation}
\label{eqn:rot}
\rot(L_i)=\rot_i-\langle\vrot,M^{-1}\vlk_i\rangle.
\end{equation}
\subsubsection{The $d_3$-invariant}
In order to compute the $d_3$-invariant with formula (\ref{eqn:d3surgery}),
we need to read off $c^2,\sigma$ and $\chi$ from the surgery diagram.
Each surgery knot corresponds to the attaching of a $2$-handle,
hence the Euler characteristic of the handlebody is
$\chi=1+n$.

The signature $\sigma$ can be determined as
the signature of the linking matrix~$M$. More topologically,
one can determine $\sigma$ from the Kirby moves for showing that
the surgery diagram actually gives a description of~$S^3$. These Kirby moves
involve handle slides and the blowing down of $(\pm 1)$-framed unknots.
The signature $\sigma$ equals the number of these blow-downs,
counted with sign, minus the number of $(\pm 1)$-framed unknots that may
have been introduced into the Kirby diagram to replace twisting boxes.

The computation of $c^2$ has been explained in~\cite{dgs04}.
Find the solution vector $\bfx$ of the equation $M\bfx=\vrot$;
then $c^2=\bfx^{\ttt}M\bfx=\langle\bfx,\vrot\rangle$.
\subsection{Detecting exceptional links}
In order to decide whether a Legendrian knot presented
in a contact surgery diagram of $(S^3,\xi)$ is exceptional, we first need to
verify that the ambient contact structure $\xi$ is overtwisted.
If the $d_3$-invariant differs from that of the standard
structure~$\xist$, that is, if $d_3(\xi)\neq -\frac{1}{2}$, this
is obvious. In the case where $d_3(\xi)=-\frac{1}{2}$, it suffices to find
a Legendrian knot in the surgered manifold that violates the
Bennequin inequality \cite[Theorem~4.6.36]{geig08} for Legendrian knots
in tight contact $3$-manifolds. In our examples, one of $L_0$ or $L_1$
will have this property.

Secondly, we need to establish that the contact structure on the
link complement $S^3\setminus(L_0\sqcup L_1)$ is tight. The method we use is
to perform contact surgeries on $L_0$ and $L_1$, perhaps also on
Legendrian push-offs of these knots, such that the
resulting contact manifold is tight. If there had been an overtwisted
disc in the complement of $L_0\sqcup L_1$, this would persist
after the surgery.

Here we always employ the cancellation lemma from \cite{dige04},
cf.~\cite[Proposition 6.4.5]{geig08}, which says that a contact
$(-1)$-surgery and a contact $(+1)$-surgery along a Legendrian knot
and its Legendrian push-off, respectively, cancel each other.
Thus, if by contact $(-1)$-surgeries along $L_0$ and $L_1$
we can cancel all contact $(+1)$-surgeries in the surgery diagram,
and thus obtain a Stein fillable and hence tight contact $3$-manifold,
the Legendrian Hopf link will have been exceptional. In fact,
in this case there is also no torsion in the complement, so
the link is strongly exceptional.
\subsection{Exceptional unknots}
The individual components $L_0,L_1$ of an exceptional Hopf link may be
exceptional or loose. Which of the two cases occurs for either
component can be decided by referring to the classification of exceptional
unknots due to Eliashberg and Fraser~\cite{elfr09}, cf.~\cite{geon15},
which we recall here.

\begin{thm}[Eliashberg--Fraser]
\label{thm:EF}
Exceptional unknots can only be realised in the contact structure
$\xi_{1/2}$ on~$S^3$. Up to coarse equivalence, they are classified
by their classical invariants, which can take the values
$(\tb,\rot)=(n,\pm(n-1))$, $n\in \N$.
\end{thm}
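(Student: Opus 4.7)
The plan is to reduce the classification to Honda's description of tight contact structures on a solid torus. Let $L$ be an exceptional Legendrian unknot in $(S^3,\xi)$ with $\xi$ overtwisted, and let $V:=S^3\setminus\Int N(L)$ be the complement of a standard tubular neighbourhood of~$L$. Writing $\ttt:=\tb(L)$, the boundary $\partial V$ is convex with two dividing curves, and in the natural meridian--longitude basis $(\mu_V,\lambda_V)$ of~$V$ the dividing slope equals~$\ttt$ (the reciprocal of the slope $1/\ttt$ on $\partial N(L)$ after interchanging meridian and longitude under the standard genus--one Heegaard splitting of $S^3$).

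The first step is to constrain~$\ttt$. If $\ttt=0$, then the dividing curves on $\partial V$ are meridional, so $V$ contains a compressing disc of vanishing Thurston--Bennequin invariant and hence an overtwisted disc, contradicting tightness of the complement. If $\ttt\leq -1$, every tight contact structure on $V$ with slope $\ttt$ on $\partial V$ glues with the standard neighbourhood $N(L)$ to a tight contact structure on $S^3$; by the uniqueness of the tight $3$-sphere this forces $\xi=\xist$, contrary to assumption. Hence $\ttt=:n\in\N$.

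The second step is to enumerate the tight contact structures on~$V$ with boundary slope $n\geq 1$, and to translate this count into constraints on $\rot(L)$. After normalising the boundary slope to $-1$ via an element of $\SL(2,\Z)$ and applying a continued-fraction expansion as in Proposition~\ref{prop:complement}, the number of tight minimally twisting structures on $V$ is a finite quantity depending on~$n$. Each such structure is determined by its relative Euler class $e\in H^2(V,\partial V;\Z)$; pairing $e$ with a convex Seifert disc for~$L$ recovers $\rot(L)$, and the combinatorial bound forces $\rot(L)=\pm(n-1)$ with $\rot(L)\equiv n-1\pmod 2$. Equality of classical invariants together with uniqueness of the tight structure on $V$ for each admissible Euler class yields coarse equivalence.

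The third step is to exhibit explicit realisations and pin down the ambient contact structure. Contact $(+1)$-surgery along a suitably stabilised Legendrian push-off of a $\tb=-1$ standard unknot produces Legendrian unknots with $(\tb,\rot)=(n,\pm(n-1))$; applying formula~(\ref{eqn:d3surgery}) to this diagram yields $d_3=\tfrac12$, identifying the ambient structure as~$\xi_{1/2}$ and showing that the invariant pairs listed above are all realised. The main obstacle lies in the middle step: one must match Honda's combinatorial invariants on the solid torus $V$ to the rotation number computed from a convex Seifert disc for~$L$, and verify both that the extremal values $\pm(n-1)$ are attained and that no intermediate values of $\rot(L)$ arise --- this is essentially where the sharpness of the Eliashberg--Fraser classification is concentrated.
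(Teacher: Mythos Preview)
The paper does not actually prove Theorem~\ref{thm:EF}; it is quoted as a result of Eliashberg and Fraser~\cite{elfr09}, with a pointer to the surgery-diagram reproof in~\cite{geon15}. So there is no ``paper's own proof'' to compare against. Your outline is in the spirit of the approach in~\cite{geon15}: bound the number of tight structures on the complement via Honda's classification, then realise that many Legendrian unknots by explicit surgery diagrams and compute~$d_3$.

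That said, your second step contains a genuine confusion. The complement $V$ is a \emph{solid torus}, not $T^2\times[0,1]$, so Proposition~\ref{prop:complement} is the wrong reference, and the phrase ``normalising the boundary slope to $-1$ via an element of $\SL(2,\Z)$'' does not make sense here: for a solid torus the meridian is determined (it bounds a disc), so you cannot act by the full $\SL(2,\Z)$. What you need is Honda's classification of tight contact structures on the solid torus with two dividing curves of a given slope \cite[Theorem~2.3]{hond00I}, applied directly. With the meridian of $V$ being the longitude of~$L$, the boundary slope for $\tb(L)=n\geq 1$ works out to $-1/n$ in Honda's conventions, and the count of tight structures is then~$n$ for $n\geq 2$ (and $1$ for $n=1$); matching these to the relative Euler class, hence to $\rot(L)\in\{-(n-1),-(n-3),\ldots,n-1\}$, is what actually happens. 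You must then still argue that only the extremal values $\rot(L)=\pm(n-1)$ occur in an \emph{overtwisted} ambient structure, i.e.\ that the non-extremal tight fillings of $V$ glue to $\xist$ rather than to an overtwisted~$\xi_d$ --- this is the step you flag as ``the main obstacle'', and your sketch does not address it. In~\cite{geon15} this is handled by exhibiting exactly two surgery realisations (those with $\rot=\pm(n-1)$) that land in $\xi_{1/2}$, and observing that the remaining $n-2$ tight solid tori are accounted for by stabilisations of the standard unknot in~$\xist$.

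Your first step (ruling out $\ttt\leq 0$) and third step (realisations and the $d_3$-computation) are correct in outline.
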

\subsection{Legendrian realisations}
\label{subsection:Leg-realise}
We now turn our attention to Legendrian realisations of the
Hopf link in terms of Legendrian surgery diagrams. The invariants
of these realisations are collected in Table~\ref{table:se-invariants}
in Section~\ref{section:compute}.
\subsubsection{Case (b1)}
As we shall explain, Figure~\ref{figure:b1} shows
the $2|\ttt_0-1|$ Legendrian realisations of the
Hopf link with $\ttt_0<0$ and $\ttt_1\geq 2$.
The numbers $k,\ell\in \N_0$ may be chosen
subject to the condition $-\ttt_0=k+\ell$, and $n\in\N_0$ is determined by
$\ttt_1=n+2$. This gives $|\ttt_0-1|$ choices for the pair $(k,\ell)$.
Topologically, the diagram in Figure~\ref{figure:b1} is the
one shown in Figure~\ref{figure:b1kirby}. Hence, for $n$ even
a positive Hopf link is realised when $L_0$ and $L_1$ are both oriented
clockwise or both anticlockwise; for $n$ odd one has to orient
one of the two knots clockwise, the other anticlockwise.
In either case, this gives a factor $2$ in the number
of realisations.

\begin{figure}[h]
\labellist
\small\hair 2pt
\pinlabel $L_1$ [br] at 22 178
\pinlabel $+1$ [l] at 58 167
\pinlabel $n$ [r] at 0 100.5
\pinlabel $k$ [r] at 9 32
\pinlabel $\ell$ [l] at 67 34
\pinlabel $-1$ [l] at 60 138
\pinlabel $-1$ [l] at 60 122
\pinlabel $-1$ [l] at 60 80
\pinlabel $-1$ [l] at 60 63
\pinlabel $L_0$ [tl] at 49 7
\endlabellist
\centering
\includegraphics[scale=1.3]{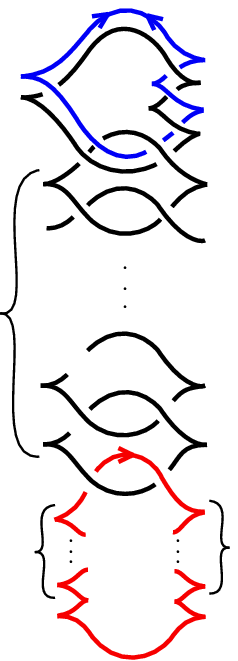}
  \caption{Case (b1): Legendrian Hopf links with $\ttt_0<0$, $\ttt_1\geq 2$.}
  \label{figure:b1}
\end{figure}

The linking matrix $M^{(n)}$ of the surgery diagram, when we order the
$n+1$ surgery knots from top to bottom and orient all of them clockwise,
is the $((n+1)\times (n+1))$-matrix
\[ M^{(n)}=\left(\begin{array}{ccccccc}
-1 & -1 &    &        &        &    &   \\
-1 & -2 & -1 &        &        &    &   \\
   & -1 & -2 & -1     &        &    &   \\
   &    &    & \ddots &        &    &   \\
   &    &    &        & \ddots &    &   \\
   &    &    &        & -1     & -2 & -1\\
   &    &    &        &        & -1 & -2
\end{array}\right) . \]
As shown in \cite[Section~9.1]{geon15}, the determinant of this matrix is
$\det M^{(n)}=(-1)^{n+1}$. The extended linking matrix for $L_0$ is
\[ M_0^{(n)}=\left(\begin{array}{c|cccc}
0      & 0 & \cdots  & 0 & -1 \\ \hline
0      &   &         &   &    \\
\vdots &   & M^{(n)} &   &    \\
0      &   &         &   &    \\
-1     &   &         &   &    
\end{array}\right) . \]
By expanding this matrix along the first row and column, we find
$\det M_0^{(n)}=-\det M^{(n-1)}=(-1)^{n+1}$, too. Hence, by~(\ref{eqn:tb}),
\[ \tb(L_0)=\tb_0+\frac{\det M_0}{\det M}=-(k+\ell+1)+1=-(k+\ell).\]

When $L_0$ is oriented clockwise, we have $\rot_0=\ell-k$.
The first row of $M^{-1}$ is
\[ \bigl(-(n+1),n,-(n-1),\ldots,(-1)^{n+1}\cdot 1\bigr).\]
The vector of rotation numbers of the surgery curves is
$\vrot=(1,0,\ldots,0)^{\ttt}$, and the vector of linking numbers
of $L_0$ with the surgery curves is $\vlk=(0,\ldots,0,-1)^{\ttt}$.
With (\ref{eqn:rot}) this yields
\[ \rot(L_0)=\rot_0-\left\langle
\left(\begin{array}{c}1\\0\\ \vdots\\0\\0\end{array}\right),
M^{-1}
\left(\begin{array}{c}0\\0\\ \vdots\\0\\-1\end{array}\right)
\right\rangle
=\ell-k-(-1)^n.\]

The invariants of $L_1$ have been computed in \cite[Section~9.1]{geon15}.
With the orientation that makes $L_0\sqcup L_1$ a positive Hopf link
(clockwise for $n$ even, counterclockwise for $n$ odd; see
Lemma~\ref{lem:kirby}~(i)), the invariants are
\[ \tb(L_1)=n+2\;\;\;\text{and}\;\;\;\rot(L_1)=-(-1)^n(n+1).\]

The surgery diagram is equivalent to surgery along $n+1$ unlinked
$(-1)$-framed unknots. This gives $\sigma=-(n+1)$ and $\chi=n+2$,
as can be seen from Figure~\ref{figure:b1kirby}.
Moreover, the vector $\bfx=M^{-1}\vrot$ is the first column (or row)
of $M^{-1}$, hence $c^2=\bfx^{\ttt}M\bfx=\langle\bfx,\vrot\rangle=-(n+1)$.
Putting this into formula (\ref{eqn:d3surgery}) gives $d_3(\xi)=1/2$.
The knot $L_1$ is one of the exceptional unknots in $S^3$
described earlier in \cite{elfr09} (see Theorem~\ref{thm:EF})
and, in terms of surgery
diagrams, in~\cite{geon15}; observe that contact $(-1)$-surgery
along $L_1$ cancels the single contact $(+1)$-surgery, and hence
produces a tight contact $3$-manifold. The knot $L_0$ is loose, since
by Theorem~\ref{thm:EF} there
are no exceptional realisations of the unknot with these invariants.
\subsubsection{Case (b2)}
The $|\ttt_0-2|$ Legendrian realisations are shown in
Figure~\ref{figure:b2}, where both $L_0$ and $L_1$ are oriented clockwise,
and $k+\ell=-\ttt_0+1\geq 2$, $k,\ell\in\N_0$.

\begin{figure}[h]
\labellist
\small\hair 2pt
\pinlabel $+1$ [l] at 56 68
\pinlabel $+1$ [l] at 56 60
\pinlabel $L_1$ [r] at 4 52
\pinlabel $k$ [r] at 0 33
\pinlabel $\ell$ [l] at 59 32
\pinlabel $L_0$ [tl] at 41 6
\endlabellist
\centering
\includegraphics[scale=1.7]{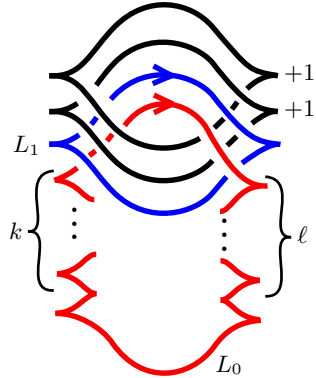}
  \caption{Case (b2): Legendrian Hopf links with $\ttt_0<0$, $\ttt_1=1$.}
  \label{figure:b2}
\end{figure}

The relevant data for the knot $L_1$ are
\[ M=\begin{pmatrix}0&-1\\ -1&0\end{pmatrix}\;\;\;\text{and}\;\;\;
M_0=\begin{pmatrix}0&-1&-1\\-1&0&-1\\-1&-1&0\end{pmatrix}.\]
The signature of $M$ is $\sigma=0$. The vector $\vrot$
is the zero vector. This gives $\tb(L_1)=1$ and $\rot(L_1)=0$.

The knot $L_0$ by itself can be viewed as a stabilisation
of~$L_1$. This yields $\tb(L_0)=1-(k+\ell)=\ttt_0$ and
$\rot(L_0)=\ell-k$. So for case (b2) we want $k+\ell\geq 2$.

The Kirby moves showing that $L_0\sqcup L_1$ is a positive Hopf link
in $S^3$ are shown in Figure~\ref{figure:b2kirby}. The computation of
$d_3=\frac{1}{2}$ for the contact structure on~$S^3$ after the surgery
is straightforward.

Contact $(-1)$-surgery along $L_1$ leaves only a single
contact $(+1)$-surgery along a $\tb=-1$ unknot, which produces
the tight contact structure on $S^1\times S^2$,
see~\cite[Lemma~4.3]{dgs04}. This shows that $L_1$ by itself
is exceptional. (An alternative surgery picture for $L_1$
is shown in \cite[Figure~3]{geon15}.) The knot $L_0$ is loose,
since by Theorem~\ref{thm:EF} there is no exceptional realisation of the
unknot with negative~$\tb$.

The same arguments, with the same surgery picture,
apply when $k+\ell=1$, which gives the subcase $(\ttt_0,\ttt_1)=(0,1)$
of~(d). For $k=\ell=0$ we obtain case~(c1). The only difference
with (b2) is that now $L_0$ is also exceptional, since it
is simply a parallel copy of~$L_1$.
\subsubsection{Case (c2)}
The two realisations with $(\ttt_0,\ttt_1)=(2,1)$ will be described
in Section~\ref{subsubsection:++}. The three realisations with
$(\ttt_0,\ttt_1)=(3,1)$ are shown in Figure~\ref{figure:c2-3-1}.
The Kirby moves in Figure~\ref{figure:c2-3-1kirby} demonstrate
that Figure~\ref{figure:c2-3-1} does indeed depict a Hopf link.

\begin{figure}[h]
\labellist
\small\hair 2pt
\pinlabel $L_0$ [br] at 22 102
\pinlabel $+1$ [l] at 80 91
\pinlabel $+1$ [l] at 77 44
\pinlabel $+1$ [l] at 77 35 
\pinlabel $+1$ [l] at 77 27
\pinlabel $L_1$ [tl] at 64 12
\pinlabel $L_0$ [br] at 160 97
\pinlabel $+1$ [l] at 211 76
\pinlabel $+1$ [l] at 206 41
\pinlabel $+1$ [l] at 206 33
\pinlabel $+1$ [l] at 206 25
\pinlabel $L_1$ [tl] at 195 12
\endlabellist
\centering
\includegraphics[scale=1.4]{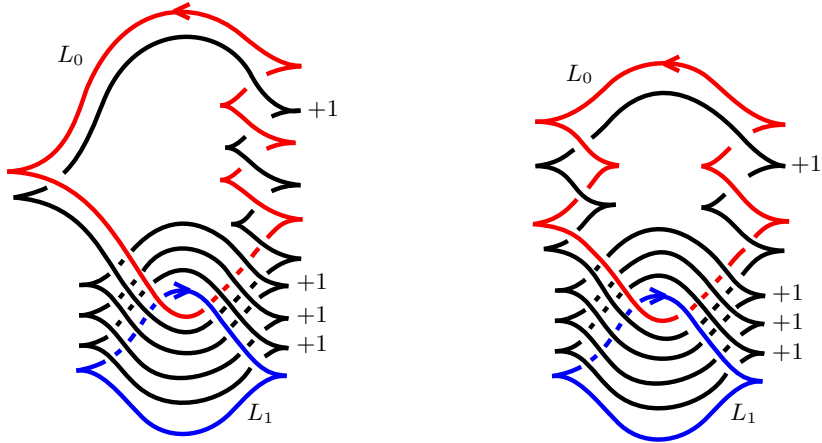}
  \caption{Case (c2): Legendrian Hopf links with $\ttt_0=3$, $\ttt_1=1$.}
  \label{figure:c2-3-1}
\end{figure}

The four realisations with $(\ttt_0,\ttt_1)=(2,2)$ are shown in
Figure~\ref{figure:c2-2-2}. The Kirby moves are similar to the previous case.

\begin{figure}[h]
\labellist
\small\hair 2pt
\pinlabel $L_0$ [br] at 26 117
\pinlabel $+1$ [l] at 80 106
\pinlabel $+1$ [l] at 80 38
\pinlabel $+1$ [l] at 80 29
\pinlabel $L_1$ [tl] at 66 11
\pinlabel $L_0$ [br] at 154 105
\pinlabel $+1$ [l] at 207 95
\pinlabel $+1$ [l] at 207 45
\pinlabel $+1$ [l] at 207 36
\pinlabel $L_1$ [tl] at 193 19
\endlabellist
\centering
\includegraphics[scale=1.2]{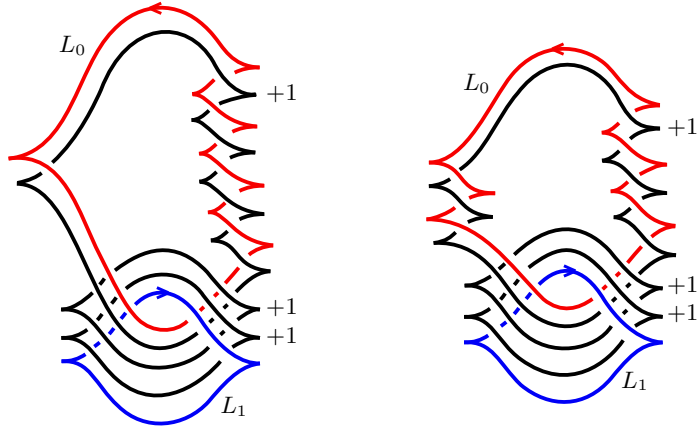}
  \caption{Case (c2): Legendrian Hopf links with $\ttt_0=\ttt_1=2$.}
  \label{figure:c2-2-2}
\end{figure}

The computations of the invariants for the examples in
Figures \ref{figure:c2-3-1} can be found
in Section~\ref{section:compute}; those for Figure~\ref{figure:c2-2-2}
are analogous and are left to the reader.
\subsubsection{Case (c3)}
Figure~\ref{figure:c3-t0-1}, where $n\in\N_0$, shows the four realisations
with $\ttt_0\geq 4$ and $\ttt_1=1$. The computation of the
invariants can be found in Section~\ref{section:compute}.
By Lemma~\ref{lem:kirby}~(ii) we need to give $L_0$ and $L_1$ the same
orientation in the plane for $n$ even; for $n$ odd, the opposite one.

\begin{figure}[h]
\labellist
\small\hair 2pt
\pinlabel $L_0$ [br] at 23 193
\pinlabel $+1$ [l] at 59 178
\pinlabel $-1$ [l] at 59 150
\pinlabel $-1$ [l] at 59 133
\pinlabel $-1$ [l] at 59 92
\pinlabel $-1$ [l] at 59 79
\pinlabel $-1$ [l] at 61 58
\pinlabel $+1$ [l] at 62 37
\pinlabel $+1$ [l] at 62 28
\pinlabel $+1$ [l] at 62 20
\pinlabel $n$ [r] at 0 114
\pinlabel $L_1$ [tl] at 54 10
\pinlabel $L_0$ [br] at 128 193
\pinlabel $+1$ [r] at 117 179
\pinlabel $-1$ [r] at 117 150
\pinlabel $-1$ [r] at 117 137
\pinlabel $-1$ [r] at 117 94
\pinlabel $-1$ [r] at 117 80
\pinlabel $-1$ [l] at 170 60
\pinlabel $+1$ [l] at 172 37
\pinlabel $+1$ [l] at 172 29
\pinlabel $+1$ [l] at 172 21
\pinlabel $n$ [l] at 176 115
\pinlabel $L_1$ [tl] at 159 10
\endlabellist
\centering
\includegraphics[scale=1.5]{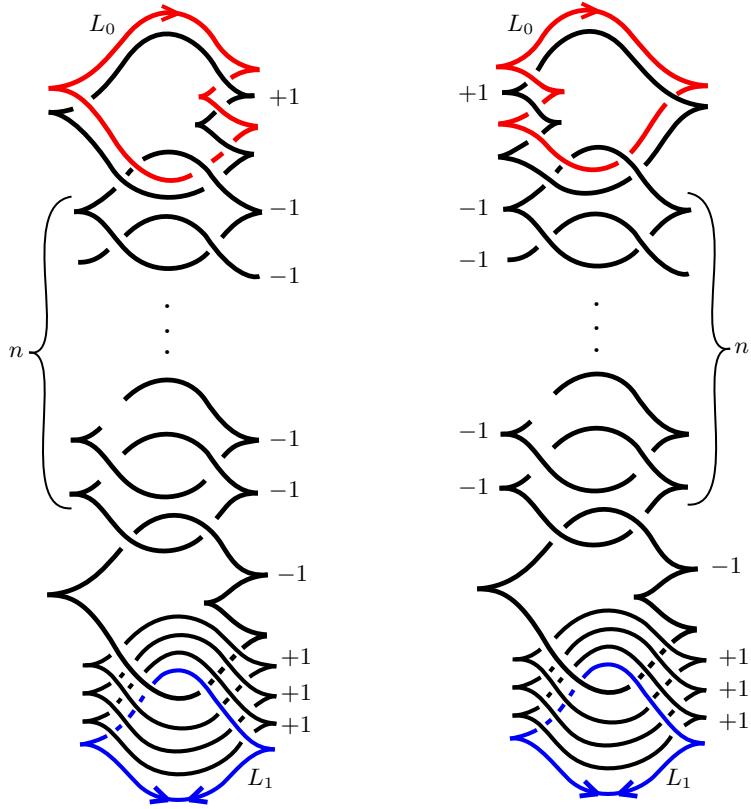}
  \caption{Case (c3): Legendrian Hopf links with $\ttt_0\geq 4$, $\ttt_1=1$.}
  \label{figure:c3-t0-1}
\end{figure}

The six realisations with $\ttt_0\geq 3$ and $\ttt_1=2$ are shown in
Figure~\ref{figure:c3-t0-2}, again with $n\in\N_0$; the invariants are
listed in Table~\ref{table:se-invariants}. For $n$ odd, $L_0$ and $L_1$ are
given the same orientation, for $n$ even, the opposite one.
The computation of the invariants is analogous to the other cases.
We omit the details.

\begin{figure}[h]
\labellist
\small\hair 2pt
\pinlabel $L_1$ [br] at 20 180
\pinlabel $+1$ [l] at 62 167
\pinlabel $-1$ [l] at 62 119
\pinlabel $-1$ [l] at 62 103
\pinlabel $-1$ [l] at 62 62
\pinlabel $-1$ [l] at 62 49
\pinlabel $+1$ [l] at 58 25
\pinlabel $L_0$ [br] at 21 30
\pinlabel $n$ [r] at 1 83
\pinlabel $L_1$ [br] at 119 172
\pinlabel $+1$ [l] at 159 158
\pinlabel $-1$ [l] at 159 119
\pinlabel $-1$ [l] at 159 103
\pinlabel $-1$ [l] at 159 61
\pinlabel $-1$ [l] at 159 48
\pinlabel $+1$ [l] at 154 25
\pinlabel $L_0$ [br] at 118 30
\pinlabel $n$ [r] at 98 83
\pinlabel $L_1$ [bl] at 238 180 
\pinlabel $+1$ [r] at 198 168
\pinlabel $-1$ [r] at 201 121
\pinlabel $-1$ [r] at 201 108
\pinlabel $-1$ [r] at 201 64
\pinlabel $-1$ [r] at 201 49
\pinlabel $+1$ [l] at 247 26
\pinlabel $L_0$ [br] at 209 30
\pinlabel $n$ [l] at 260 84
\endlabellist
\centering
\includegraphics[scale=1.1]{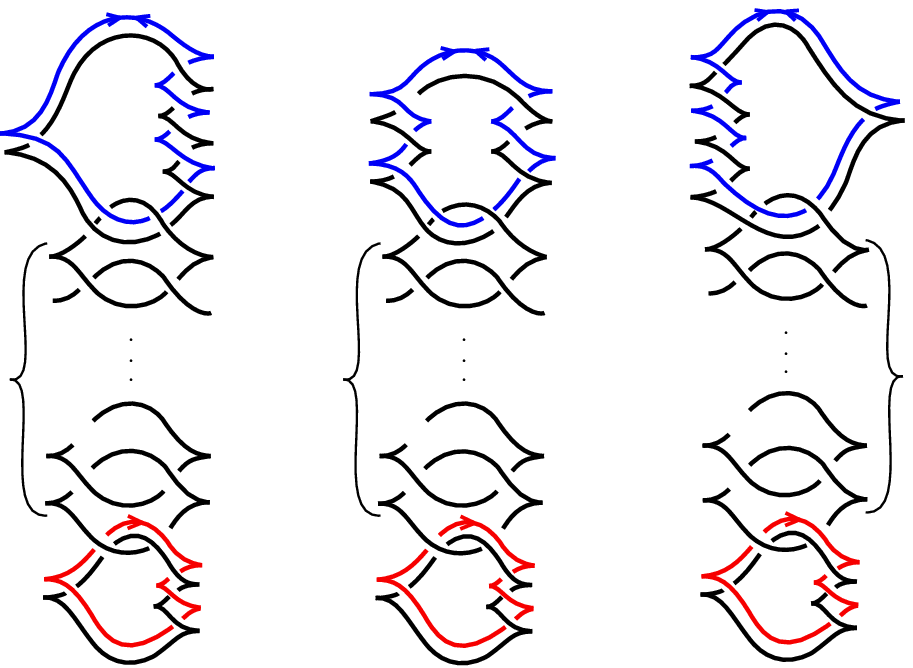}
  \caption{Case (c3): Legendrian Hopf links with $\ttt_0\geq 3$, $\ttt_1=2$.}
  \label{figure:c3-t0-2}
\end{figure}

\subsubsection{Case (c4)}
The eight realisations for each choice of $\ttt_0,\ttt_1\geq 3$
are shown in Figure~\ref{figure:c4}, where $n,m\in\N_0$.
The Kirby moves showing that
these diagrams do indeed depict a Hopf link are similar to those
in Figure~\ref{figure:b1kirby}. For $n+m$ even, one needs to give
$L_0$ and $L_1$ the same orientation, for $n+m$ odd, the
opposite one. By tracking the contact framing of
$L_0$ and $L_1$ through these Kirby moves one finds that
$\ttt_0=n+3$ and $\ttt_1=m+3$. The remaining calculations of the
invariants, listed in Table~\ref{table:se-invariants}, are
analogous to the other cases.

\begin{figure}[h]
\labellist
\small\hair 2pt
\pinlabel $L_0$ [br] at 23 290
\pinlabel $+1$ [l] at 60 279
\pinlabel $-1$ [l] at 60 250
\pinlabel $-1$ [l] at 60 234
\pinlabel $-1$ [l] at 60 192
\pinlabel $-1$ [l] at 60 175
\pinlabel $n$ [r] at 0 213
\pinlabel $-1$ [l] at 60 159
\pinlabel $-1$ [l] at 60 126
\pinlabel $-1$ [l] at 60 110
\pinlabel $-1$ [l] at 60 68
\pinlabel $-1$ [l] at 60 51
\pinlabel $m$ [r] at 0 89
\pinlabel $L_1$ [bl] at 22 16
\pinlabel $+1$ [l] at 60 13
\pinlabel $L_0$ [br] at 124 290
\pinlabel $+1$ [l] at 161 279
\pinlabel $-1$ [l] at 161 250
\pinlabel $-1$ [l] at 161 234
\pinlabel $-1$ [l] at 161 192
\pinlabel $-1$ [l] at 161 175
\pinlabel $n$ [r] at 101 213
\pinlabel $-1$ [l] at 161 152
\pinlabel $-1$ [l] at 161 126
\pinlabel $-1$ [l] at 161 110
\pinlabel $-1$ [l] at 161 68
\pinlabel $-1$ [l] at 161 51
\pinlabel $m$ [r] at 101 89
\pinlabel $L_1$ [bl] at 123 16
\pinlabel $+1$ [l] at 161 13
\pinlabel $L_0$ [bl] at 252 290
\pinlabel $+1$ [r] at 214 279
\pinlabel $-1$ [r] at 214 250
\pinlabel $-1$ [r] at 214 234
\pinlabel $-1$ [r] at 214 192
\pinlabel $-1$ [r] at 214 175
\pinlabel $n$ [l] at 273 213
\pinlabel $-1$ [r] at 210 152
\pinlabel $-1$ [r] at 214 126
\pinlabel $-1$ [r] at 214 110
\pinlabel $-1$ [r] at 214 68
\pinlabel $-1$ [r] at 214 51
\pinlabel $m$ [l] at 273 89
\pinlabel $L_1$ [bl] at 225 16
\pinlabel $+1$ [l] at 260 13
\pinlabel $L_0$ [bl] at 353 290
\pinlabel $+1$ [r] at 315 279
\pinlabel $-1$ [r] at 315 250
\pinlabel $-1$ [r] at 315 234
\pinlabel $-1$ [r] at 315 192
\pinlabel $-1$ [r] at 315 175
\pinlabel $n$ [l] at 374 213
\pinlabel $-1$ [r] at 319 158
\pinlabel $-1$ [r] at 315 126
\pinlabel $-1$ [r] at 315 110
\pinlabel $-1$ [r] at 315 68
\pinlabel $-1$ [r] at 315 51
\pinlabel $m$ [l] at 374 89
\pinlabel $L_1$ [bl] at 326 16
\pinlabel $+1$ [l] at 361 13
\endlabellist
\centering
\includegraphics[scale=.85]{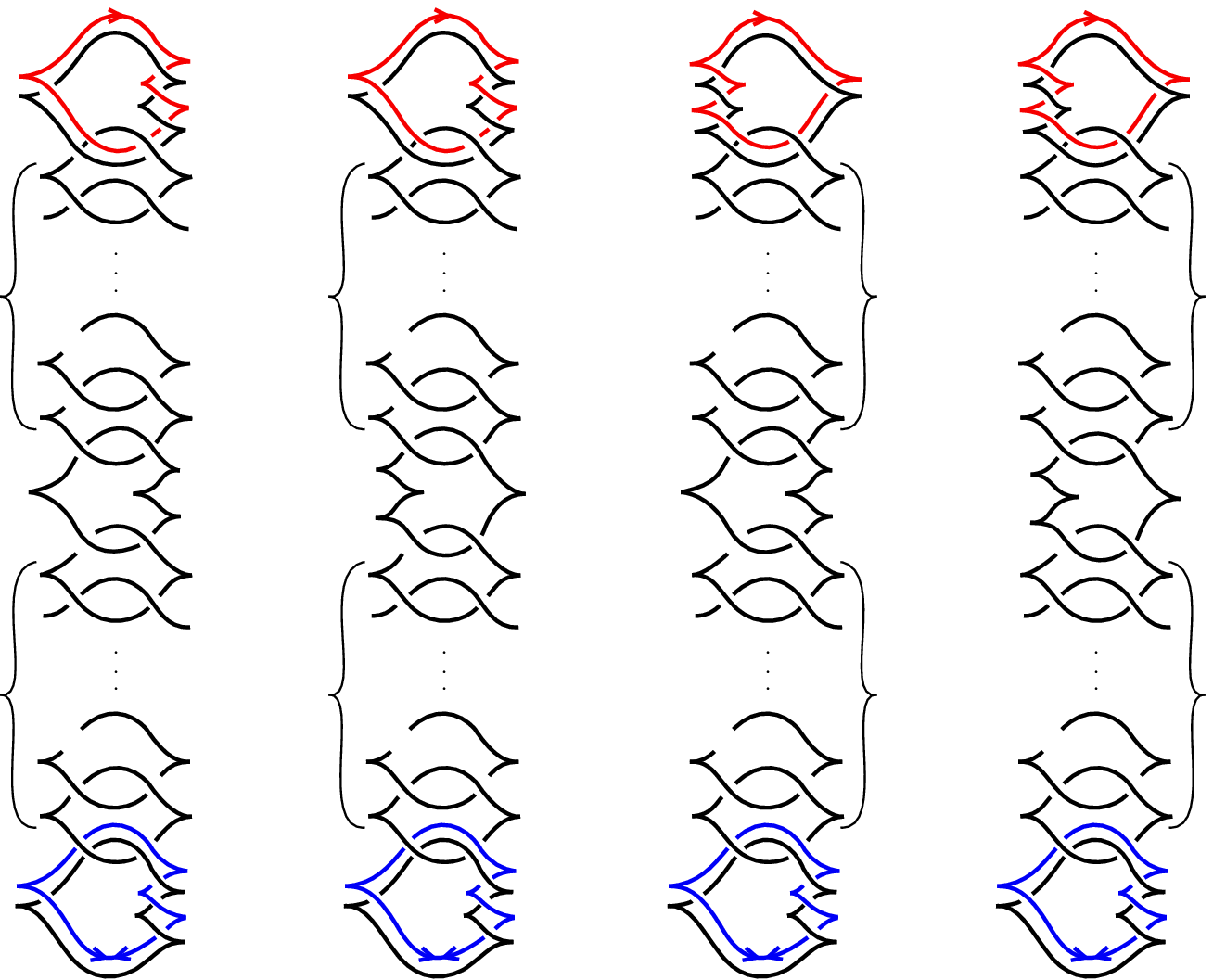}
  \caption{Case (c4): Legendrian Hopf links with $\ttt_0\geq\ttt_1\geq 3$.}
  \label{figure:c4}
\end{figure}

\begin{rem}
Beware that there is no direct correspondence between the four diagrams
in Figure~\ref{figure:c4} and the four subcases of (c4) listed
in Table~\ref{table:se-invariants}. Rather, the correspondence hinges on
the parity of $m$ and $n$. For instance, the realisations with
$d_3=-\frac{1}{2}$ are given by the first diagram (from the left)
for $m,n$ even, by the second for $n$ even and $m$ odd, the third for
$n$ odd and $m$ even, and the fourth if both $n$ and $m$ are odd.
The same caveat applies in the case~(c3).
\end{rem}

\subsubsection{Case (d)}
This is the case with $\ttt_0=0$. Exceptional Legendrian realisations
of the Hopf link with $\ttt_1\leq 0$
are shown in Figure~\ref{figure:d}, where $n\geq 2$. For the computation of
the invariants see Section~\ref{section:compute}. The link components
are loose by Theorem~\ref{thm:EF}. Examples with
$\ttt_1\geq 2$ are realised by Figure~\ref{figure:b1}
with $k=\ell=0$; all the arguments from case (b1) apply likewise
for this choice of $k,\ell$. Figure~\ref{figure:b2} from case (b2)
with $(k,\ell)\in\{(1,0),(0,1)\}$ gives the realisations with $\ttt_1=1$.

\begin{figure}[h]
\labellist
\small\hair 2pt
\pinlabel $L_1$ [br] at 26 136
\pinlabel $L_0$ [br] at 39 101
\pinlabel $+1$ [l] at 94 112
\pinlabel $+1$ [l] at 93 65
\pinlabel $+1$ [l] at 93 47
\pinlabel $+1$ [l] at 93 39
\pinlabel $+1$ [l] at 93 30
\pinlabel $n$ [r] at 0 49
\endlabellist
\centering
\includegraphics[scale=1.1]{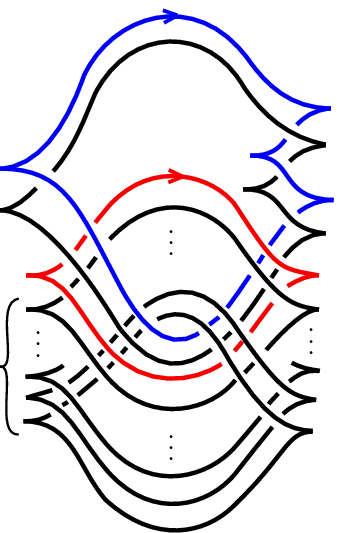}
  \caption{Case (d): Legendrian Hopf links with $\ttt_0=0$, $\ttt_1\leq 0$.}
  \label{figure:d}
\end{figure}
\subsubsection{Summary}
In Table~\ref{table:summary} we arrange the strongly exceptional cases
from Table~\ref{table:se-invariants} with $\ttt_0,\ttt_1\in\N$
a little more systematically.
In the second line there also belongs the case with $(\ttt_i,\ttr_i)=
(2,\pm 1)$.

\begin{table}
\begin{tabular}{|c|c|c|c|c|} \hline
$\ttt_0$ & $\ttr_0$        & $\ttt_1$ & $\ttr_1$        & $d_3$   \\ \hline
                                                                     \hline
$\geq 1$ & $\pm(\ttt_0+1)$ & $\geq 1$ & $\pm(\ttt_1+1)$ & $-1/2$  \\ \hline
$\geq 3$ & $\pm(\ttt_0-3)$ & $\geq 1$ & $\mp(\ttt_1-1)$ & $3/2$   \\ \hline
$\geq 3$ & $\pm(\ttt_0-1)$ & $\geq 2$ & $\mp(\ttt_1-3)$ & $3/2$   \\ \hline
$\geq 3$ & $\pm(\ttt_0-1)$ & $\geq 3$ & $\pm(\ttt_1-1)$ & $3/2$   \\ \hline
\end{tabular}
\label{table:summary}
\vspace{1.5mm}
\caption{Strongly exceptional realisations with $\ttt_0,\ttt_1\in\N$.}
\end{table}
\section{Contact structures on $S^3$ as contact cuts}
\label{section:cut}
In order to classify Legendrian Hopf links in $S^3$ where the contact
structure on the complement has non-zero twisting, it is useful
to describe certain contact structures on $S^3$ as
contact cuts in the sense of Lerman~\cite{lerm01}.
This construction (without a reference to Lerman) was used
in \cite{dyma04} to describe Legendrian knots and links
in~$S^3$; see also~\cite[Section~2.5]{voge18}. We provide
additional details and simplify the computation of
the classical invariants in these models of~$S^3$.

On $T^2\times[0,1]$ with coordinates $x,y\in\R/\Z$ and $z\in [0,1]$
we consider the contact forms
\begin{equation}
\label{eqn:alpha_p}
\alpha_p=\sin\Bigl(\frac{2p+1}{2}\pi z\Bigr)\,\rmd x+
\cos\Bigl(\frac{2p+1}{2}\pi z\Bigr)\,\rmd y,\;\;\; p\in\N_0.
\end{equation}
These contact forms are invariant under the $S^1$-actions
generated by $\partial_x$ and $\partial_y$. Along $\{z=0\}$,
the vector field $\partial_x$ is contained in $\ker\alpha_p$;
along $\{z=1\}$, we have $\partial_y\in\ker\alpha_p$.
By the contact cut construction \cite[Proposition~2.15]{lerm01},
the $1$-form $\alpha_p$ descends to a contact form on
the quotient manifold obtained by
collapsing the first $S^1$-factor in $S^1\times S^1\times\{0\}$,
and the second factor in $S^1\times S^1\times\{1\}$.
This quotient manifold $T^2\times[0,1]/\!\sim$ is easily seen to be~$S^3$,
since collapsing the circles in question is topologically equivalent
to attaching a copy
of a solid torus $S^1\times D^2$ to each boundary component
of $T^2\times [0,1]$, where the meridian $\{*\}\times\partial D^2$
is sent to the first or the second $S^1$-factor of~$T^2$, respectively.

Our aim in this section is to prove the following proposition.
Recall that $\xi_d$ denotes the overtwisted contact structure
on $S^3$ with $d_3(\xi_d)=d$.

\begin{prop}
\label{prop:cut}
The contact structure on $S^3=T^2\times[0,1]/\!\sim$ induced by the
contact form $\alpha_p$ equals
\begin{itemize}
\item[(i)] $\xist$ for $p=0$;
\item[(ii)] $\xi_{1/2}$ for $p$ odd;
\item[(iii)] $\xi_{-1/2}$ for $p\geq 2$ even.
\end{itemize}
\end{prop}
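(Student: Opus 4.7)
For part~(i), I will show $\ker\alpha_0=\xist$ directly. Both are $T^2$-invariant contact structures on $S^3=T^2\times[0,1]/{\sim}$ whose contact plane rotates monotonically through $\pi/2$, starting from $\mathrm{span}(\partial_x,\partial_z)$ at $z=0$ and ending at $\mathrm{span}(\partial_y,\partial_z)$ at $z=1$. Under the toric identification $z_1=\sqrt z\,\rme^{2\pi\rmi x}$, $z_2=\sqrt{1-z}\,\rme^{2\pi\rmi y}$, the $1$-form $\alpha=z\,\rmd x+(1-z)\,\rmd y$ descends to $(2\pi)^{-1}$ times the defining form~(\ref{eqn:xist}) of~$\xist$. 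A $T^2$-equivariant reparametrization of~$z$ matches the slope profiles of $\alpha_0$ and $\alpha$, giving the desired contactomorphism.

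For $p\geq 1$ I first exhibit an overtwisted disc. Consider
\[ D=\bigl\{(x,y_0,z):x\in S^1,\ z\in[0,2/(2p+1)]\bigr\}\subset T^2\times[0,1]/{\sim},\]
with $S^1_x\times\{y_0\}\times\{0\}$ collapsed; this is a smoothly embedded disc in~$S^3$. At its boundary level $z_0=2/(2p+1)$ we have $(2p+1)\pi z_0/2=\pi$, so $\alpha_p|_{z_0}=-\rmd y$ and $\ker\alpha_p|_{z_0}=\mathrm{span}(\partial_x,\partial_z)=TD|_{\partial D}$. Hence $\partial D$ is Legendrian, the contact framing coincides with the disc framing, and $\tb(\partial D)=0$, so $D$ is an overtwisted disc and $(S^3,\ker\alpha_p)$ is overtwisted.

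For the $d_3$-computation I argue that $\ker\alpha_p$ and $\ker\alpha_{p+2}$ are homotopic as oriented plane fields on~$S^3$. Each is $T^2$-invariant and given, via the oriented normal, by the map $[0,1]\to S^2$, $z\mapsto(\sin\theta(z),\cos\theta(z),0)$ with $\theta(z)=(2p+1)\pi z/2$. The endpoints $(0,1,0)$ and $((-1)^p,0,0)$ depend only on $p\bmod 2$, and the paths for $p$ and $p+2$ differ by a single full loop along the equator of~$S^2$. Since $\pi_1(S^2)=0$, this loop is null-homotopic rel endpoints; extending the homotopy $T^2$-equivariantly to $T^2\times[0,1]$ rel $T^2\times\{0,1\}$ and passing through the contact cut yields a homotopy on~$S^3$. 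Thus $d_3(\ker\alpha_p)$ depends only on $p\bmod 2$; for even~$p$ it equals $d_3(\xist)=-1/2$, and combined with overtwistedness for $p\geq 2$, Eliashberg's classification gives $\ker\alpha_p=\xi_{-1/2}$.

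For odd~$p$ I plan to identify $(S^3,\ker\alpha_1)$ with the $\pi$-Lutz twist of $(S^3,\xist)$ along the transverse unknot $C_0=S^1_y\times\{0\}/{\sim}$. After reparametrizing~$z$ to concentrate the excess $\pi$-rotation of $\alpha_1$ over $\alpha_0$ in a short interval $[0,\varepsilon]$, the $[0,\varepsilon]$-piece becomes the standard $\pi$-Lutz tube around~$C_0$, while the complement matches the $\alpha_0$-construction up to a co-orientation reversal at~$C_1$ that does not affect $d_3$. Since $C_0$ is a standard positive transverse Hopf link component, $\slk(C_0)=-1$, so the formula $h(\xi_K)=\slk(K)$ invoked in the proof of Lemma~\ref{lem:h-d3} yields $h(\ker\alpha_1)=-1$ and hence $d_3(\ker\alpha_1)=1/2$. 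Combined with the parity argument, this gives $\ker\alpha_p=\xi_{1/2}$ for all odd~$p$. The main technical obstacle is matching the reparametrized $\alpha_1$-neighbourhood of $C_0$ with the standard Lutz tube model precisely enough to invoke Lemma~\ref{lem:h-d3}'s formula.
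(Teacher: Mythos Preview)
Your argument is correct and differs from the paper's in one substantive respect. For part~(i) both you and the paper give direct identifications; yours via toric coordinates is more elementary than the paper's symplectic-reduction argument on $T^2\times\C\times\C^2$. The real difference is your parity step. The paper establishes $d_3(\ker\alpha_{p+1})$ from $d_3(\ker\alpha_p)$ by recognising $\ker\alpha_{p+1}$ as the $\pi$-Lutz twist of $\ker\alpha_p$ along~$C_1$ and computing $\slk_{\ker\alpha_p}(C_1)=(-1)^{p+1}$ for every~$p$, which in turn rests on an explicitly constructed global frame $X_p^1,X_p^2$ of $\ker\alpha_p$ on~$S^3$. Your homotopy argument --- contracting the extra equatorial loop via $\pi_1(S^2)=0$ --- bypasses that machinery and gives $d_3(\ker\alpha_p)=d_3(\ker\alpha_{p+2})$ directly. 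The one point you must make precise is that each intermediate $T^2$-invariant plane field, with normal $n(z)$ satisfying only $n_1(0)=0$ and $n_2(1)=0$, still descends smoothly across the cut loci; this does hold, since in disc coordinates with $r^2=\mu_p(z)$ the defining $1$-form $n_1\,\rmd x+n_2\,\rmd y+n_3\,\rmd z$ has smooth coefficients in $(u,v)$ and is nonzero at the core because $n_2(0)=1$. For the odd base case your Lutz twist along~$C_0$ works, but as you observe it forces a \emph{global} coorientation reversal (not just ``at~$C_1$''): the reparametrised $\alpha_1$ equals $-(\text{Lutz-twisted }\alpha_0)$ everywhere. The paper's choice of~$C_1$ avoids this, since $\alpha_0$ and $\alpha_1$ already agree near $z=0$. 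Your assertion $\slk_{\xist}(C_0)=-1$ is true but not justified; the paper proves it via the global frame, and that frame is reused in Section~\ref{section:twisting} to compute $\tb$ and $\rot$ of Legendrian torus knots in $\ker\alpha_p$, so the paper's longer route has additional payoff beyond this proposition.
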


Parts of the argument for proving this statement are contained in
Examples 2.16, 2.19 and the proof of Theorem~3.1 in~\cite{lerm01}.

\begin{rem}
\label{rem:tight}
On $T^2\times[0,1]$, the contact structures $\ker\alpha_p$ are tight,
since they embed into the standard tight contact structure on~$\R^3$,
cf.~\cite[Corollary~6.5.10]{geig08}.
\end{rem}
\subsection{A transverse Hopf link}
We begin with a topological preparation.
The circles
\[ C_0:=\Bigl\{\frac{1}{2}\Bigr\}\times S^1\times\{0\}\]
and
\[ C_1:=S^1\times\Bigl\{\frac{1}{2}\Bigr\}\times\{1\}\]
in $T^2\times [0,1]$ descend to unknots in
$S^3=T^2\times[0,1]/\!\sim$, which we continue to denote by $C_0,C_1$.
See Figure~\ref{figure:box-hopf-link}, where
$T^2$ is illustrated as $(\R/\Z)^2$. On the back face of the cube
$[0,1]^3$ the horizontal segments are collapsed; on the front face,
the vertical ones.

\begin{figure}[h]
\labellist
\small\hair 2pt
\pinlabel $x$ [t] at 320 47
\pinlabel $y$ [r] at 146 223
\pinlabel $z$ [tl] at 11 2
\pinlabel $C_0$ [l] at 221 143
\pinlabel $C_1$ [b] at 129 87
\endlabellist
\centering
\includegraphics[scale=.6]{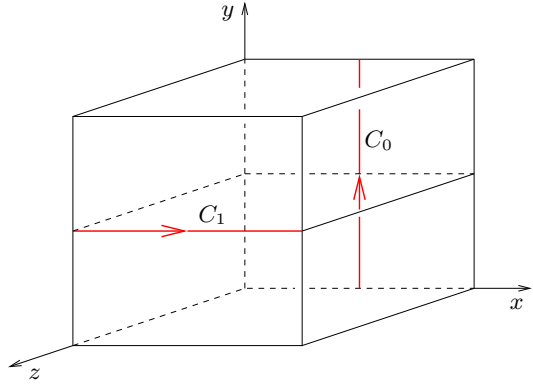}
  \caption{A transverse Hopf link in $(S^3,\ker\alpha_p)$.}
  \label{figure:box-hopf-link}
\end{figure}

\begin{lem}
The unknots $C_0,C_1$ form a positive Hopf link in~$S^3$.
The unknot $C_0$ is positively transverse to the contact structure
$\ker\alpha_p$ on~$S^3$; the unknot $C_1$ is positively or negatively
transverse to $\ker\alpha_p$, depending on $p$ being even or odd.
\end{lem}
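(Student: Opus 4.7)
The plan is to handle the topology and the contact-geometric claim in sequence, both via direct computation in the quotient model.

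For the topology, I would exhibit an explicit diffeomorphism
\[ \Phi\co T^2\times[0,1]/\!\sim\;\longrightarrow\; S^3\subset\C^2,\quad
\Phi(x,y,z)=\bigl(\sin(\pi z/2)\,\rme^{2\pi\rmi x},\,\cos(\pi z/2)\,\rme^{2\pi\rmi y}\bigr). \]
This descends to the quotient because at $z=0$ the first coordinate vanishes, making the map independent of~$x$ (matching the collapse of $x$-circles), and at $z=1$ the second coordinate vanishes, making it independent of $y$ (matching the collapse of $y$-circles). Under~$\Phi$, the curve $C_0$ maps to the Hopf fibre $\{u=0\}$ and $C_1$ to $\{v=0\}$, each oriented counter-clockwise in its own $\C$-factor. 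These two circles are by convention the standard positive Hopf link in~$S^3$; if desired, the linking number can be verified directly by taking $\{(u,v)\in S^3: v\in\R_{\geq 0}\}$ as a Seifert disc for $\{v=0\}$ and observing that it meets $\{u=0\}$ transversely in the single point $(0,1)$ with positive sign. An alternative, coordinate-free description is that $C_0$ and $C_1$ are the cores of the two solid tori glued in by the contact cut, so they form the cores of a genus-one Heegaard splitting of~$S^3$, which is always a Hopf link.

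For the transverse orientation claim, I would simply evaluate $\alpha_p$ on the tangent of each $C_i$. Along $C_0$, at $z=0$,
\[ \alpha_p|_{z=0}=\sin(0)\,\rmd x+\cos(0)\,\rmd y=\rmd y, \]
so $\alpha_p(\partial_y)=1>0$, giving positive transversality. Along $C_1$, at $z=1$, the identities $\sin\bigl(\tfrac{2p+1}{2}\pi\bigr)=(-1)^p$ and $\cos\bigl(\tfrac{2p+1}{2}\pi\bigr)=0$ yield
\[ \alpha_p|_{z=1}=(-1)^p\,\rmd x, \]
whence $\alpha_p(\partial_x)=(-1)^p$. Thus $C_1$ is positively transverse precisely when $p$ is even and negatively transverse when $p$ is odd, as claimed.

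The only point that deserves a brief verification is that these computations pass to the quotient: since $\rmd y$ at $z=0$ is invariant under the $\partial_x$-action being collapsed, it descends to a well-defined $1$-form on a neighbourhood of $C_0$ in~$S^3$, and the tangent vector $\partial_y|_{C_0}$ descends to the tangent vector of the quotient curve; the same holds at $z=1$ with the roles of $x$ and $y$ swapped. This is guaranteed abstractly by Lerman's contact cut construction, so no real obstacle arises—the proof is essentially a matter of unwinding the definitions.
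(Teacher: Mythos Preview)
Your proof is correct. The transversality computation is exactly what the paper means by ``obvious from the definition of~$\alpha_p$'', and your topological argument, once pulled back through your diffeomorphism~$\Phi$, reduces to the same Seifert-disc computation the paper uses: the preimage of your disc $\{v\in\R_{\geq 0}\}$ under $\Phi$ is the square $[0,1]\times\{0\}\times[0,1]/\!\sim$, which is just a $y$-translate of the paper's $[0,1]\times\{\frac{1}{2}\}\times[0,1]/\!\sim$. The paper's version is more economical in that it never leaves the quotient model, whereas your detour through $\C^2$ buys you the reassurance of landing on the textbook Hopf link, at the cost of having to check (or at least believe) that $\Phi$ is an orientation-preserving diffeomorphism.
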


\begin{proof}
The horizontal square $[0,1]\times\bigl\{\frac{1}{2}\bigr\}\times[0,1]$ in
Figure~\ref{figure:box-hopf-link} descends to a $2$-disc
in $S^3$ with boundary~$C_1$, and the unknot $C_0$ intersects
this disc positively in a single point. The transversality property
of $C_0,C_1$ is obvious from the definition of~$\alpha_p$.
\end{proof}
\subsection{The standard contact structure}
Consider the $8$-dimensional manifold
\[ W=T^2\times\C\times\C^2\]
with $S^1$-valued coordinates $q_1,q_2$ on $T^2=(\R/2\pi\Z)^2$,
Cartesian coordinates $p_1+\rmi p_2$
on $\C\setminus\{0\}$, and polar coordinates $(r_i,\varphi_i)$
on the last two $\C$-factors. Define a $1$-form
\[ \lambda=p_1\,\rmd q_1+p_2\,\rmd q_2+\frac{1}{2}r_1^2\,\rmd\varphi_1
+r_2^2\,\rmd\varphi_2\]
on $W$. Notice that the first two summands of $\alpha$ define
the canonical Liouville $1$-form on $T^2\times\C$,
regarded as the unit cotangent bundle of $T^2$.
The $2$-form $\omega=\rmd\lambda$ is a symplectic form on~$W$.

The vector field
\[ Y=p_1\partial_{p_1}+p_2\partial_{p_2}+\frac{1}{2}r_1\partial_{r_1}
+\frac{1}{2}r_2\partial_{r_2}\]
is a Liouville vector field for~$\omega$, i.e.\ $L_Y\omega=\omega$.
The vector fields
\[ X_i=\partial_{q_i}-\partial_{\varphi_i},\;\;\; i=1,2,\]
are Hamiltonian vector fields for~$\omega$, corresponding to the
Hamiltonian functions $p_i-r_i^2/2$, $i=1,2$. The three
vector fields $X_1,X_2,Y$ commute pairwise with each other.

The Hamiltonian $T^2$-action generated by $(X_1,X_2)$ has the
momentum map
\[ \mu(q_1,q_2,p_1,p_2,r_1,\varphi_1,r_2,\varphi_2)=
\Bigl(p_1-\frac{1}{2}r_1^2,\,p_2-\frac{1}{2}r_2^2\Bigr).\]
The level set $\mu^{-1}(0,0)$ is regular, hence a $6$-dimensional manifold,
and the induced $T^2$-action on this manifold is free. A transversal
to the $T^2$-action on $\mu^{-1}(0,0)$ is defined by
\[ \bigl\{ q_1=\varphi_1,\, q_2=\varphi_2\}\subset\mu^{-1}(0,0).\]
This shows that the reduced manifold $\mu^{-1}(0,0)/T^2$
is the symplectic manifold
\[ \bigl(\C^2,r_1\,\rmd r_1\wedge\rmd\varphi_1+
r_2\,\rmd r_2\wedge\rmd\varphi_2\bigr).\]
The Liouville vector field $Y$ is tangent to the level set $\mu^{-1}(0,0)$
and descends as a Liouville vector field $\overline{Y}$ to the symplectic
quotient~$\C^2$. On the hypersurface
\[ \Bigl\{ \frac{1}{4}(r_1^4+r_2^4)=1\Bigl\}\subset\C^2, \]
which is a diffeomorphic copy of~$S^3$ transverse to~$\overline{Y}$,
this Liouville vector field induces a contact form defining~$\xist$.

The $\C$-component $-\partial_{\varphi_i}$ of $X_i$ defines
a free $S^1$-action, except at the origin $r_i=0$. It follows that
one equally obtains the contact structure $\xist$ on $S^3$ 
from the contact manifold
\[ \bigl\{ p_1^2+p_2^2=1,\, p_1,p_2\geq 0\bigr\}\subset T^2\times\C,\] 
with contact form $p_1\,\rmd q_1+p_2\,\rmd q_2$, by taking the
quotient of the boundary components $\{p_1=0\}$, $\{p_2=0\}$ under
the $S^1$-action defined by $\partial_{q_1}$
and~$\partial_{q_2}$, respectively. This is exactly the
description of the contact structure on $S^3$ as a quotient
of $\bigl(T^2\times[0,1],\alpha_0\bigr)$ given before
Proposition~\ref{prop:cut}, and hence proves part (i) of that
proposition.
\subsection{A global frame}
In order to determine the contact structures $\ker\alpha_p$
on $S^3$ for $p\geq 1$, and for computing the classical invariants
of Legendrian and transverse knots in these contact structures,
we now exhibit a global frame for the contact planes
$\ker\alpha_p$ on~$S^3$.

A global frame for $\ker\alpha_p$ on $T^2\times[0,1]$ is given by
\begin{equation}
\label{eqn:frame0}
\partial_z\;\;\;\text{and}\;\;\;
X_p:=\cos\Bigl(\frac{2p+1}{2}\pi z\Bigr)\,\partial_x-
\sin\Bigl(\frac{2p+1}{2}\pi z\Bigr)\,\partial_y.
\end{equation}
This frame does not, however, descend to~$S^3$.
The differential of $\alpha_p$ is
\[ \rmd\alpha_p=\frac{2p+1}{2}\pi\cos\Bigl(\frac{2p+1}{2}\pi z\Bigr)\,
\rmd z\wedge\rmd x-
\frac{2p+1}{2}\pi\sin\Bigl(\frac{2p+1}{2}\pi z\Bigr)\;\rmd z\wedge\rmd y,\]
so the frame $\partial_z, X_p$ is positive for the orientation of
$\ker\alpha_p$ defined by this $2$-form. Whenever we speak of a frame
for $\ker\alpha_p$, this orientation assumption will be understood.

Remember that at $z=0$, where $X_p=\partial_x$,
we collapse the $x$-circles in~$T^2$.
Figure~\ref{figure:frame-cut} shows such a circle, with the
$z$-direction pointing to the exterior. Performing the cut
is topologically the same as filling in the circle with a disc.
So the frame that extends is the one that does not rotate as one
goes along the circle, for instance
\begin{equation}
\label{eqn:frame1}
\cos(2\pi x)\,\partial_z-\sin(2\pi x)\,X_p\;\;\;\text{and}\;\;\;
\sin(2\pi x)\,\partial_z+\cos(2\pi x)\, X_p.
\end{equation}

\begin{figure}[h]
\labellist
\small\hair 2pt
\pinlabel $\partial_z$ [t] at 452 250
\pinlabel $\partial_x$ [l] at 401 310
\pinlabel $\partial_z$ [tl] at 397 393 
\pinlabel $\partial_x$ [bl] at 318 399
\endlabellist
\centering
\includegraphics[scale=.4]{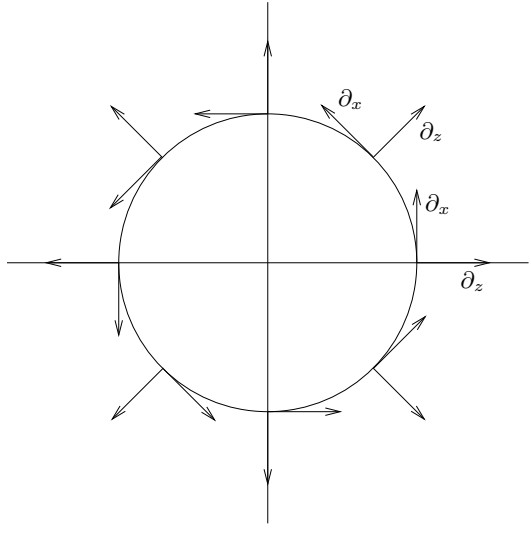}
  \caption{Finding the frame that descends to the cut manifold.}
  \label{figure:frame-cut}
\end{figure}

At $z=1$ we collapse the $y$-circles. Here $X_p$ equals
$-\partial_y$ or $\partial_y$, depending on $p$ being even or odd.
Arguing as above, while observing that $\partial_z$ now points
into the $2$-disc over which we want to extend the frame,
we find that we may work with the frame
\begin{equation}
\label{eqn:frame2}
\cos(2\pi y)\,\partial_z-(-1)^p\sin(2\pi y)\,X_p\;\;\;\text{and}\;\;\;
(-1)^p\sin(2\pi y)\,\partial_z+\cos(2\pi y)\, X_p.
\end{equation}

The extendability of the frames (\ref{eqn:frame1}) and (\ref{eqn:frame2})
is not affected if we rotate the first one along the
$y$-direction, or the second one along the $x$-direction. This proves
the following lemma.

\begin{lem}
\label{lem:frame}
A global positive frame for the contact structure $\ker\alpha_p$
on $S^3$ is given by
\[ X_p^1:=\cos\bigl(2\pi(x+(-1)^py)\bigr)\,\partial_z-
\sin\bigl(2\pi(x+(-1)^py)\bigr)\,X_p\]
and
\[ X_p^2:=\sin\bigl(2\pi(x+(-1)^py)\bigr)\,\partial_z+
\cos\bigl(2\pi(x+(-1)^py)\bigr)\,X_p,\]
where $X_p$ is defined in~(\ref{eqn:frame0}).
\qed
\end{lem}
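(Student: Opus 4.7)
The plan is to verify three things in sequence: that $(X_p^1, X_p^2)$ is a smooth positively oriented frame of $\ker\alpha_p$ on the interior $T^2\times(0,1)$, and then that its restriction to each boundary agrees, up to an extendable modification, with the frame~(\ref{eqn:frame1}) at $z=0$ and with the frame~(\ref{eqn:frame2}) at $z=1$ that the preceding discussion has shown to descend across the cuts.

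The first check is essentially automatic. By inspection, $(X_p^1, X_p^2)$ is the image of the positive frame $(\partial_z, X_p)$ of~(\ref{eqn:frame0}) under the in-plane rotation through the angle $\phi(x,y):=2\pi\bigl(x+(-1)^py\bigr)$, which is independent of~$z$. Orientation-preserving rotations inside $\ker\alpha_p$ preserve positivity, so $(X_p^1, X_p^2)$ is a smooth positive frame on the entire cylinder. Turning to $z=0$, where $X_p=\partial_x$, I would observe that $(X_p^1, X_p^2)|_{z=0}$ is precisely the result of composing the frame~(\ref{eqn:frame1}) with the additional in-plane rotation through the angle $\phi(x,y)-2\pi x=(-1)^p\cdot 2\pi y$. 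Since this extra rotation depends only on~$y$, the parameter of the core circle at $z=0$, and not on~$x$, the direction along the collapsed circles, it merely rotates the already extendable frame~(\ref{eqn:frame1}) smoothly along the core circle, which is compatible with the cut.

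For the boundary $z=1$ I would first use the identities $\cos\bigl(\tfrac{2p+1}{2}\pi\bigr)=0$ and $\sin\bigl(\tfrac{2p+1}{2}\pi\bigr)=(-1)^p$ to obtain $X_p|_{z=1}=(-1)^{p+1}\partial_y$. Substituting this into the expressions for $X_p^1$ and $X_p^2$ and comparing with~(\ref{eqn:frame2}) then shows, after a short trigonometric computation treating $p$ even and odd separately, that the two frames differ by an in-plane rotation through angle $\pm 2\pi x$, the sign depending on the parity of~$p$. Since $x$ parametrises the core circle at $z=1$, this is again a rotation of the type permitted by the cut, so extendability across $z=1$ holds as well. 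Combining the two boundary checks, $(X_p^1, X_p^2)$ descends to a global positive frame of $\ker\alpha_p$ on~$S^3$.

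The main obstacle is purely bookkeeping: keeping the parity-dependent signs consistent at the two ends. Conceptually, the angle $\phi$ has been engineered precisely so that at each boundary the residual rotation depends only on the core-circle parameter, and nothing deeper is required.
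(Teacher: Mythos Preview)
Your proposal is correct and follows essentially the same approach as the paper: the lemma carries a \qed\ because its proof is contained in the preceding discussion, whose final sentence (``The extendability of the frames (\ref{eqn:frame1}) and (\ref{eqn:frame2}) is not affected if we rotate the first one along the $y$-direction, or the second one along the $x$-direction'') is exactly the observation you spell out in your two boundary checks. One small slip: the residual rotation at $z=1$ is by $+2\pi x$ regardless of parity (the parity-dependent signs in the angle $\phi$ and in~(\ref{eqn:frame2}) cancel), not $\pm 2\pi x$; this is harmless for the argument.
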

\subsection{The $d_3$-invariant of $\ker\alpha_p$}
There are two ways to compute $d_3(\ker\alpha_p)$. One can
directly compute the Hopf invariant of $\ker\alpha_p$, or one
can interpret $\ker\alpha_p$ as the contact structure obtained
via a suitable $\pi$-Lutz twist from $\ker\alpha_{p-1}$.

For the first approach, we observe that the Reeb vector field
of $\alpha_p$ on $T^2\times[0,1]$ is
\[ R_p=\sin\Bigl(\frac{2p+1}{2}\pi z\Bigr)\,\partial_x+
\cos\Bigl(\frac{2p+1}{2}\pi z\Bigr)\,\partial_y.\]
This vector field descends as the Reeb vector field
of the contact form on $S^3$ obtained via the cut construction.

As in Section~\ref{section:contS3} we normalise the Hopf invariant $h$
such that $h(\ker\alpha_0)=h(\xist)=0$, in other words, we compute the
Hopf invariant of the Gau{\ss} map of a tangent $2$-plane field
with respect to the trivialisation of the tangent bundle $TS^3$
defined by $R_0,X_0^1,X_0^2$.

The Gau{\ss} map of $\ker\alpha_p$ is the map $S^3\rightarrow S^2$
given by expressing $R_p$ in terms of the frame $R_0,X_0^1,X_0^2$.
A straightforward computation yields
\begin{equation}
\label{eqn:Rp}
R_p=\cos(p\pi z)R_0+\sin(p\pi z)\Bigl(\cos\bigl(2\pi (x+y)\bigr)X_0^2-
\sin\bigl(2\pi(x+y)\bigr)X_0^1\Bigr).
\end{equation}
The preimage $R_1^{-1}(s)$ of a regular value $s\in S^2$
is a collection of circles, oriented in such a way that
the transverse orientation coincides with the orientation of~$S^2$.

\begin{lem}
The values $\pm X_2^0\in S^2$ are regular for the map $R_1\co S^3\rightarrow
S^2$. The preimage $R_1^{-1}(X_2^0)$ is the oriented circle
\[ \gamma_+\co t\longmapsto (x,y,z)=\Bigl(1-t,t,\frac{1}{2}\Bigr);\]
the preimage $R_1^{-1}(-X_2^0)$ equals
\[ \gamma_-\co t\longmapsto (x,y,z)=\Bigl(\frac{1}{2}-t,t,\frac{1}{2}\Bigr).\]
\end{lem}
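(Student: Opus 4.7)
The plan is to substitute $p=1$ into formula~(\ref{eqn:Rp}) and read off, using the global orthonormal frame $\{R_0, X_0^1, X_0^2\}$ provided by Lemma~\ref{lem:frame} together with the Reeb vector field of $\alpha_0$, that
\[ R_1 = \cos(\pi z)R_0 - \sin(\pi z)\sin\bigl(2\pi(x+y)\bigr)X_0^1 + \sin(\pi z)\cos\bigl(2\pi(x+y)\bigr)X_0^2. \]
The equation $R_1 = \pm X_0^2$ as points of $S^2$ is then equivalent to the system $\cos(\pi z) = 0$, $\sin(2\pi(x+y)) = 0$, and $\cos(2\pi(x+y)) = \pm 1$. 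For $z \in [0,1]$ the first equation forces $z = 1/2$, and the remaining conditions reduce to $x+y \equiv 0 \pmod{1}$ in the $+$ case and $x+y \equiv 1/2 \pmod{1}$ in the $-$ case.

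Since both preimages sit at the interior level $z = 1/2$, the cut identifications at $z=0$ and $z=1$ leave them untouched, so they embed into $S^3$ as smooth circles, and the explicit parametrizations reproduce exactly $\gamma_+(t) = (1-t, t, 1/2)$ and $\gamma_-(t) = (1/2 - t, t, 1/2)$. Regularity follows by differentiating the coefficient map
\[ \mathbf{f}(x,y,z) = \bigl(\cos(\pi z),\, -\sin(\pi z)\sin(2\pi(x+y)),\, \sin(\pi z)\cos(2\pi(x+y))\bigr) \]
at any point of either preimage: one computes $\partial_z\mathbf{f} = (-\pi, 0, 0)$ and $\partial_x\mathbf{f} = \partial_y\mathbf{f} = (0, \mp 2\pi, 0)$, which together span the tangent plane $T_{\pm X_0^2}S^2$. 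Hence $dR_1$ has rank two at every point of the preimage, and $\pm X_0^2$ are regular values.

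It remains to verify that the parametrizations given carry the correct orientation, namely the one for which a positive transverse frame to $\gamma_\pm$ in $S^3$ is sent by $dR_1$ to a positive frame of $T_{\pm X_0^2}S^2$ (with the orientation induced from the ambient frame $\{R_0, X_0^1, X_0^2\}$). This reduces to a sign check obtained by applying the computation of $d\mathbf{f}$ above to the normal frame $(\partial_x + \partial_y, \partial_z)$ to $\gamma_\pm$, and comparing with $\gamma_\pm'(t) = -\partial_x + \partial_y$. The main obstacle in the argument is keeping the orientation bookkeeping consistent between the ambient orientations of $T^2 \times [0,1]$, $S^3$, and $S^2$; everything else is a direct computation once Lemma~\ref{lem:frame} is in hand.
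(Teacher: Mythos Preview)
Your approach is essentially the same as the paper's: both read off the preimages as sets directly from formula~(\ref{eqn:Rp}), and both determine the orientation by examining how $R_1$ varies in the directions transverse to $\gamma_\pm$. Your Jacobian computation of $\partial_z\mathbf{f}$ and $\partial_x\mathbf{f}$ is exactly the infinitesimal version of the paper's argument, which instead phrases things geometrically by tracking the $R_0$- and $X_0^1$-components of $R_1$ as one moves along a small meridional circle of~$\gamma_\pm$.

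The one place where your proposal falls short is the orientation verification: you correctly identify that it reduces to a sign check against the normal frame, but you do not actually carry it out, and your closing sentence concedes that the bookkeeping is the ``main obstacle''. The paper does complete this step. Concretely, at $X_0^2\in S^2$ the positive tangent frame is $(R_0,X_0^1)$; from your own computation $d\mathbf{f}(\partial_z)=(-\pi,0,0)$ and $d\mathbf{f}(\partial_x)=(0,-2\pi,0)$ along~$\gamma_+$, so $(-\partial_z,-\partial_x)$ maps positively to $(R_0,X_0^1)$, and one then checks that $(\gamma_+'(t),-\partial_z,-\partial_x)=(-\partial_x+\partial_y,-\partial_z,-\partial_x)$ is a positive frame for $T^2\times[0,1]$. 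This is the content of the paper's meridional-circle argument, and without it the lemma is not fully proved.
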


\begin{proof}
From (\ref{eqn:Rp}) it is clear that the preimages
$R_1^{-1}(\pm X_2^0)$, as sets, are the described circles.
To find the orientation of these circles, one considers the behaviour
of $R_1$ along a small meridional circle of~$\gamma_{\pm}$. For~$\gamma_+$,
at a point on this meridional circle where $z<1/2$, the vector field
$R_1$ has a positive $R_0$-component; for $z>1/2$ a negative one.
For $z=1/2$ and $x+y$ a little larger (resp.\ smaller) than~$1$,
the vector field $R_1$ has a negative (resp.\ positive) $X_0^1$-component.
Since the transverse orientation at $X_2^0\in S^2$ is given by
the oriented frame $R_0,X_0^1$, this gives the claimed orientation
of~$\gamma_+$. For $\gamma_-$ the argument is analogous.
\end{proof}

\begin{proof}[First proof of Proposition~\ref{prop:cut}]
By pushing $\gamma_-$ to $z=0$ and $\gamma_+$ to $z=1$, we see that
the oriented link $\gamma_-\sqcup\gamma_+$ is isotopic
to~$C_0\sqcup -C_1$, hence
\[ h(\ker\alpha_1)=\lk(\gamma_-,\gamma_+)=\lk(C_0,-C_1)=-1.\]

For the $\alpha_p$ with $p>1$ one can argue similarly, but see also
the alternative proof below.
\end{proof}

For the second proof of Proposition~\ref{prop:cut} we compute the
self-linking number of the transverse unknots $C_0,C_1$.

\begin{lem}
\label{lem:slk}
The self-linking number of the transverse unknot $C_i$, $i=0,1$,
in $(S^3,\ker\alpha_p)$ is $\slk_{\ker\alpha_p}(C_i)=(-1)^{p+1}$.
\end{lem}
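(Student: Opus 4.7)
The plan is to compute $\slk(C_i)$ directly from the definition: for a transverse knot $C$ bounding a Seifert surface $\Sigma$, $\slk(C) = \lk(C, C')$, where $C'$ is the push-off of $C$ along any nowhere-vanishing section of $\xi|_\Sigma$. By Lemma~\ref{lem:frame} the vector field $X_p^1$ lies in $\ker\alpha_p$ and is nowhere zero on all of $S^3$, so it trivially restricts to such a section on any Seifert surface.

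First I would fix explicit Seifert discs: the square $\{1/2\}\times S^1\times[0,1]\subset T^2\times[0,1]$ descends under the quotient to an embedded disc $\Sigma_0$ in $S^3$ with $\partial\Sigma_0=C_0$, since the $y$-circles at $z=1$ collapse to a point. Analogously, $S^1\times\{1/2\}\times[0,1]$ descends to a disc $\Sigma_1$ with $\partial\Sigma_1=C_1$. Next, using $X_p|_{z=0}=\partial_x$ and $X_p|_{z=1}=(-1)^{p+1}\partial_y$, I would evaluate $X_p^1$ along each $C_i$. For $C_0$ this gives $X_p^1|_{C_0(y)}=-\cos(2\pi(-1)^p y)\,\partial_z+\sin(2\pi(-1)^p y)\,\partial_x$, so the push-off in the chart is the curve $y\mapsto(1/2+\varepsilon\sin\phi(y),\,y,\,-\varepsilon\cos\phi(y))$ with $\phi(y)=2\pi(-1)^p y$. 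Finally I would locate $C_i'\cap\Sigma_i$ and sum signs.

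For $C_0$ the condition $x=1/2$ forces $\sin\phi=0$, i.e.\ $y\in\{0,1/2\}$, but only $y=1/2$ yields $z=\varepsilon>0$, giving a single intersection point $(1/2,1/2,\varepsilon)$. Comparing the tangent $(-2\pi(-1)^p\varepsilon,\,1,\,0)$ with the oriented normal $\partial_x$ of $\Sigma_0$ (obtained from orienting $C_0$ by $\partial_y$, together with the outward-normal-first convention on $\partial\Sigma_0$ and the ambient orientation $\partial_x\wedge\partial_y\wedge\partial_z$ of $S^3$) produces sign $(-1)^{p+1}$. The case of $C_1$ is entirely analogous and yields the same sign from a unique intersection at $(0,1/2,1-\varepsilon)$.

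The main obstacle I anticipate is not the intersection count itself but confirming that those portions of the push-off curve that drift \emph{out} of the chart $T^2\times[0,1]$, i.e.\ into the interiors of the attached solid tori $V_0$ or $V_1$, produce no further intersections with the respective Seifert disc. Since $\Sigma_i\cap V_i=C_i$ by construction, one checks that every such phantom candidate misses $\Sigma_i$ entirely. With this observation and careful orientation bookkeeping in place, the single-point signed counts deliver the formula $\slk_{\ker\alpha_p}(C_i)=(-1)^{p+1}$.
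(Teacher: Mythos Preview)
Your approach is essentially the paper's: restrict the global section $X_p^1$ to $C_i$, push off, and count signed intersections with the obvious Seifert disc. Your explicit sign computation is correct and simply fills in what the paper leaves as ``from this one finds the claimed self-linking number''.

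The one point where you are less clean than the paper is the boundary handling. In the contact-cut description there is no ``$z<0$'' region into which the push-off can drift; the quotient $T^2\times[0,1]/\!\sim$ \emph{is} $S^3$, with $T^2\times\{0\}$ collapsing to $C_0$ itself, so your formula $(1/2+\varepsilon\sin\phi,\,y,\,-\varepsilon\cos\phi)$ is not literally meaningful at $y=0$, and the claim ``$\Sigma_i\cap V_i=C_i$'' conflates two different decompositions of~$S^3$. The paper sidesteps this by first pushing $C_0$ slightly in the $z$-direction to $z=\delta>0$ (a transverse isotopy preserving~$\slk$); then for $0<\varepsilon<\delta$ the push-off stays inside $T^2\times(0,1)$, the candidate at $y=0$ lands at $z=\delta-\varepsilon<\delta$ and hence outside the Seifert disc $\{1/2\}\times S^1\times[\delta,1]/\!\sim$, and only your intersection at $y=1/2$ survives. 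With that adjustment your argument is complete.
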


\begin{proof}
The self-linking number $\slk(K)$ of a homologically trivial transverse
knot $K$ is computed as the linking number $\lk (K,K')$  with a push-off $K'$
in the direction of a non-vanishing section of the contact structure
(over a Seifert surface of~$K$ if the contact structure is not
globally trivial as a $2$-plane bundle).

Along $C_0$ in $T^2\times[0,1]$ we have
\[ X_p^1=\cos\bigl(\pi+(-1)^p2\pi y\bigr)\,\partial_z-
\sin\bigl(\pi+(-1)^p2\pi y\bigr)\,\partial_x.\]
From this one finds the claimed self-linking number as the
intersection number of the push-off $C_0'$ with the
disc $\{1/2\}\times[0,1]\times[0,1]/\!\sim$. One may push $C_0$
a little into the $z$-direction if one worries about $C_0$
sitting in the boundary of $T^2\times[0,1]$ that is being partially
collapsed.

Along $C_1$ we have
\[ X_p^1=\cos\bigl((-1)^p 2\pi x+\pi\bigr)\,\partial_z+
\sin\bigl((-1)^p 2\pi x+\pi\bigr)\,\partial_y,\]
and one then computes similarly.
\end{proof}

\begin{proof}[Second proof of Proposition~\ref{prop:cut}]
We already know that $\ker\alpha_0=\xist$, with Hopf
invariant $h(\ker\alpha_0)=0$.
The contact structure $\ker\alpha_{p+1}$ on $S^3$ is obtained
from $\ker\alpha_p$ by a $\pi$-Lutz twist along~$C_1$.
As in the proof of Lemma~\ref{lem:h-d3} this yields
\[ h(\ker\alpha_{p+1})=h(\ker\alpha_p)+\slk_{\ker\alpha_p}(C_1)=
h(\ker\alpha_p)+(-1)^p.\]
With Lemma~\ref{lem:slk} we find $h(\ker\alpha_p)=0$ for $p$ even,
and $h(\ker\alpha_p)=-1$ for $p$ odd. Proposition~\ref{prop:cut}
now follows with Lemma~\ref{lem:h-d3}.
\end{proof}
\section{Hopf links with twisting complement}
\label{section:twisting}
We now describe Legendrian Hopf links in $S^3$ equipped with
one of the contact structures $\ker\alpha_p$ from Section~\ref{section:cut}.
As we shall see, any Hopf link whose complement
has positive twisting can be accounted for in this way.
\subsection{Examples of torus knots}
Recall the definition of the contact form $\alpha_p$ on
$T^2\times [0,1]$ given in~(\ref{eqn:alpha_p}).
On the torus $T^2\times\{z\}$ this induces a non-singular
linear characteristic foliation
$\bigl(T^2\times\{z\}\bigr)_{\ker\alpha_p}$
of slope $-\tan\bigl((2p+1)\pi z/2\bigr)$.
We choose $z\in[0,1]$ such that this slope is a rational
number $b/a$ (including~$\infty$), with $a, b$ coprime.
We then have two Legendrian knots
\[ \beta_{\pm}^{a,b}\co t\longmapsto (\pm at,\pm bt,z),\;\;\; t\in [0,1],\]
differing only in orientation. Regarded as knots in~$S^3$,
these are $(a,b)$-torus knots.

\begin{lem}
The classical invariants of $\beta_{\pm}^{a,b}$ in $(S^3,\ker\alpha_p)$ are
$\tb(\beta_{\pm}^{a,b})=ab$ and
$\rot(\beta_{\pm}^{a,b})=\pm\bigl(a+(-1)^p b\bigr)$.
\end{lem}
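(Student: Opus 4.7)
\medskip

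\noindent\textbf{Proof proposal.}
The plan is to compute both invariants directly from the explicit contact form $\alpha_p$ on $T^2\times[0,1]$, using the local frame $(\partial_z,X_p)$ for $\ker\alpha_p$ and then transferring to the global frame $(X_p^1,X_p^2)$ provided by Lemma~\ref{lem:frame}. First I would verify that $\beta_{\pm}^{a,b}$ is indeed Legendrian at the chosen height: writing $\theta:=(2p+1)\pi z/2$, the tangent vector $\pm(a\partial_x+b\partial_y)$ is annihilated by $\alpha_p=\sin\theta\,\rmd x+\cos\theta\,\rmd y$ precisely because $b/a=-\tan\theta$. In fact the tangent vector equals $\pm(a/\cos\theta)X_p$, i.e.\ it is a scalar multiple of $X_p$, so the contact plane along the knot is spanned by $\partial_z$ and the tangent direction.

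For the Thurston--Bennequin invariant, I would use the Legendrian push-off of $\beta_{\pm}^{a,b}$ in the $\partial_z$-direction, which lies in $\ker\alpha_p$ and so realises the contact framing. Since this push-off sits on the nearby parallel torus $T^2\times\{z+\varepsilon\}$, the contact framing coincides with the framing induced by $T^2\times\{z\}$ inside $S^3=T^2\times[0,1]/\!\sim$. Because the two solid tori arising from the cuts at $z=0$ and $z=1$ assemble $S^3$ in the standard genus-one Heegaard splitting, $T^2\times\{z\}$ is a standardly embedded Heegaard torus and $\beta_{\pm}^{a,b}$ is the standard $(a,b)$-torus knot with respect to the meridian-longitude pair $(\mu_0,\lambda_0)$ fixed in Section~\ref{section:link-complement}. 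For such a torus knot the surface framing equals $ab$ (the intersection of the curve $a\mu_0+b\lambda_0$ with a parallel copy of itself on the Heegaard torus, computed via the standard genus-one Heegaard structure of $S^3$), and reversing orientation of the knot leaves this number unchanged. Hence $\tb(\beta_{\pm}^{a,b})=ab$.

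For the rotation number I would trivialise $\ker\alpha_p$ over a Seifert surface of $\beta_{\pm}^{a,b}$ using the global frame $(X_p^1,X_p^2)$ from Lemma~\ref{lem:frame}; since $H^2(S^3)=0$, this global trivialisation certainly extends over any Seifert surface, so the rotation number is the winding number of the tangent vector with respect to $(X_p^1,X_p^2)$. The explicit formulas
\begin{align*}
X_p^1 &= \cos\bigl(2\pi(x+(-1)^p y)\bigr)\,\partial_z-\sin\bigl(2\pi(x+(-1)^p y)\bigr)\,X_p,\\
X_p^2 &= \sin\bigl(2\pi(x+(-1)^p y)\bigr)\,\partial_z+\cos\bigl(2\pi(x+(-1)^p y)\bigr)\,X_p
\end{align*}
show that in the oriented basis $(X_p^1,X_p^2)$ the vector $X_p$ appears as the rotation through angle $-2\pi(x+(-1)^p y)$ of $X_p^2$. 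Along $\beta_+^{a,b}$ we have $(x,y)=(at,bt)$ as $t$ runs from $0$ to $1$, so $x+(-1)^p y$ increases by $a+(-1)^p b$, and the tangent vector, which is a \emph{positive} multiple of $X_p$ (since $a/\cos\theta$ has a definite sign that I would track using the orientation of the contact plane fixed after~(\ref{eqn:frame0})), rotates by $a+(-1)^p b$ full turns. For $\beta_-^{a,b}$ the sign of the parameter and of the tangent are both flipped, giving the opposite winding. This yields $\rot(\beta_{\pm}^{a,b})=\pm\bigl(a+(-1)^p b\bigr)$.

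The main technical nuisance will be the bookkeeping of orientations: making sure that the positive frame $(\partial_z,X_p)$ of Section~\ref{section:cut} is compatible with the orientation of $\ker\alpha_p$ in which $(X_p^1,X_p^2)$ was declared positive, and that the push-off in the $+\partial_z$ direction yields the \emph{positive} contact framing. Once those sign conventions are pinned down at the level of the torus $T^2\times\{z\}$, the identification of the torus framing with $ab$ via the standard Heegaard picture of $S^3$ and the angle count for the rotation number are both routine.
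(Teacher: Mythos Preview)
Your proposal is correct and follows essentially the same route as the paper: for $\tb$ both arguments identify the contact framing with the torus framing (the paper pushes off along the torus, you push off along $\partial_z$, but either way one lands on a parallel $(a,b)$-curve with linking number $ab$), and for $\rot$ both count the winding of the tangent direction $X_p$ against the global frame $(X_p^1,X_p^2)$ of Lemma~\ref{lem:frame}. The only slip is a sign in your intermediate claim that $X_p$ is the rotation of $X_p^2$ through $-2\pi(x+(-1)^p y)$ (it is $+2\pi(x+(-1)^p y)$), but you already flag this orientation bookkeeping as the point to be careful about, and your final answer is correct.
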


\begin{proof}
As a push-off of $\beta_{\pm}^{a,b}$ in a direction transverse to
$\ker\alpha_p$ we can simply take a parallel copy on $T^2\times\{z\}$. The
Thurston--Bennequin invariant is the linking number of $\beta_{\pm}^{a,b}$
with this push-off, which equals~$ab$.

For the rotation number of $\beta_+^{a,b}$ we need to count the number
of rotations of $X_p$ (which is tangent to~$\beta_+^{a,b}$) relative
to the frame $X_p^1,X_p^2$ as we traverse $\beta_+^{a,b}$ once in
positive direction. Equivalently, we need to count negatively the
number of rotations of $X_p^1$ relative to the frame $\partial_z,X_p$.
Along $\beta_+^{a,b}$ we have, with Lemma~\ref{lem:frame},
\[ X_p^1\bigl(\beta_+^{a,b}(t)\bigr)
=\cos\bigl(2\pi(a+(-1)^pb)t\bigr)\,\partial_z-
\sin\bigl(2\pi(a+(-1)^pb)t\bigr)\,X_p\bigl(\beta_+^{a,b}(t)\bigr).\]
This yields the claimed result $\rot(\beta_{\pm}^{a,b})=
\pm\bigl(a+(-1)^pb\bigr)$.
\end{proof}

These examples were first described by Dymara~\cite{dyma04},
who computed their classical invariants from a generalised
front projection.
\subsection{Examples of Hopf links}
Given $p\in\N_0$ and $a\in\Z$, let $z_a\in (0,\frac{2}{2p+1})$
be the unique number such that
\[ -\tan\Bigl(\frac{2p+1}{2}\pi z_a\Bigr)=\frac{1}{a}.\]
If $p=0$ we assume in addition that $a<0$, so that $z_a\in (0,1)$
in this case, too.
The Legendrian knot $\beta_0:=\beta_+^{a,1}$, corresponding to
this choice of~$z$, i.e.\ $\beta_0(t)=(at,t,z_a)$, is isotopic
to~$C_0$.

Observe that the slope of the characteristic foliation
$\bigl(T^2\times\{z\}\bigr)_{\ker\alpha_p}$ decreases monotonically
from $0$ to $-\infty$ and then from $\infty$ to $0$ as $z$ goes
from $0$ to $\frac{2}{2p+1}$. Hence, any parallel copy
$\beta_{0,z}\co t\mapsto (at,t,z)$ of $\beta_0$ with $0<z<z_a$ is
positively transverse to~$\ker\alpha_p$, see Figure~\ref{figure:slopes}.
For $z=0$, this curve coincides with $C_0$ in~$S^3$. This means that any
such parallel curve constitutes the positive transverse push-off of $\beta_0$
and is transversely isotopic to~$C_0$.

\begin{figure}[h]
\labellist
\small\hair 2pt
\pinlabel $a=-3$ [t] at 109 2
\pinlabel $a=3$ [t] at 396 2
\endlabellist
\centering
\includegraphics[scale=.5]{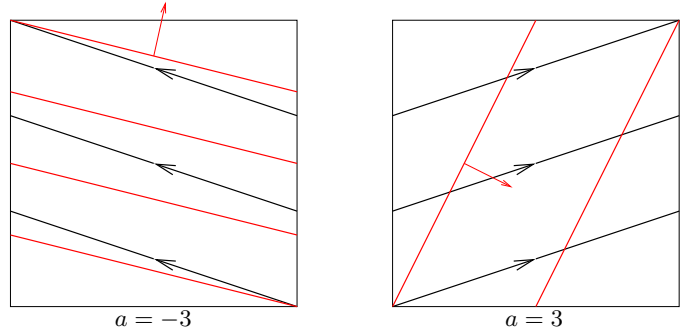}
  \caption{$\beta_{0,z}$ is positively transverse to $\ker\alpha_p$
           for $z<z_a$.}
  \label{figure:slopes}
\end{figure}

In a completely analogous fashion, given $b\in\Z$, let $z_b\in
(1-\frac{2}{2p+1},1)$ be the unique number such that
\[ -\tan\Bigl(\frac{2p+1}{2}\pi z_b\Bigr)=b.\]
Again, for $p=0$ we have to assume $b<0$.
The Legendrian knot $\beta_1:=\beta_+^{1,b}$ is isotopic to~$C_1$.
Any parallel copy $t\mapsto(t,bt,z)$ with $z_b<z<1$ is
positively resp.\ negatively transverse to $\ker\alpha_p$,
depending on $p$ being even or odd, and transversely
isotopic to~$C_1$.

\begin{figure}[h]
\labellist
\small\hair 2pt
\pinlabel $b/1$ [tl] at 264 405
\pinlabel $1/a$ [br] at 27 154
\pinlabel $x$ [t] at 425 214
\pinlabel $y$ [r] at 214 425
\endlabellist
\centering
\includegraphics[scale=.45]{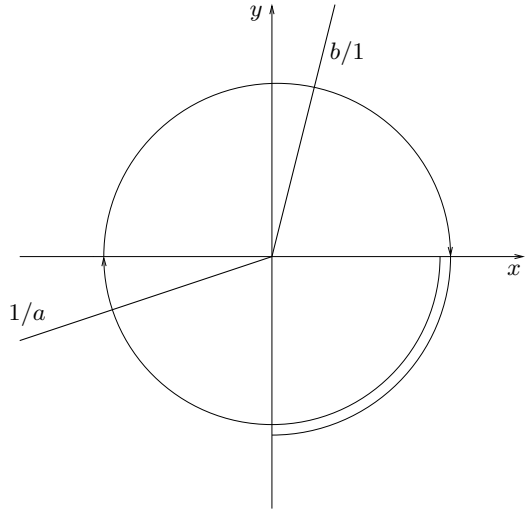}
  \caption{Realising $\beta_0\sqcup \beta_1$ as a Legendrian Hopf link
           in~$\ker\alpha_p$.}
  \label{figure:slopes-hopf}
\end{figure}

When $z_a<z_b$, the link $\beta_0\sqcup\beta_1$ constitutes
a positive Hopf link in $(S^3,\ker\alpha_p)$.
Figure~\ref{figure:slopes-hopf} illustrates that for
$a,b>0$ we need to have $p\geq 2$ to achieve $z_a<z_b$.
For $a,b<0$ we can take any $p\in\N_0$. In all other cases,
any $p\geq 1$ will do.

\begin{prop}
The Hopf link $\beta_0\sqcup\beta_1$ in $(S^3,\ker\alpha_p)$
is exceptional.
\end{prop}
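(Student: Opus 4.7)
The plan is to show that the link complement $(S^3\setminus(\beta_0\cup\beta_1),\ker\alpha_p)$ is tight, which gives exceptionality by the definition in the introduction. The ambient structure $\ker\alpha_p$ is overtwisted for $p\geq 1$ by Proposition~\ref{prop:cut}, and the case $p=0$ (where $\ker\alpha_0=\xist$) requires $a,b<0$, placing us in the tight setting where ``exceptional'' is vacuous but tightness of the link complement still follows from the same argument.

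The main ingredient is Remark~\ref{rem:tight}: $(T^2\times[0,1],\alpha_p)$ is tight. I would choose auxiliary values $z_0,z_1$ with $z_a<z_0<z_1<z_b$ and set $V_0=q(T^2\times[0,z_0])$, $V_1=q(T^2\times[z_1,1])$. These are closed solid tori in $S^3$ containing $\beta_0$ and $\beta_1$ in their interiors as Legendrian curves parallel to the cores $C_0,C_1$. The middle region $W=q(T^2\times[z_0,z_1])$ embeds contactomorphically into $(T^2\times[0,1],\alpha_p)$ via $q^{-1}$ (since $q$ is a contact diffeomorphism on the interior $T^2\times(0,1)$), so $W$ is tight.

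For the remaining pieces $V_i\setminus\beta_i$, I would use Lemma~\ref{lem:slk}: the circles $C_0,C_1$ are transverse unknots of self-linking $(-1)^{p+1}$, and for $z_0,z_1$ chosen close to $z_a,z_b$ each $V_i$ can be identified with a standard tight neighborhood of a transverse knot inside $(\R^3,\xist)$ by the standard neighborhood theorem for transverse knots. In particular $V_i\setminus\beta_i$ is tight, as a subset of a tight contact manifold.

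The link complement then decomposes as $W\cup(V_0\setminus\beta_0)\cup(V_1\setminus\beta_1)$, three tight pieces glued along convex tori. The main obstacle will be the gluing step, because combining tight contact structures along convex tori does not automatically preserve tightness. To close this gap I would appeal to the classification of tight contact structures on $T^2\times[0,1]$ from Propositions~\ref{prop:complement} and~\ref{prop:complement-twist}, identifying the thickened-torus link complement with the unique tight model whose boundary dividing-curve slopes match $1/\tb(\beta_0)$ and $\tb(\beta_1)$ and whose twisting agrees with the value~$p$ read off from the explicit rotation of the characteristic foliation of $\alpha_p$ on the tori $T^2\times\{z\}$ across the range $[z_0,z_1]$.
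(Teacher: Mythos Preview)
Your decomposition into tight pieces is correct as far as it goes: with $z_0,z_1$ in the ranges you indicate, each $V_i$ is a tight solid torus and $W$ is manifestly tight. But the gluing step is a genuine gap, and your proposed fix is circular. Propositions~\ref{prop:complement} and~\ref{prop:complement-twist} \emph{count} the tight contact structures on $T^2\times[0,1]$ with prescribed convex boundary data; they do not furnish a criterion for deciding whether a given contact structure is tight. Saying you would ``identify the thickened-torus link complement with the unique tight model'' presupposes that your structure \emph{is} one of the tight models, which is precisely the statement to be proved. Gluing tight pieces along tori does not in general produce a tight result, and nothing in your outline excludes the overtwisted alternative.

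The paper sidesteps gluing entirely via the transverse push-off. It was established just before the proposition that a parallel copy $\beta_{0,z}$ with $0<z<z_a$ is the positive transverse push-off of $\beta_0$ and is transversely isotopic to~$C_0$; likewise for $\beta_1$ and~$C_1$. If an overtwisted disc lay in $S^3\setminus(\beta_0\cup\beta_1)$, it would persist in the complement of transverse push-offs $\beta_0^T\sqcup\beta_1^T$ chosen sufficiently close to $\beta_0\sqcup\beta_1$. Since a transverse isotopy extends to an ambient contact isotopy by \cite[Theorem~2.6.12]{geig08}, the complement of $\beta_0^T\sqcup\beta_1^T$ is contactomorphic to the complement of $C_0\sqcup C_1$, namely the image of $T^2\times(0,1)$, which is tight by Remark~\ref{rem:tight}. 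That is the whole argument: one tightness input, no gluing.
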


\begin{proof}
Suppose there were an
overtwisted disc in $S^3\setminus(\beta_0\sqcup\beta_1)$.
This disc would persist in the complement of the
transverse push-off $\beta_0^T\sqcup\beta_1^T$ of $\beta_0\sqcup\beta_1$,
since $\beta_0^T\sqcup\beta_1^T$
can be chosen as close to $\beta_0\sqcup\beta_1$ as we like.
But $\beta_0^T\sqcup\beta_1^T$ is transversely
and hence, by \cite[Theorem~2.6.12]{geig08}, ambiently contact
isotopic to $C_0\sqcup C_1$, whose complement is tight
by Remark~\ref{rem:tight}.
\end{proof}

The case $a=b=0$ of this result was proved by
Dymara~\cite[Proposition~4.9]{dyma04}, using a more complicated argument.
Dymara also showed that $\beta_{\pm}^{a,b}$
in $(S^3,\ker\alpha_1)=(S^3,\xi_{1/2})$ is exceptional
for $ab>0$. We can give a very simple proof of this fact, similar
to the one above, if one of $a,b$ equals~$1$.

\begin{prop}
The unknots $\beta_{\pm}^{a,1}$ and $\beta_{\pm}^{1,b}$ with $a,b>0$
in $(S^3,\ker\alpha_1)$ are exceptional.
\end{prop}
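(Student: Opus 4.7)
The plan is to adapt the argument of the preceding proposition: produce a transverse push-off of the Legendrian unknot that is transversely isotopic to $C_0$ or $C_1$, and then invoke tightness of the complement of that transverse knot in $(S^3,\ker\alpha_1)$. Since exceptional\-ity is insensitive to orientation, it suffices to handle the $+$ versions, and I focus on $\beta:=\beta_+^{a,1}$ with $a>0$; the case $\beta_+^{1,b}$ is the mirror argument, with the roles of $x,y$ and of $C_0,C_1$ exchanged, as I indicate at the end.

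First I would exhibit the transverse isotopy to $C_0$. Evaluating $\alpha_1$ on the tangent $a\partial_x+\partial_y$ of the parallel curve $\gamma_z\co t\mapsto(at,t,z)$ gives
\[ a\sin\Bigl(\tfrac{3\pi}{2}z\Bigr)+\cos\Bigl(\tfrac{3\pi}{2}z\Bigr), \]
which equals $1$ at $z=0$, equals $a>0$ at $z=1/3$, and by the defining equation for $z_a$ has exactly one zero on $(0,2/3)$, namely at $z=z_a$. Hence each $\gamma_z$ with $z\in[0,z_a)$ is positively transverse to $\ker\alpha_1$, and $\{\gamma_z\}$ furnishes a transverse isotopy from the positive transverse push-off $\gamma_{z_a-\epsilon}$ of $\beta$ to $\gamma_0$; the $x$-circle collapse at $z=0$ identifies $\gamma_0$ with a single traverse of $C_0$. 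By the transverse neighborhood theorem (Theorem~2.6.12 of \cite{geig08}) this extends to an ambient contact isotopy $\phi$ of $(S^3,\ker\alpha_1)$ carrying the push-off onto $C_0$.

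With $\phi$ in hand, the contradiction is standard: were $S^3\setminus\beta$ overtwisted, choosing $\beta^T$ close enough to $\beta$ would keep the overtwisted disc in $S^3\setminus\beta^T$, and then $\phi$ would transport it to an overtwisted disc in $S^3\setminus C_0$. For $\beta_+^{1,b}$ the analogous computation shows that $\sin(3\pi z/2)+b\cos(3\pi z/2)$ is strictly negative on $(z_b,1]$, so one uses the \emph{negative} transverse push-off and the family $\{(t,bt,z)\}_{z\in(z_b,1]}$ to transversely isotope to $C_1$ at $z=1$, reducing that case to tightness of $S^3\setminus C_1$.

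The main obstacle is therefore the tightness of $S^3\setminus C_i$: Remark~\ref{rem:tight} supplies only tightness of $T^2\times[0,1]$, i.e.\ of $S^3\setminus(C_0\cup C_1)$. My preferred route is to present $S^3\setminus C_0$, via the cut construction, as the quotient of $T^2\times(0,1]$ by the $y$-collapse at $z=1$; this collapse simply produces $C_1$ as a transverse knot with $\slk(C_1)=1$ (Lemma~\ref{lem:slk}) inside its standard transverse tubular neighborhood, which inherits a tight contact structure from the surrounding tight $T^2\times[0,1]$. A back-up argument, should the transverse-neighborhood step prove technical, is a dimension count: any overtwisted disc $D\subset S^3\setminus C_0$ is a compact $2$-dimensional surface while $C_1$ is $1$-dimensional in the $3$-manifold~$S^3$, so after a generic perturbation $D$ can be assumed disjoint from $C_1$, placing it in the tight manifold $S^3\setminus(C_0\cup C_1)$ of Remark~\ref{rem:tight}.
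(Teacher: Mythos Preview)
There is a genuine gap: the complement $S^3\setminus C_0$ in $(S^3,\ker\alpha_1)$ is \emph{not} tight, so your reduction collapses. At $z=\tfrac{1}{3}$ one has $\alpha_1=\rmd x$, hence the $y$-circle $L=\{x_0\}\times S^1\times\{\tfrac{1}{3}\}$ is Legendrian; it bounds the meridional disc $D_L=\{x_0\}\times S^1\times[\tfrac{1}{3},1]/\!\sim$ of~$C_1$, which lies entirely in $S^3\setminus C_0$. The contact framing of $L$ is given by~$\partial_z$, which coincides with the surface framing from~$D_L$, so $\tb(L)=0$ and (after perturbing the characteristic foliation) $D_L$ is an overtwisted disc in $S^3\setminus C_0$. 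Thus route~(a) cannot work: the cut at $z=1$ glues a tight transverse tube around $C_1$ to the tight $T^2\times(0,1)$, but the union is not tight. Route~(b) rests on a wrong dimension count --- a $2$-disc and a $1$-manifold in a $3$-manifold meet generically in a $0$-manifold, not in the empty set --- and the obstruction is essential here: $\partial D_L$ is a meridian of~$C_1$, so every disc it bounds must intersect~$C_1$.

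The paper's proof sidesteps this by first passing to a stabilisation. An $(a-1)$-fold negative stabilisation of $\beta_+^{a,1}$ yields~$\beta_+^{1,1}$, so looseness of $\beta_+^{a,1}$ would force looseness of~$\beta_+^{1,1}$. For the $(1,1)$-curve one can push off on \emph{both} sides: parallel copies at $z_0<\tfrac12<z_1$ close to~$\tfrac12$ are transverse and, since both torus coordinates equal~$1$, the link they form is transversely isotopic to $C_0\sqcup C_1$. An overtwisted disc in the complement of $\beta_+^{1,1}$ would then persist into $S^3\setminus(C_0\cup C_1)$, which \emph{is} tight by Remark~\ref{rem:tight}. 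For $a>1$ the push-off of $\beta_+^{a,1}$ towards $z=1$ limits only to an $a$-fold cover of~$C_1$, which is why the preliminary stabilisation is needed.
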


\begin{proof}
As illustrated in \cite[Figure~6]{voge18}, an $(a-1)$-fold
negative stabilisation of $\beta_+^{a,1}$ yields
$\beta_+^{1,1}$; the other cases are analogous.

Suppose $\beta_+^{a,1}$ were loose. Then so would be its stabilisation
$\beta:=\beta_+^{1,1}$. The overtwisted disc would persist in the
complement of two parallel copies $\beta_{z_0}$, $\beta_{z_1}$ of
$\beta_+^{1,1}$, with $z_0<\frac{1}{2}<z_1$ close to~$\frac{1}{2}$.
But the link $\beta_{z_0}\sqcup\beta_{z_1}$ is transverse to
$\ker\alpha_1$, and transversely isotopic to $C_0\sqcup C_1$,
whose complement is tight.
\end{proof}
\subsection{Computing the twisting}
\label{subsection:comp-twist}
We now want to show that if we take the minimal $p$ in the above
construction, i.e.\ $p\in\{0,1,2\}$ depending on the values
of $a$ and~$b$, the complement of $\beta_0\sqcup\beta_1$ in $(S^3,\ker\alpha_p)$
will be minimally twisting.
We consider the case $a,b>0$ and $p=2$ illustrated in
Figure~\ref{figure:slopes-hopf}. The other cases are analogous.

First of all, we observe that we have a canonical identification
of $S^3\setminus(\beta_0\sqcup\beta_1)$ with $T^2\times (0,1)$
as in Section~\ref{section:link-complement}, given by identifying
the first $S^1$-factor of $T^2$ with a meridian of~$\beta_0$, and
the second $S^1$-factor with a meridian of~$\beta_1$. This gives
us a well-defined notion of twisting of a contact structure on this
link complement as a positive real number, not just an integer,
cf.\ the discussion in \cite[Section~2.2.1]{hond00I}.

The thickened torus $T^2\times [z_a+\varepsilon,z_b-\varepsilon]$,
where $\varepsilon>0$ is chosen sufficiently small, lies in the
complement of $\beta_0\sqcup\beta_1$. The twisting of $\ker\alpha_p$
on this thickened torus is greater than $\pi/2$, but smaller than~$\pi$.
(In particular, it is minimally twisting in he sense of~\cite{hond00I}.)

Arguing by contradiction, we assume that
$\bigl(S^3\setminus(\beta_0\sqcup\beta_1),\ker\alpha_2\bigr)$ is
not minimally twisting. Under the canonical identification of the
complement of a small open tubular neighbourhood of $\beta_0\sqcup\beta_1$
with $T^2\times[0,1]$, the boundary slope is close to $1/a$ at
$z=0$ and close to $b/1$ at $z=1$. Hence, if $T^2\times[0,1]$
is not minimally twisting, it will have twisting larger than
$3\pi/2$. Indeed, we find two embedded convex tori
(in the isotopy class of $T^2\times\{*\}$) of slope~$0$
and slope~$-\infty$, respectively, bounding a thickened torus of
twisting~$3\pi/2$, see~\cite[Corollary~4.8]{hond00I} and cf.\
Figure~\ref{figure:slopes-hopf}.

Again, this thickened torus of twisting~$3\pi/2$ would persist
in the complement of a tubular neighbourhood of $C_0\sqcup C_1$.
In the tubular neighbourhood of $C_0$ we find a convex torus
with slope $-1/n$ for $n\in\N$ sufficiently large, in the same
isotopy class as the other $2$-tori we are considering in $S^3\setminus
(C_0\sqcup C_1)$. Similarly, near $C_1$ we find such a convex torus with
slope~$-m$ for $m\in\N$ sufficiently large. So the slope would have to
decrease from $-1/n$ to~$0$ via~$-\infty$ and then, after twisting by
$3\pi/2$, from $\infty$ to~$-m$. This would add up to a twisting of more than
$5\pi/2$, which is impossible.
\subsection{Proof of Theorem~\ref{thm:main}~(e)}
\label{subsection:proofe}
We now provide details of the exceptional Legendrian Hopf links where
the contact structure on the link complement has positive twisting.
All claims follow from the preceding discussion.
Together with Proposition~\ref{prop:complement-twist}, which gives
an upper bound on the number of such realisations, this constitutes a
proof of Theorem~\ref{thm:main}~(e). The following three cases cover
all eventualities, possibly after exchanging the roles
of $L_0$ and $L_1$.
\subsubsection{$\ttt_0,\ttt_1>0$}
\label{subsubsection:++}
Here the $\pi$-twisting in the complement equals $p-2$.
For $p\geq 2$ even,
we have the two realisations with $(\tb,\rot)$ equal to
\[ \bigl(\ttt_0,\pm(\ttt_0+1)\bigr)\;\;\;\text{and}\;\;\;
\bigl(\ttt_1,\pm(\ttt_1+1)\bigr)\;\;\;\text{in $\xi_{-1/2}$.}\]
For $p\geq 3$ odd, we can realise
\[ \bigl(\ttt_0,\pm(\ttt_0-1)\bigr)\;\;\;\text{and}\;\;\;
\bigl(\ttt_1,\mp(\ttt_1-1)\bigr)\;\;\;\text{in $\xi_{1/2}$.}\]
For $\ttt_0=\ttt_1=1$, this is only a single realisation.
From Figure~\ref{figure:slopes-hopf} one sees that each individual
component $\beta_0,\beta_1$ is loose, since the contact planes
twist by more than $\pi$ in its complement. For instance, in the complement
of $\beta_1$ in $(S^3,\ker\alpha_2)$ we have the overtwisted disc
\[ \Bigl\{x\in[0,1],\, y=\frac{1}{2},\, 0\leq z\leq \frac{2}{5}\Bigr\}
/\!\sim.\]

The case $p=2$ coincides with the corresponding subcases in the
classification of strongly exceptional Hopf links.
\subsubsection{$\ttt_0,\ttt_1<0$}
The invariants in this and the third case below are the same as
in the first case. The only difference is that
here the $\pi$-twisting in the complement equals~$p$, and one may consider
any $p\geq 0$. For $p=0$, one obtains a realisation in the
tight~$\xist$; for $p\geq 1$ the two link components
are loose. For $p\geq 0$ even and $\ttt_0=\ttt_1=-1$, there
is only one realisation.
\subsubsection{$\ttt_0\leq 0$, $\ttt_1\geq 0$}
Here the twisting is $p-1$, and one may consider any $p\geq 1$.
The components $\beta_0,\beta_1$ are loose, except for the
unknot $\beta_1$ in the case $p=1$ and~$\ttt_1\geq 1$,
which is exceptional by~\cite[Proposition~5.5]{dyma04}.
(Beware that Dymara's notational conventions differ from ours.)

The latter case coincides with the corresponding
examples in Section~\ref{section:exceptional}, as one can see
by looking at the invariants in Table~\ref{table:se-invariants}.
For $\ttt_0=0$ we are in case~(d). For $\ttt_0<0$ and $\ttt_1=0$
we are in case (d) with the roles of $L_0$ and $L_1$ exchanged.
For $\ttt_0<0$ and $\ttt_1=1$ we obtain examples from case~(b2).
For $\ttt_0<0$ and $\ttt_1\geq 2$, one finds the same examples in
case~(b1): when $n$ is even, take $k=-\ttt_0$ and $\ell=0$;
when $n$ is odd, choose $\ell=-\ttt_0$ and $k=0$.
\section{Loose Hopf links}
\label{section:loose}
In this section we prove part (f)
of Theorem~\ref{thm:main}. As shown in~\cite{etny13}, the classification
of loose Legendrian knots in any contact $3$-manifold reduces
to a homotopical question, see also~\cite{geig10}. However,
rather than relying on this general theory, which would have to
be adapted to links, we shall give an \emph{ad hoc}
proof that relies on the topology of the link
complement in our situation.
\subsection{Finding a Legendrian realisation}
We begin by showing that in any given overtwisted $(S^3,\xi_d)$
one can find a loose Legendrian Hopf link realising any
combination of values $(\ttt_0,\ttr_0,\ttt_1,\ttr_1)$
for the classical invariants, subject only to the condition
that $\ttt_i+\ttr_i$ be odd, $i=0,1$.

Start with any topological Hopf link in $(S^3,\xi_d)$ with overtwisted
complement. Construct a loose Legendrian Hopf link as a
$C^0$-approximation of this topological Hopf link;
such an approximation exists by~\cite[Theorem~3.3.1]{geig08}.
Also, the components of this Legendrian Hopf link will
satisfy the parity condition on $\tb+\rot$, since this
is a general phenomenon in contact $3$-manifolds, see
\cite[Remark~4.6.35]{geig08}.

By stabilising the components of this Hopf link, one can increase
or decrease the rotation number in steps of~$1$, while
decreasing the Thurston--Bennequin invariant by~$1$ with
each stabilisation. This is a local process and preserves
the overtwisted disc in the complement. In this way, we can find
a loose Legendrian realisation which has the correct values $\ttr_0,\ttr_1$
of rotation numbers.

We can always decrease the value of $\tb$ of a Legendrian knot
in steps of~$2$, while keeping $\rot$ unchanged, by adding further
pairs of positive and negative stabilisations.

In an overtwisted contact $3$-manifold one can find a loose
Legendrian unknot $K_{1,0}$ with $\tb(K_{1,0})=1$ and
$\rot(K_{1,0})=0$, by forming the connected sum of
two Legendrian knots bounding an overtwisted disc
(with opposite orientations), see~\cite[p.~317]{geig08}.

The connected sum of a Legendrian knot $L$ with $K_{1,0}$ can be performed
in such a way that the topological knot type and $\rot(L)$ remain unchanged,
and such that $\tb(L)$ increases by~$2$; again, see~\cite[p.~317]{geig08}.
In this way, we can adjust
our Legendrian Hopf link so that it also has the correct values
$\ttt_0,\ttt_1$ of Thurston--Bennequin invariants (and remains loose).
\subsection{Uniqueness of the Legendrian realisation}
Let $L_0\sqcup L_1$ and $L_0'\sqcup L_1'$ be two loose
Legendrian realisations of the Hopf link in some overtwisted
$(S^3,\xi_d)$ with $\tb(L_i)=\tb(L_i')$ and $\rot(L_i)=\rot(L_i')$,
$i=0,1$.

The Thurston--Bennequin invariant measures the twisting of the
contact framing relative to the surface framing. These invariants
being equal for $L_i$ and $L_i'$ implies, by the proof
of the Legendrian neighbourhood theorem \cite[Theorem~2.5.8]{geig08},
that we can find a topological diffeotopy $\phi_t$ of $S^3$
starting at $\phi_0=\mathrm{id}_{S^3}$ and ending at a diffeomorphism
$\phi_1$ of $S^3$ that sends an open tubular neighbourhood
$U$ of $L_0\sqcup L_1$ \emph{contactomorphically} to
a neighbourhood of $L_0'\sqcup L_1'$.

We need to show that the contact structures $\xi_d$ and $\phi_{1*}^{-1}\xi_d$
on $S^3\setminus U\cong T^2\times[0,1]$ are diffeomorphic rel boundary
(perhaps for a slightly `thinner'~$U$). By Eliashberg's
classification \cite{elia89} of overtwisted contact structures,
see also \cite[Section~4.7]{geig08}
and the work of Borman--Eliashberg--Murphy~\cite{bem15},
it suffices to show that $\xi_d$ and $\phi_{1*}^{-1}\xi_d$
on $T^2\times[0,1]$ are homotopic rel boundary as tangent $2$-plane fields.

The identification of $S^3\setminus U$ with $T^2\times[0,1]$ is chosen as in
Sections \ref{section:link-complement} and~\ref{subsection:comp-twist}.
That is, the first $S^1$-factor of $T^2$ defines a meridian of~$L_0$,
the second factor a meridian of~$L_1$. As before, we write the
coordinates on $T^2\times[0,1]$ as $x,y\in\R/\Z$ and $z\in[0,1]$.

As an abstract $2$-plane bundle over~$S^3$, the contact structure
$\xi_d$ is trivial, since its Euler class is zero. This allows
us to choose an orthonormal frame $X_1,X_2,X_3$ for the
tangent bundle of~$S^3$, with respect to an auxiliary
Riemannian metric, with $X_1,X_2$ a positive frame for~$\xi_d$,
and $X_3$ positively orthonormal to it.
Likewise, we choose such an orthonormal frame $X_1',X_2',X_3'$
for $\phi_{1*}^{-1}\xi_d$. By writing the vectors in this second frame
in terms of $X_1,X_2,X_3$ we define a map $\Phi\co S^3\rightarrow
\SO(3)$.

We first look at a closed tubular neighbourhood $\nu L_0$ of~$L_0$,
contained in~$U$. This may be chosen as a standard neighbourhood
with convex boundary. In particular, we find a Legendrian longitude
$L_0^{\parallel}$ on the boundary $\partial(\nu L_0)$ that is Legendrian
isotopic to~$L_0$. This can easily be seen in a model
\[ \bigl(S^1\times D^2,\ker(\cos\theta\,\rmd x_1-\sin\theta\,\rmd x_2)\bigr)\]
for~$\nu L_0$, where the longitude $L_0^{\parallel}$ in question is defined by
\[ \theta\longmapsto (\cos\theta,-\sin\theta,\theta).\]
The linking number of $L_0^{\parallel}$ with $L_0$ in $S^3$
equals~$\tb(L_0)$.

On $\nu L_0$, the contact structures $\xi_d$ and $\phi_{1*}^{-1}\xi_d$
coincide, hence so do $X_3$ and~$X_3'$. Along $L_0^{\parallel}$ we can
count the rotations $\ttr,\ttr'$ of the tangent direction
of~$L_0^{\parallel}$ relative to the frames $X_1,X_2$ and $X_1',X_2'$.
These are the rotation numbers of $L_0^{\parallel}$
(or its Legendrian isotopic copy~$L_0$) with respect to
$\xi_d$ and $\phi_{1*}^{-1}\xi_d$, respectively.
From the assumption that $\rot(L_0)=\rot(L_0')$ we conclude
$\ttr=\ttr'$. Thus, after a homotopy of $X_1',X_2'$ we may
assume that this frame coincides with $X_1,X_2$ along~$L_0^{\parallel}$.

Both $X_1,X_2$ and $X_1',X_2'$ define a frame of $\xi_d$
over each meridional disc in $\nu L_0$. Therefore, their
relative twisting along any meridian is zero, and after a further
homotopy of $X_1',X_2'$ we may assume that this frame coincides
with $X_1,X_2$ over all of $\nu L_0$. The same argument can be
applied to~$\nu L_1$.

The map $\Phi$ comparing the two frames $X_1,X_2,X_3$ and
$X_1',X_2',X_3'$ then restricts to a map
\[ \Phi\co\bigl(T^2\times[0,1],T^2\times\{0,1\}\bigr)\longrightarrow
\bigl(\SO(3),\mathrm{id}_{\R^3}\bigr).\]

\begin{lem}
Possibly after a modification of $X_1',X_2'$ near the boundary
$T^2\times\{0,1\}$ that fixes the contact structure
$\phi_{1*}^{-1}\xi_d$, the map $\Phi$ is homotopic
rel boundary to a map that equals the constant map to $\mathrm{id}_{\R^3}$
outside an open $3$-ball in $T^2\times[0,1]$.
\end{lem}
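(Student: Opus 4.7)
The plan is to use obstruction theory for maps $(T^2\times[0,1],\partial)\to(\SO(3),\mathrm{id})$ up to homotopy rel boundary. Since $\pi_2(\SO(3))=0$, the only potentially non-trivial obstructions to null-homotopy rel boundary are the primary one in $H^1(T^2\times[0,1],\partial;\Z/2)$ (coming from $\pi_1(\SO(3))=\Z/2$) and the tertiary data in $\pi_3(\SO(3))=\Z$ attached to a $3$-cell. By Lefschetz duality the relative cohomology group equals $H_2(T^2\times[0,1];\Z/2)\cong\Z/2$.

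Once the primary obstruction is killed, I can homotope $\Phi$ rel boundary so that it equals $\mathrm{id}$ on the entire $2$-skeleton of a CW decomposition (relative to $\partial$) having a single $3$-cell. The remaining data, on this $3$-cell, is a map $D^3\to\SO(3)$ carrying $\partial D^3$ to $\mathrm{id}$, i.e.\ a class in $\pi_3(\SO(3))$. Any such class is represented, up to homotopy rel boundary, by a smooth map equal to $\mathrm{id}$ outside a prescribed open sub-ball of $D^3$; this gives the desired normal form. So it suffices to kill the $\Z/2$-obstruction using the allowed boundary modification.

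For the crux, I would evaluate the primary obstruction on a generating arc $\gamma(z)=(x_0,y_0,z)$ from $T^2\times\{0\}$ to $T^2\times\{1\}$: the obstruction class is the homotopy class of $\Phi\circ\gamma$ in $\pi_1(\SO(3))$. Rotating $X_1',X_2'$ within the contact plane $\phi_{1*}^{-1}\xi_d$ in a collar $T^2\times[0,\varepsilon]$ by the angle $2\pi nz/\varepsilon$ defines a legitimate modification: it is the identity at $z=0$ (preserving the boundary condition $\Phi|_\partial=\mathrm{id}$), it equals a full $n$-fold rotation at $z=\varepsilon$ and so matches smoothly with the unchanged $X_1',X_2'$ past the collar, and it fixes the contact structure throughout. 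Under this modification, $\Phi\circ\gamma$ is altered by concatenation with the loop $s\mapsto R_{2\pi ns}$ in $\SO(2)\subset\SO(3)$, whose class in $\pi_1(\SO(3))$ is the mod-$2$ reduction of $n$. Taking $n=1$ when necessary kills the obstruction.

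The main obstacle is the precise identification of the primary obstruction in the geometric form just used, and the verification that rotating the frame in the contact plane induces exactly the claimed change on the $\pi_1$-class of $\Phi\circ\gamma$. Both reduce to bookkeeping with the covering $\mathrm{Spin}(3)\to\SO(3)$ and the inclusion $\SO(2)\hookrightarrow\SO(3)$, the latter inducing the mod-$2$ reduction $\pi_1(\SO(2))=\Z\to\Z/2=\pi_1(\SO(3))$.
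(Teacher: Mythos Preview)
Your proof is correct and follows essentially the same approach as the paper: both evaluate the $\pi_1(\SO(3))=\Z/2$ obstruction on the arc $z\mapsto(0,0,z)$, kill it by rotating the frame $X_1',X_2'$ within the contact plane near the boundary, and then use $\pi_2(\SO(3))=0$ to trivialise $\Phi$ on the $2$-skeleton, leaving only the $\pi_3$ data concentrated in a single $3$-cell. Your obstruction-theoretic packaging is a mild abstraction of the paper's hands-on argument (arc, then the two cylinders $\{x=0\}$ and $\{y=0\}$), and your collar modification is in fact a bit simpler than the paper's, which twists radially in $\nu L_0$ and then undoes the twist in a collar of $T^2\times\{0\}$.
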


\begin{proof}
The restriction of $\Phi$ to the path $t\mapsto (0,0,t)$, $t\in [0,1]$,
in $T^2\times[0,1]$ defines a loop in $\SO(3)$ based at $\mathrm{id}_{\R^3}$.
Since $\pi_1(\SO(3))=\Z_2$, this may or may not be trivial.
If it is not, we add a full twist to the frame $X_1',X_2'$ relative
to $X_1,X_2$ as we go out in radial direction in $\nu L_0$,
and we undo this twist in $z$-direction in a neighbourhood
of $T^2\times\{0\}$ inside $T^2\times [0,1]$ where $\xi_d$
and $\phi_{1*}^{-1}$ coincide.

After this modification (if necessary) and a homotopy of $X_1',X_2',X_3'$
rel boundary, we may assume that  the two frames coincide along
the described path.

The restriction of $\Phi$ to the cylinder $\bigl\{x=0,\, y\in S^1,\,
z\in [0,1]\bigr\}$, sliced open along the path we just discussed,
defines an element of $\pi_2\bigl(\SO(3)\bigr)=0$. Thus, after yet
another homotopy rel boundary  we may assume that $\Phi=\mathrm{id}_{\R^3}$
also on this cylinder. The same argument applies to the
cylinder $\bigl\{x\in S^1,\, y=0,\, z\in [0,1]\bigr\}$.

We have now achieved that $\Phi$ equals the constant map
to $\mathrm{id}_{\R^3}$ outside the open $3$-ball
\[ \Int(D^3)=(0,1)^3\subset T^2\times[0,1]\subset S^3.\]
In particular, this means that we have homotoped
$\phi_{1*}^{-1}\xi_d$ to a tangent $2$-plane field on $S^3$
that coincides with $\xi_d$ outside $\Int(D^3)$.
\end{proof}

Over $S^3$, the two tangent $2$-plane fields $\xi_d$
and $\phi_{1*}^{-1}\xi_d$ are homotopic, detected by their Hopf invariants
being equal. This means that the obstruction to homotopy
of the two tangent $2$-plane fields over the closed
$3$-ball $D^3$ rel $\partial D^3$ vanishes.
\section{Some computations}
\label{section:compute}
In this section we collect the computations of the classical
invariants for some of the examples in Section~\ref{subsection:Leg-realise}.
The surgery knots in the diagrams are
ordered from top to bottom and
oriented clockwise for the purpose of computing linking and
rotation numbers. The results of all computations are collected
in Table~\ref{table:se-invariants}.

{\small
\begin{sidewaystable}
\vspace*{13cm}
\centering
{\renewcommand{\arraystretch}{1.2}
\begin{tabular}{|c|c|c|c|c|c|c|c|c|}  \hline
Case & Figure               &
$\ttt_0$        & $\ttr_0$      & $\ttt_1$    & $\ttr_1$         &
  type        & $d_3$  & no.\ of             \\
     &                      &
                &               &             &                  &
              &        & realisations\\ \hline\hline
b1   & \ref{figure:b1}      &
$-(k+\ell)<0$   & $\pm(\ell-k-(-1)^n)$ & $n+2\geq 2$ & $\mp(-1)^n(n+1)$ &
  loose/exc.  & $1/2$  & $2|\ttt_0-1|$ \\ \hline
b2   & \ref{figure:b2}      &
$<0$            & $\ttt_0-1,\ttt_0+1,\ldots,-\ttt_0-1,-\ttt_0+1$
                                & 1           & 0                &
  loose/exc.  & $1/2$  & $|\ttt_0-2|$  \\ \hline
c1   & \ref{figure:b2}      &
1               & 0                    & 1           & 0                &
  exc./exc.   & $1/2$  & 1             \\ \hline
c2   & Section~\ref{subsubsection:++} &
2               & $\pm 3$              & 1           & $\pm 2$          &
  loose/loose & $-1/2$ & 2             \\ \cline{2-9}
     & \ref{figure:c2-3-1}  &
3               & $\pm 4$              & 1           & $\pm 2$          &
  loose/loose & $-1/2$ & 2             \\ \cline{3-9}
     &                      &
3               & 0                    & 1           & 0                &
  loose/loose & $3/2$  & 1             \\ \cline{2-9}
     & \ref{figure:c2-2-2}  &
2               & $\pm 3$              & 2           & $\pm 3$          &
  loose/loose & $-1/2$ & 2             \\ \cline{3-9}
     &                      &
2               & $\pm 1$              & 2           & $\pm 1$          &
  loose/loose & $3/2$  & 2             \\ \hline
c3   & \ref{figure:c3-t0-1} &
$\geq 4$        & $\pm(\ttt_0+1)$      & 1           & $\pm 2$          &
  loose/loose & $-1/2$ & 2             \\ \cline{3-9}
     &                      &
$\geq 4$        & $\pm (\ttt_0-3)$     & 1           & 0                &
  loose/loose & $3/2$  & 2             \\ \cline{2-9}
     & \ref{figure:c3-t0-2} &
$\geq 3$        & $\pm(\ttt_0+1)$      & 2           & $\pm 3$          &
  loose/loose & $-1/2$ & 2             \\ \cline{3-9}
     &                      &
$\geq 3$        & $\pm(\ttt_0-1)$      & 2           & $\pm 1$          &
  loose/loose & $3/2$  & 2             \\ \cline{3-9}
     &                      &
$\geq 3$        & $\pm(\ttt_0-3)$      & 2           & $\mp 1$          &
  loose/loose & $3/2$  & 2             \\ \hline
c4   & \ref{figure:c4}      &
$\geq 3$        & $\pm(\ttt_0+1)$      & $\geq 3$    & $\pm(\ttt_1+1)$  &
  loose/loose & $-1/2$ & 2             \\ \cline{3-9}
     &                      &
$\geq 3$        & $\pm(\ttt_0-1)$      & $\geq 3$    & $\pm(\ttt_1-1)$  &
  loose/loose & $3/2$ & 2              \\ \cline{3-9}
     &                      &
$\geq 3$        & $\pm(\ttt_0-3)$      & $\geq 3$    & $\mp(\ttt_1-1)$  &
  loose/loose & $3/2$ & 2              \\ \cline{3-9}
     &                      &
$\geq 3$        & $\pm(\ttt_0-1)$      & $\geq 3$    & $\mp(\ttt_1-3)$  &
  loose/loose & $3/2$ & 2              \\ \hline
d    & \ref{figure:b1}, \ref{figure:b2} &
0               & $\pm 1$              & $\geq 1$   & $\pm(\ttt_1-1)$   &
  loose/exc.  & $1/2$  & 2             \\ \cline{2-9}
     & \ref{figure:d}       &
0               & $\pm 1$              & $\leq 0$   & $\pm(\ttt_1-1)$      &
  loose/loose & $1/2$  & 2             \\ \hline
\end{tabular}
}
\vspace{1.5mm}
\caption{The complete list of strongly exceptional realisations.}
\label{table:se-invariants}
\end{sidewaystable}
} 

\subsection{Figure~\ref{figure:c2-3-1}}
The linking matrix is
\[ \left(\begin{array}{cccc}
-2 & -1 & -1 & -1 \\
-1 & 0  & -1 & -1\\
-1 & -1 & 0  & -1\\
-1 & -1 & -1 & 0
\end{array}\right).\]
The extended linking matrices for $L_0$ and $L_1$ are
\[ M_0=\left(\begin{array}{c|cccc}
0 & 3 & 1 & 1 & 1 \\ \hline
3 &   &   &   &    \\
1 &   & M &   &    \\
1 &   &   &   &    \\
1 &   &   &   & 
\end{array}\right)
\;\;\;\text{and}\;\;\;
M_1=\left(\begin{array}{c|cccc}
0  & -1 & -1 & -1 & -1 \\ \hline
-1 &    &    &    &    \\
-1 &    & M  &    &    \\
-1 &    &    &    &    \\
-1 &    &    &    &
\end{array}\right),\]
respectively. The determinants are
\[ \det M=1,\;\;\; \det M_0=6,\;\;\;\text{and}\;\;\;
\det M_1=2.\]
This gives $\tb(L_0)=3$ and $\tb(L_1)=1$.

For the diagram on the left of Figure~\ref{figure:c2-3-1} we have
\[ \rot(L_0)=\rot_0-\left\langle\begin{pmatrix}2\\0\\0\\0\end{pmatrix},
M^{-1}\begin{pmatrix}3\\1\\1\\1\end{pmatrix}\right\rangle=
-2-\left\langle\begin{pmatrix}2\\0\\0\\0\end{pmatrix},
\begin{pmatrix}-3\\1\\1\\1\end{pmatrix}\right\rangle=4\]
and
\[ \rot(L_1)=\rot_1-\left\langle\begin{pmatrix}2\\0\\0\\0\end{pmatrix},
M^{-1}\begin{pmatrix}-1\\-1\\-1\\-1\end{pmatrix}\right\rangle=
0-\left\langle\begin{pmatrix}2\\0\\0\\0\end{pmatrix},
\begin{pmatrix}-1\\1\\1\\1\end{pmatrix}\right\rangle=2.\]
For the diagram on the right of Figure~\ref{figure:c2-3-1},
the vector $\vrot$ of rotation numbers is the zero vector, so the computation
simplifies and yields $\rot(L_0)=\rot(L_1)=0$.

The signature of $M$ is zero. This can be seen from the Kirby diagram
in Figure~\ref{figure:c2-3-1kirby}. Simply count how many $(\pm 1)$-framed
unknots are left after all blow-downs, minus those introduced to replace
the twisting boxes. There are four $2$-handles, hence $\chi=5$.
For the diagram on the left, the solution $\bfx$ of $M\bfx=\vrot$ is
$\bfx=(-4,2,2,2)^{\ttt}$; on the right, $\bfx$ is the zero vector.
This gives $c^2=-8$ in the first and $c^2=0$ in the second case,
resulting in the values of $d_3$ listed in Table~\ref{table:se-invariants}.
The contact structure described by the surgery diagram on the left is the
overtwisted structure $\xi_{-1/2}$ (rather than $\xist$, which also
has $d_3=-\frac{1}{2}$), since $L_0$ and $L_1$ violate the
Bennequin inequality \cite[Theorem~4.6.36]{geig08} for Legendrian knots
in tight contact $3$-manifolds.

The Legendrian Hopf link $L_0\sqcup L_1$ is exceptional, since
contact $(-1)$-surgery along $L_0$ and contact $(-1)$-surgery along
$L_1$ and two Legendrian push-offs of $L_1$ leads, by the cancellation
lemma, to the empty diagram, that is, $(S^3,\xist)$. The individual
components $L_0,L_1$ are loose, however, since there are no
exceptional unknots in $(S^3,\xi_{-1/2})$ or $(S^3,\xi_{3/2})$.
The same arguments apply to the examples in Figure~\ref{figure:c2-2-2}.
\subsection{Figure~\ref{figure:c3-t0-1}}
Here we only collect the data the reader needs to verify
our claim about the invariants in this case listed in
Table~\ref{table:se-invariants}. As linking matrix $M$ we take the one
given by ordering the surgery curves from top to bottom, all oriented
clockwise. The signature of this matrix is $\sigma=-n-1$.
This can be read off from the Kirby moves as explained in the previous
example. The Euler characteristic is $\chi=n+6$.

Rather than computing the Thurston--Bennequin invariants of $L_0$ and $L_1$
from the extended linking matrices, one can simply keep control over
the contact framing during the Kirby moves described in
Figures \ref{figure:c3-t0-1kirby} and~\ref{figure:b1kirby}.
We find $\tb(L_0)=n+4$ and $\tb(L_1)=1$.

For the figure on the left, the vector $\bfx$ with $M\bfx=\vrot$ is
\[ \bfx=\bigl(-(n+1),n,-(n-1),\ldots,-1,0,0,0,0\bigr)^{\ttt}\;\;\;
\text{for $n\geq 0$ even}\]
and
\[ \bfx=\bigl(-(n+5),n+4,-(n+3),\ldots,5,-4,2,2,2\bigr)^{\ttt}\;\;\;
\text{for $n\geq 1$ odd.}\]
This gives $d_3=\frac{3}{2}$ for $n$ even, and $d_3=-\frac{1}{2}$
for $n$ odd.

For the figure on the right, we have
\[ \bfx=\bigl(n+5,-(n+4),\ldots,5,-4,2,2,2\bigr)^{\ttt}\;\;\;
\text{for $n$ even}\]
and
\[ \bfx=\bigl(n+1,-n,n-1,\ldots,-1,0,0,0,0\bigr)^{\ttt}\;\;\;
\text{for $n$ odd,}\]
leading to $d_3=-\frac{1}{2}$ for $n$ even and $d_3=\frac{3}{2}$
for $n$ odd.

For the clockwise orientation of $L_0$ and $L_1$ (beware that for $n$
odd this gives a negative Hopf link) we have
\[ \vlk_0=(-2,-1,0,\ldots,0)^{\ttt}\]
and
\[ \vlk_1=(1,\underbrace{0,\ldots,0}_n,-1,-1,-1,-1)^{\ttt}\]
for the vector of linking numbers of $L_0$ and $L_1$, respectively,
with the surgery curves. This yields
\[ M^{-1}\vlk_0=\bigl(n+4,-(n+2),n+1,\ldots,(-1)^n3,(-1)^{n+1}2,
(-1)^n,(-1)^n,(-1)^n\bigr)^{\ttt}\]
and
\[ M^{-1}\vlk_1=\bigl((-1)^n,(-1)^{n-1},(-1)^{n-2},\ldots,-1,1,1,1
\bigr)^{\ttt}.\]
The remaining calculations are then straightforward.

The arguments for the looseness of the components and the exceptionality
of the link are as in the previous case.
\subsection{Figure~\ref{figure:d}}
The linking matrix is the $\bigl((n+1)\times(n+1)\bigr)$-matrix
\[ M=M^{(n)}=\begin{pmatrix}
-1     & -1      & -1     & \cdots & -1\\
-1     & 0       & -1     & \cdots & -1\\
-1     & -1      & \ddots & \cdots & -1\\
\vdots &  \vdots &        & \cdots & \vdots \\
-1     & -1      & -1     & \cdots & 0
\end{pmatrix} \]
The extended linking matrices are
\[ M^{(n)}_0=\left(\begin{array}{c|ccc}
0      & -1 & \cdots   & -1\\ \hline
-1     &    &          &   \\
\vdots &    & M^{(n)} &   \\
-1     &    &          &
\end{array}\right) \]
and
\[ M^{(n)}_1=\left(\begin{array}{c|cccc}
0      & -2 & -1 & \cdots   & -1\\ \hline
-2     &    &    &          &   \\
-1     &    &    &          &   \\
\vdots &    &    & M^{(n)}  &   \\
-1     &    &    &          &
\end{array}\right). \]
One easily computes $\det M^{(n)}=-1$, $\det M^{(n)}_0=-1$,
and $\det M^{(n)}_1=n-4$. With formula~(\ref{eqn:tb})
this yields $\tb(L_0)=0$ and $\tb(L_1)=2-n$.

The Euler characteristic is $\chi=n+2$, and
from the Kirby moves in Figure~\ref{figure:dkirby} we read off
$\sigma=n-1$. The solution of $M^{(n)}\bfx=\vrot=(1,0,\ldots,0)^{\ttt}$ is
\[ \bfx=(n-1,-1,\ldots,-1)^{\ttt},\]
hence $c^2=\langle\bfx,\vrot\rangle=n-1$.
With (\ref{eqn:d3surgery}) this gives $d_3=\frac{1}{2}$.

For the rotation number we observe that with
\[ \vlk_0=(-1,\ldots,-1)^{\ttt}\]
and
\[ \vlk_1=(-2,-1,\ldots,-1)^{\ttt}\]
we have
\[ M^{-1}\vlk_0=(1,0,\ldots,0)^{\ttt}\]
and
\[ M^{-1}\vlk_1=(2-n,1,\ldots,1)^{\ttt}.\]
With (\ref{eqn:rot}) we obtain $\rot(L_0)=-1$ and
$\rot(L_1)=n-1$.

\end{document}